\documentclass[11pt]{amsart}
\usepackage[T1]{fontenc}
\usepackage{lmodern}
\usepackage{amsmath,amssymb,amsthm,mathtools,mathrsfs}
\usepackage{microtype}
\usepackage[hidelinks]{hyperref}
\newtheorem{theorem}{Theorem}[section]
\newtheorem{lemma}[theorem]{Lemma}
\newtheorem{proposition}[theorem]{Proposition}
\newtheorem{corollary}[theorem]{Corollary}
\theoremstyle{definition}

\theoremstyle{remark}
\newtheorem{remark}[theorem]{Remark}
\numberwithin{equation}{section}
\newcommand{\R}{\mathbb R}
\newcommand{\HH}{\mathcal H}

\DeclareMathOperator{\co}{co}

\DeclareMathOperator{\relint}{relint}

\DeclareMathOperator{\diam}{diam}

\DeclareMathOperator{\bary}{bar}
\DeclareMathOperator{\ext}{ext}
\DeclareMathOperator{\tr}{tr}

\title[Transport to non-convex polyhedral domains]{Optimal transport to non-convex polyhedral domains}

\author{Shibing Chen}
\address{}
\email{chenshibing1982@hotmail.com}

\author{Qi-Rui Li}
\address{School of Mathematical Sciences, Zhejiang University, Hangzhou 310058, China}
\email{qi-rui.li@zju.edu.cn}

\author{Yuanyuan Li}
\address{Institute for Theoretical Sciences, Westlake University, Hangzhou, 310030, China}
\email{lyyuan@westlake.edu.cn}

\author{Jiakun Liu}
\address{School of Mathematics and Statistics, The University of Sydney, Camperdown, NSW 2006, Australia}
\email{jiakun.liu@sydney.edu.au}

\author{Xiaotian Wu}
\address{Institute for Theoretical Sciences, Westlake University, Hangzhou, 310030, China}
\email{wuxiao-tian@westlake.edu.cn}

\date{\today}
\begin{document}
\begin{abstract}
We study optimal transport with a non-convex polyhedral target in $\R^n$. The closure of the singular set is locally a smooth hypersurface outside a relatively closed set of Hausdorff dimension at most $n-2$. For partial transport between domains separated by a hyperplane, the source free boundary has the same regularity.
\end{abstract}
\maketitle
\baselineskip=16.4pt
\parskip=3pt

\section{Introduction}

In this paper we study optimal transport and optimal partial transport with constant densities when the target is a non-convex polyhedral domain in $\R^n$. Let $\Omega,\Omega^*\subset\R^n$ be bounded open sets, $n\ge2$, and let $0<m\le\min\{|\Omega|,|\Omega^*|\}$. A transport plan of mass $m$ is a nonnegative Borel measure $\gamma$ such that
\[
\gamma(\R^n\times\R^n)=m,\qquad
(\pi_1)_\#\gamma\le\chi_\Omega\,dx,\qquad
(\pi_2)_\#\gamma\le\chi_{\Omega^*}\,dy.
\]
An optimal plan minimizes
\begin{equation*}
\int_{\R^n\times\R^n}\tfrac12|x-y|^2\,d\gamma(x,y).
\end{equation*}
When $m=|\Omega|=|\Omega^*|$, by Brenier's theorem~\cite{Br1} there exists a convex function $u$ satisfying
\begin{equation}\label{transport}
(Du)_\#\chi_\Omega=\chi_{\Omega^*}.
\end{equation}
We write
\begin{equation*}
A=\{x\in\Omega:u\text{ is not differentiable at }x\},
\qquad \Sigma=\overline A\cap\Omega.
\end{equation*}
In particular, $\Sigma$ may contain differentiability points of $u$.

The regularity theory for convex targets was developed by Caffarelli~\cite{C92,C96}. Classical boundary regularity for smooth uniformly convex domains was established by Delano\"e~\cite{D91} in dimension two and Urbas~\cite{U1} in arbitrary dimensions. Chen--Liu--Wang~\cite{CLW1} obtained global boundary estimates for convex domains with $C^{1,1}$ boundaries. For arbitrary bounded convex planar domains with constant densities, Savin--Yu~\cite{SY} proved global $W^{2,p}$ estimates for every finite $p$. In arbitrary dimensions, Collins--Tong~\cite{CT} obtained global $W^{2,p}$ estimates for positive Dini continuous densities, and Chen--Li--Wang~\cite{CLiW} proved these estimates for positive continuous densities, without boundary smoothness assumptions. When the domains are $C^{1,\alpha},$ and densities are positive and $C^\alpha,$ Collins--Tong~\cite{CT} established the global $C^{2,\alpha}$ estimate.

Singularities may occur when the target is non-convex. In the plane, Yu~\cite{Yu} studied the structure of the singular set under some mild conditions on the source and target, and Figalli~\cite{AFi2} investigated the case with general non-convex domains. Figalli--Kim~\cite{FK1} proved partial regularity in arbitrary dimensions without convexity assumptions on the domains. Their result shows that points satisfying $\partial u(x)\cap\Omega^*\ne\varnothing$ belong to an open set on which the potential is regular. Kitagawa--McCann~\cite{KM1} established structure and stability results for discontinuities associated with separated target components. Chen--Liu~\cite{CL3} proved $C^{1,\beta}$ regularity of the interface for two separated convex target components without assuming convexity of the source. They also proved higher regularity for smooth uniformly convex target components and H\"older continuous densities.

For a polygonal target, Chen--Li--Liu~\cite{CLL} proved that the planar singular set is locally a smooth curve outside finitely many points. They also formulated the corresponding higher-dimensional conjecture for $\Sigma$ in \cite[Conjecture~5.1]{CLL}. The general convex-function stratification theorem of Alberti--Ambrosio--Cannarsa~\cite[Theorem~4.1]{AAC} controls points at which the subdifferential has dimension at least two. It remains to bound the Hausdorff dimension of the sets of points whose subdifferentials are singletons or segments with an extreme point on a lower-dimensional face of $\Omega^*$. The finiteness of the faces alone does not control a bound on the Hausdorff dimension of their images under $Dv$.

Here a polyhedral domain is the interior of a finite union of closed full-dimensional polytopes. Fix a finite face decomposition of its boundary. We call the union of faces of dimension at most $n-2$ the target skeleton. Near an open boundary facet, the domain is a halfspace. Near any boundary point, it is a finite union of full-dimensional convex polyhedral cones.

Polygonal and polyhedral domains arise naturally in computational geometry and numerical approximation, where complicated shapes are represented by polygonal or polyhedral meshes. For non-convex targets, the optimal transport map may be discontinuous. The following theorem describes the singular set for polyhedral targets in arbitrary dimensions.

\begin{theorem}\label{complete}
Suppose that $\Omega$ is bounded and open, and that $\Omega^*$ is a bounded polyhedral domain with $|\Omega|=|\Omega^*|$. Let $u$ be a convex solution of \eqref{transport}. There is a relatively closed set $E\subset\Sigma$, with $\dim_{\HH}E\le n-2$, such that $\Sigma$ is locally a smooth $(n-1)$-dimensional hypersurface at every point of $\Sigma\setminus E$.
\end{theorem}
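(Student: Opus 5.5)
We plan to pass to the Legendre transform $v=u^*$ and stratify $\Sigma$ by the geometry of the subdifferential $\partial u(x)$. Since $u$ solves \eqref{transport} in the Brenier sense, for $x\in\Sigma$ the set $\partial u(x)$ is a compact convex set with $\partial u(x)\cap\Omega^*=\varnothing$ when $x\in A$ (by the Figalli--Kim partial regularity \cite{FK1}, points with $\partial u(x)\cap\Omega^*\neq\varnothing$ lie in an open set where $u$ is smooth, hence away from $\Sigma$). Thus the extreme points of $\partial u(x)$ lie in $\partial\Omega^*$, and $\partial u(x)\subset\overline{\co}(\partial\Omega^*)$.

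We split $\Sigma=E_1\cup E_2\cup E_3\cup G$. Here $E_1=\{x:\dim\partial u(x)\ge2\}$, which by Alberti--Ambrosio--Cannarsa \cite[Theorem~4.1]{AAC} is countably $(n-2)$-rectifiable, so $\dim_\HH E_1\le n-2$. Next, $E_2$ consists of points where $\partial u(x)$ is a segment with an endpoint in the target skeleton. $E_3$ consists of differentiability points in $\Sigma\setminus A$ together with singleton subdifferentials lying in the skeleton. Finally, $G$ consists of points where $\partial u(x)=[y_1,y_2]$ with $y_1,y_2$ in the relative interiors of boundary facets $F_1,F_2$.

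On $G$ the argument is local. Near $y_i$ the target is a halfspace $H_i$, so near $x_0\in G$ we have $u=\max(u_1,u_2)$, where $u_i$ is the potential transporting onto the halfspace piece. Each $u_i$ solves a Monge--Amp\`ere second boundary problem with flat target boundary, and by Caffarelli/Chen--Liu \cite{CL3} type boundary regularity it is smooth up to the free boundary. The singular set is then $\{u_1=u_2\}$ with $D(u_1-u_2)\neq0$, since $y_1\neq y_2$. By the implicit function theorem it is a smooth hypersurface, and $\Sigma$ coincides with it nearby, which also gives relative openness of $G$. We then set $E=\Sigma\setminus G$, which is relatively closed once $G$ is shown relatively open, so the main task is $\dim_\HH(E_2\cup E_3)\le n-2$.

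This dimension bound is the main obstacle. For points whose subdifferential meets a face $S$ of dimension $k\le n-2$, we work with $v$: the preimage set is $\partial v(S')$ for $S'=S\cap\partial u(\Sigma)$. We plan a blow-up at $y\in S$ using the cone structure of $\Omega^*$. The tangent cone is a finite union of convex polyhedral cones, and the homogeneous rescalings of $v$ converge to a homogeneous solution invariant along the $k$-dimensional edge directions. We would aim to show that the map $y\mapsto\partial v(y)$ restricted to $S$ has image whose dimension is controlled by $k$ plus the dimension of the fibre of the homogeneous model, and that this fibre is at most $n-2-k$ dimensional because it reflects a lower-dimensional ``edge'' singularity of a homogeneous cone problem.

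Concretely, we would first prove a Lipschitz/H\"older estimate for $v$ near a skeleton point via a doubling argument, using Caffarelli-type strict convexity in each convex cone piece. This should give $\partial v(B_r(y)\cap S)\subset$ a set coverable by $C r^{-(n-2)}$ balls of radius $r$, to be verified by volume counting: $|\partial v(B_r(y))|\approx|B_r(y)\cap\Omega^*|\sim r^n$, combined with the engulfing property. Points in $E_3$ that are limits of $A$ but differentiable are handled the same way, since their gradients must lie on $\partial\Omega^*$ and, if not in the skeleton, fall into the facet analysis where the limit is excluded by the smooth structure of $G$. Summing over finitely many faces gives $\dim_\HH E\le n-2$.
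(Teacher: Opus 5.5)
There is a genuine gap exactly where you place the main obstacle. To get $\dim_{\HH}\partial v(S')\le k=\dim S\le n-2$ you need $Dv$ (not $v$, which is trivially Lipschitz) to be H\"older along $S'$ with exponent \emph{arbitrarily close to $1$}. A $C^{1,\alpha}$ bound with some fixed $\alpha<1$, which is what a doubling argument naturally gives, only yields $\dim_{\HH}\le k/\alpha$, and this exceeds $n-2$ when $k=n-2$. The volume count does not help. The identity $|\partial v(B_r(y))|\sim r^n$ says nothing about $\diam\partial v(B_r(y))$, since sections at skeleton points can be very eccentric. A covering of $\partial v(S')$ by $Cr^{-(n-2)}$ balls of radius $r$ amounts to a uniform Lipschitz bound for $Dv$ along the face, a $C^{1,1}$-type estimate that nothing in your plan supplies. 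The homogeneous blow-up is also unsupported. You do not justify that the rescalings have a homogeneous limit, nor that its ``fibre'' has dimension at most $n-2-k$. Even a classification of limits would not give a quantitative, scale-by-scale bound.

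The paper's key step is Theorem~\ref{estimate}. At an extreme point $p_0$ of $\partial u(x_0)$ with $\dim\partial u(x_0)\le1$, it uses a localized Collins--Tong monotonicity formula for $F(h)=h^{-n/2}\int_{\{x\cdot Du<h\}}\zeta(Du)\,dx$. This formula is derived through smooth approximations of $v$, with a cutoff $\zeta$ isolating the endpoint $p_0$, and it has an error term $O(\sqrt{\delta_h})M(h)$. It forces the normalized blow-ups of $u$ at $x_0$ to be $2$-homogeneous along a density-one set of geometric scales. Iterating gives $\diam\{u<h\}\le C_\eta h^{1/2-\eta}$, and duality then gives $|Dv(y)-x_0|\le C_{p_0,\varepsilon}|y-p_0|^{1-\varepsilon}$ for every $\varepsilon$. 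The dependence of the constants on $p_0$ is removed by the countable decomposition in Lemma~\ref{dimension}.

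Several supporting ingredients are also missing.

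\textbf{Strict convexity.} You never prove that $u$ is strictly convex, i.e.\ that $\partial v(p)=\{x\}$ and $\mu_v=\chi_{\Omega^*}\,dy$ near every $p\in\partial u(x)$. Without this, $\partial v$ need not be single-valued on the face, $\partial v(S')$ may contain segments, and a H\"older estimate for $Dv$ has no meaning. Caffarelli's theorem cannot be applied ``in each convex cone piece'', because near a skeleton point $\mu_v$ lives on a non-convex union of overlapping cones. The paper needs a separate argument for this (Theorem~\ref{strictness}), based on barycenter drift of auxiliary sections and the Figalli--Kim depth estimates.

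\textbf{The set $G$.} Your analysis of $G$ tacitly uses Lemma~\ref{middle}, namely that the open segment $(p_1,p_2)$ misses $\overline{\Omega^*}$. This is what gives the splitting $u=\max\{u_1,u_2\}$ and the obliqueness $\nu_i\cdot\nu_i^*>0$ needed for flat-facet boundary regularity. That lemma is itself proved from the $C^{1,1-\varepsilon}$ growth of $v$ at both endpoints, with $\varepsilon<1/n$, via a mass comparison in thin tubes.

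\textbf{Differentiability points.} A differentiability point of $\Sigma$ whose gradient lies in an open facet is not excluded by ``the smooth structure of $G$''. Lemma~\ref{cusp} instead uses that $Du$ takes values in a half-space near $x_0$. Hence $u$ is monotone in the direction $\nu^*$, which contradicts strict convexity.
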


For $0<m<\min\{|\Omega|,|\Omega^*|\}$, let $U\subset\Omega$ and $V\subset\Omega^*$ be the active source and target, and put $\mathcal F=\partial U\cap\Omega$. The existence, uniqueness and free boundary geometry of partial transport with separated supports were established by Caffarelli--McCann~\cite{CM}. Figalli~\cite{AFi} extended the result to the case of overlapping source and target domains. Further free boundary regularity was obtained by Indrei~\cite{I} and Chen--Liu--Wang~\cite{CLW3} under convexity assumptions on the fixed domains. For a polygonal target, Chen--Li--Liu~\cite{CLL} proved smoothness of the source free boundary outside finitely many points and formulated \cite[Conjecture~5.2]{CLL} in higher dimensions.

\begin{theorem}\label{partial}
Suppose that $\Omega$ is bounded and open, and that $\Omega^*$ is a bounded polyhedral domain separated from $\Omega$ by a hyperplane. Let $0<m<\min\{|\Omega|,|\Omega^*|\}$. There is a relatively closed set $E_{\mathcal F}\subset\mathcal F$, with $\dim_{\HH}E_{\mathcal F}\le n-2$, such that $\mathcal F$ is locally a smooth hypersurface at every point of $\mathcal F\setminus E_{\mathcal F}$.
\end{theorem}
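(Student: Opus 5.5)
The plan is to reduce Theorem~\ref{partial} to the structure of the complete transport in Theorem~\ref{complete}, via the Caffarelli--McCann description of the free boundary. Since $\Omega$ and $\Omega^*$ are separated by a hyperplane, Caffarelli--McCann~\cite{CM} give the following picture. The active regions satisfy $U=\{x\in\Omega:\ \psi(x)<\ldots\}$, described through a Lagrange multiplier $\lambda>0$. There is a convex potential $u$ with $(Du)_\#\chi_U=\chi_V$, and the free boundary $\mathcal F$ is characterized by a dual constraint: for $x\in\mathcal F$ one has $|x-y|=\sqrt{2\lambda}$-type equality along the transport ray. In particular, each $x\in\mathcal F$ is joined to a point $y\in\partial V\cap\overline{\Omega^*}$ by a segment in the direction $x-Du(x)$, and $\mathcal F$ is a Lipschitz graph, say $\mathcal F=\{x_n=g(x')\}$, that is semiconvex. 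The map $x\mapsto Du(x)$ then sends $\mathcal F$ to the target free boundary $\partial V\cap\Omega^*$, and $\mathcal F$ is smooth near $x$ exactly when $u$ is regular up to $\mathcal F$ near $x$.

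The first step is to record, from~\cite{CM} together with the separation hypothesis, that $u$ is $C^{1,\alpha}_{loc}$ up to $\mathcal F$ away from the set where $Du$ meets $\partial\Omega^*$ or fails to be single valued. At any $x_0\in\mathcal F$ with $\partial u(x_0)\cap\Omega^*\neq\varnothing$, the target near $\partial u(x_0)$ is locally convex (in fact the local picture is the whole space intersected with the convex set $V$ near its free boundary). Caffarelli's theory and the free-boundary bootstrap of~\cite{CM,I} then give smoothness of $\mathcal F$ near $x_0$. The bad points are therefore those $x\in\mathcal F$ with either $\partial u(x)$ not a singleton, or $\partial u(x)\subset\partial\Omega^*$.

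Next I split these bad points exactly as in the proof of Theorem~\ref{complete}. Points with $\dim\partial u(x)\ge2$ form a set of dimension at most $n-2$ by Alberti--Ambrosio--Cannarsa~\cite[Theorem~4.1]{AAC}. For points whose subdifferential is a segment, or a singleton lying in an open facet of $\Omega^*$, the local target is a halfspace intersected with $V$, hence convex. Boundary regularity for convex targets then still applies, and the free boundary is smooth there, with the target free boundary meeting the facet. This leaves the points whose image meets the target skeleton. Here I would reuse the key dimension estimate from the proof of Theorem~\ref{complete}: the set of $x$ with an extreme point of $\partial u(x)$ on a face of dimension at most $n-2$ has Hausdorff dimension at most $n-2$. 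I would apply that estimate to the Legendre dual $v=u^*$ restricted to neighbourhoods of the skeleton. The argument should carry over because it is local in the target and uses only the cone structure of $\Omega^*$ and the doubling and engulfing properties of sections. Near $\mathcal F$ these persist, since $U$ is locally a Lipschitz domain and $Du$ is the gradient of the Brenier solution between $U$ and $V$. Relative closedness of the union $E_{\mathcal F}$ follows because its complement is defined by an open regularity condition.

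The main obstacle is this last step: transferring the skeleton estimate to the partial setting. On $\mathcal F$ the source density is truncated, so the doubling measure condition for $\chi_U$ near $\mathcal F$ must be checked. I would handle it by the Caffarelli--McCann observation that $U$ is locally a Lipschitz graph domain and so satisfies the uniform interior cone condition. This gives doubling of $\chi_U$ on sections and lets the blow-up at skeleton points, reducing to a cone target and a halfspace-type source, proceed as in Theorem~\ref{complete}.
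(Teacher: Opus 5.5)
Your top-level bookkeeping matches the paper: regular points, the set $\{\dim\partial u\ge2\}$ via \cite[Theorem~4.1]{AAC}, and the preimage of the target skeleton via a H\"older dimension count. However, the ingredients that make it work are missing or misidentified.

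First, the geometry is wrong. $\mathcal F$ is not mapped to the target free boundary $\partial V\cap\Omega^*$. Use the augmented Caffarelli--McCann formulation $(Du)_\#\chi_\Omega=\chi_{V\cup\operatorname{int}(\Omega\setminus U)}$. There one has $x\in\partial u(x)$ for every $x\in\mathcal F$, and \cite[Proposition~2]{FK1} gives $\partial u(x)\cap V=\varnothing$. The interior-ball argument based on \eqref{balls} then puts the other extreme point $p$ on the \emph{fixed} boundary $\partial\Omega^*$, with $B_\rho(p)\cap\Omega^*\subset V$ (Lemma~\ref{partialgeometry}). So the case you treat as good, $\partial u(x_0)\cap\Omega^*\ne\varnothing$, does not arise. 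Every point of $\mathcal F$ has $\partial u(x)\supset[x,p]$ with $p\in\partial\Omega^*$. It is precisely because $V=\Omega^*$ near $p$ that the cone structure \eqref{cones} and the equation $\mu_v=\chi_{\Omega^*}$ are available at $p$. Under your picture the local target at $p$ would be bounded by a merely Lipschitz free boundary, and there would be no polyhedral structure to exploit.

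Second, the skeleton bound does not carry over through doubling and engulfing of sections. Those give at most $C^{1,\beta}$ for some small $\beta$, and the counting argument of Lemma~\ref{dimension} then yields only $\dim_{\HH}\le(n-2)/\beta$, which is useless. What is needed is the pointwise $C^{1,1-\varepsilon}$ estimate of Theorem~\ref{estimate} at $p$ for \emph{every} $\varepsilon$. Its monotonicity formula lives on the sublevel sets $\{\Psi<h\}$ around $x$. It requires $u$ to be strictly convex and to solve the equation with source density $1$ on a full ball about $x$; this is the change of variables $dx=f_k\,dy$ behind \eqref{smoothwindow}. In your formulation $(Du)_\#\chi_U=\chi_V$, the point $x$ lies on $\partial U$, these sublevel sets straddle $\mathcal F$, and the identity fails. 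An interior cone condition for $U$ does not repair this.

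The missing idea is to work with the augmented potential. It is strictly convex on all of $\Omega$: on $U$ this follows from Theorem~\ref{strictness} applied to the companion potential $\bar u$, and on $\Omega\setminus U$ it equals the quadratic obstacle. Then Theorem~\ref{estimate} applies with $L=[x,p]$ and the cutoff placed at $p$.

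Finally, at facet points, ``boundary regularity for convex targets'' is not enough by itself, because Proposition~\ref{boundary} assumes the obliqueness $\nu_U(x)\cdot\nu^*>0$. The case $(p-x)\cdot\nu^*<0$ already contradicts $\partial u(x)\cap V=\varnothing$. Excluding $(p-x)\cdot\nu^*=0$, i.e.\ $[x,p]$ tangent to the facet, is Lemma~\ref{partialmiddle}, which again needs the $C^{1,1-\eta}$ estimate with $\eta<1/n$.
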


\begin{remark}
Theorems~\ref{complete} and~\ref{partial} also hold for the source and target measures $f\chi_\Omega\,dx$ and $g\chi_{\Omega^*}\,dy$, provided
\[
0<\lambda\le f,g\le\Lambda,\qquad f\in C^\infty(\Omega),
\]
and $g$ extends to a positive smooth function on a neighbourhood of $\overline{\Omega^*}$.
\end{remark}

These two theorems resolve the conjectures posed in our earlier work \cite{CLL}. No convexity or boundary regularity is assumed for the source. We first establish the strict convexity of $u$ in Theorem~\ref{strictness}. Let $v$ denote the potential associated with the optimal transport from $\Omega^*$ to $\Omega$. When $\dim\partial u(x)\le1$ and $p \in \ext\partial u(x),$ Theorem~\ref{estimate} proves that, for every $0<\varepsilon<1$,
\[
|Dv(y)-Dv(p)|\le C_{p,\varepsilon}|y-p|^{1-\varepsilon}
\qquad\text{for }|y-p|<r_{p,\varepsilon}.
\]
The constants may depend on the subdifferential point. By a countable decomposition, these H\"older estimates are sufficient to bound the Hausdorff dimension of the images of the skeletal faces.

Our argument is based on the monotonicity formula of Collins--Tong~\cite[Theorem~3.1]{CT} and the good--bad scale method of Chen--Li--Wang~\cite[Propositions~7.2--7.3]{CLiW}. We derive a localized identity for smooth dual pairs obtained by approximating the inverse potential $v$, and pass to the limit using uniform energy estimates. At an open target facet, we verify obliqueness and apply the local regularity argument of Chen--Liu~\cite[Section~5]{CL3}, together with the flat-target adaptation of Chen--Li--Liu~\cite[Appendix, Proposition~6.1]{CLL} and the section estimates and perturbation method of Chen--Liu--Wang~\cite{CLW3}.

The paper is organized as follows. In Section~\ref{preliminaries} we introduce notations used throughout the paper and some basic estimates for the solution. In Section~\ref{convexity} we prove strict convexity of $u$. In Section~\ref{radial} we prove the $C^{1,1-\varepsilon}$ estimate of $v$. Sections~\ref{singularities} and~\ref{freeboundaries} contain the proofs of Theorems~\ref{complete} and~\ref{partial}, respectively. The local regularity argument at a flat target facet is included in Appendix~\ref{flat}.

\section{Preliminaries}\label{preliminaries}
\subsection{Notations}
Suppose $w:\mathbb{R}^n\rightarrow \mathbb{R}$ is a convex function. We use $\partial w$ for the convex subdifferential, $Dw$ for the almost-everywhere gradient, and $w^*$ for the Legendre transform. The associated Monge-Amp\`ere measure is defined by
\[
\mu_w(E)=|\partial w(E)|
\]
for any measurable set $E\subset \mathbb{R}^n,$ where $|\cdot |$ denotes the $n$-dimensional Hausdorff measure. For orthogonal projections onto a hyperplane, $|\cdot|$ denotes the induced $(n-1)$-dimensional measure. For a convex set $G\subset\R^n$, we denote by $\ext G$ its set of extreme points and by $\relint G$ its interior relative to its affine hull. We write $tG:=\{ty:y\in G\}$ for dilation about the origin and denote by $z_G$ its barycenter when $G$ is bounded and $|G|>0$. For a set $E\subset\R^n$, we denote its convex hull by $\co E$. In the following, we write $C,c$ for positive constants, which may change from line to line, and indicate their dependencies when needed.

Let $\Omega^*$ be a bounded open polyhedral set. For each $p\in\overline{\Omega^*}$ there exist $r_p>0$ and finitely many closed full-dimensional convex polyhedral cones $\mathcal C_1,\ldots,\mathcal C_N$ with vertex at the origin such that up to a set of measure zero,
\begin{equation}\label{cones}
(\Omega^*-p)\cap B_{r_p}=\left(\bigcup_{j=1}^N\mathcal C_j\right)\cap B_{r_p}.
\end{equation}
Note that the cones may overlap.

\subsection{Basic estimates}
Let $\Omega\subset\R^n$ be a bounded open set with $|\Omega|=|\Omega^*|$, and let
\begin{equation}\label{source}
(Du)_\#\chi_\Omega=\chi_{\Omega^*}.
\end{equation}
We extend $u$ to $\R^n$ as follows
$$\tilde u(x):=\sup\{L(x): L\text{ is affine},\ L\le u\text{ in }\Omega,\ DL\in\Omega^*\}.$$
The extension agrees with $u$ in $\Omega$ and is globally Lipschitz. For simplicity of notation, we denote the extended function $\tilde{u}$ as $u.$ Let $v(y)=\sup_{x\in\Omega}\{x\cdot y-u(x)\}.$ It is also well known that $(Dv)_\#\chi_{\Omega^*}=\chi_{\Omega}.$ Moreover,
\begin{equation}\label{duality}
u\leq v^*\quad\hbox{in }\R^n,
\qquad u=v^*\quad\hbox{in }\Omega,
\end{equation}
and, for $x\in\Omega$, we have
\begin{equation}\label{contacts}
y\in\partial u(x)
\quad\Longleftrightarrow\quad x\in\partial v(y)
\quad\Longleftrightarrow\quad u(x)+v(y)=x\cdot y.
\end{equation}
These are the Legendre duality relations \cite[Appendix~A.4.2]{FiBook}, applied to $v^*$, which agrees with $u$ on the open set $\Omega$. On full-measure subsets of $\Omega$ and $\Omega^*$, the maps $Du$ and $Dv$ are inverse and preserve Lebesgue measure \cite{Br1}. Thus, for every Borel set $E\subset\R^n$,
\begin{equation}\label{measure}
|E\cap\Omega^*|\leq\mu_v(E),
\qquad |\partial v(E)\cap\Omega|\leq|E\cap\Omega^*|.
\end{equation}

Following \cite[Section~2]{CLW1}, we introduce two different sub-level sets of $u$ at $x_0\in \overline{\Omega}.$ One is the centred sub-level set
\[
S_h^c[u](x_0)=\{y\in \mathbb{R}^n:u(y)<u(x_0)+b\cdot(y-x_0)+h\},
\]
where $b$ is chosen so that $x_0$ is the mass center for $ S_h^c[u](x_0)$. When the graph of $u$ does not contain any straight line, existence and uniqueness of centered sections is well known, see for instance \cite[Theorem~A.7 and Lemma~A.8]{CM}. The other one is the sub-level set of $u$ with height $h$
\[
S_h[u](x_0)=\{y\in \Omega:u(y)<u(x_0)+p\cdot(y-x_0)+h\},
\qquad p\in\partial u(x_0).
\]
In the following we will write $S_h^c[u](x_0)$ and $S_h[u](x_0)$ as $S_h^c$ and $S_h$ when no confusion arises.

We recall an elementary centroid estimate for nonnegative affine functions. Let $G$ be a convex body with centroid $z_G$. Every nonnegative affine function $\ell$ on $G$ satisfies
\begin{equation}\label{affine}
\sup_G\ell\le(n+1)\ell(z_G).
\end{equation}
Indeed, if $M=\max_G\ell>0$ and $\ell(z)=M$, then $z+(1-t/M)(G-z)\subset\{\ell\ge t\}$ for $0<t<M$. Integrating in $t$ yields $\ell(z_G)=|G|^{-1}\int_G\ell\ge M/(n+1)$. In fact, by the same argument and Jensen's inequality, one can prove
\begin{equation}\label{concavity}
\sup_G\varphi\le(n+1)\varphi(z_G)
\end{equation}
for every nonnegative concave function $\varphi$ on $G$.

Let $S_h^c$ be a bounded centred section with centroid $z_h$. We write
\[
(tS_h^c)_{z_h}:=z_h+t(S_h^c-z_h),\qquad t>0.
\]
A dilation $tS_h$ without an external centre subscript is always taken about the origin. By \cite[Lemma~A.3]{CM}, there exists an invertible linear map $T_h$ such that
\begin{equation}\label{normalization}
B_1\subset T_h(S_h^c-z_h)\subset B_{n^{3/2}},\qquad
|\det T_h|\approx|S_h^c|^{-1}.
\end{equation}
Consequently,
\begin{equation}\label{balance}
-(S_h^c-z_h)\subset n^{3/2}(S_h^c-z_h).
\end{equation}
These estimates imply the following localization of centred sections of $v$.
\begin{lemma}\label{localization}
Suppose $v\ge0$ in $\R^n$ and $p\in\ext\{v=0\}$. If $z_k\to p$ in $\R^n$ and $h_k\to0^+$, then
\[
\diam S_{h_k}^c[v](z_k)\to0.
\]
\end{lemma}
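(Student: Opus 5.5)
We argue by contradiction, normalizing each section by the affine map in \eqref{normalization} and using that $p$ is an extreme point of the contact set $\{v=0\}$.

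Suppose instead that, after passing to a subsequence, $\diam S_{h_k}^c[v](z_k)\ge\delta>0$. Write $S_k=S^c_{h_k}[v](z_k)$. Each $S_k$ is a convex set with barycenter $z_k$ and defining affine function $\ell_k(y)=v(z_k)+b_k\cdot(y-z_k)+h_k$, so that
\[
S_k=\{y:\ v(y)<\ell_k(y)\}.
\]
Choose $y_k\in\overline S_k$ with $|y_k-z_k|\ge\delta/2$. By \eqref{balance}, the point
\[
z_k-n^{-3/2}(y_k-z_k)
\]
also lies in $\overline S_k$. Hence $\overline S_k$ contains a segment through $z_k$ of length at least $c\delta$ in each of the two directions $\pm e_k$, where $e_k=(y_k-z_k)/|y_k-z_k|$. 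By convexity, $\overline S_k$ contains the segment $I_k=[z_k-c\delta e_k,\ z_k+c\delta e_k]$ with $c=c(n)>0$. Passing to a further subsequence, $e_k\to e$, so $I_k$ converges to $I=[p-c\delta e,\ p+c\delta e]$.

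Next we show that $v$ vanishes on $I$. On $\overline S_k$ we have $0\le v\le\ell_k$, and $\ell_k$ is nonnegative on $\overline S_k$. By \eqref{affine},
\[
\sup_{\overline S_k}\ell_k\le(n+1)\,\ell_k(z_k)=(n+1)\bigl(v(z_k)+h_k\bigr).
\]
Here we use that $v$ is continuous, which holds because $v$ is finite and convex on $\R^n$ (it is Lipschitz since $\Omega$ is bounded). Since $v(p)=0$, the right-hand side tends to $v(p)+0=0$. Therefore $\sup_{I_k}v\to0$. Uniform convergence of $I_k$ to $I$ and continuity of $v$ then give $v\equiv0$ on $I$.

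Thus $I\subset\{v=0\}$, and $p$ is the midpoint of the nondegenerate segment $I$. This contradicts $p\in\ext\{v=0\}$.

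The main technical point is the existence of centered sections. If the graph of $v$ contained a line, the sections might not exist or might be unbounded. We assume, as in the setting where the lemma is applied, that $S_k$ is a well-defined bounded centred section. In particular, boundedness is needed for the normalization \eqref{balance}. Without boundedness one would instead take $y_k$ at distance $\delta/2$ inside the convex set, and the same symmetric-segment argument applies. Note also that the estimate \eqref{affine} is applied to a convex body, so we need $|S_k|>0$. This holds because $\ell_k(z_k)-v(z_k)=h_k>0$, which makes $S_k$ an open neighborhood of $z_k$.
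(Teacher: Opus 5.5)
Your proof is correct and follows the paper's argument: \eqref{affine} on the centred section gives $\sup_{\overline{S_k}} v\le(n+1)(v(z_k)+h_k)\to0$, \eqref{balance} produces a symmetric segment $z_k\pm re_k$ in $\overline{S_k}$, and passing to the limit yields a nondegenerate segment in $\{v=0\}$ with midpoint $p$, contradicting extremality. Your closing aside about unbounded sections is unnecessary, and its suggested workaround would not work as stated: a centred section has a barycentre and is therefore bounded by definition, and both \eqref{balance} and \eqref{affine} require a bounded convex body.
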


\begin{proof}
Write $S_{h_k}^c:=S_{h_k}^c[v](z_k)=\{v<\ell_k\}$, where $\ell_k$ is affine and $\ell_k(z_k)=v(z_k)+h_k$. By \eqref{affine}, we have
\[
0\le\sup_{\overline{S_{h_k}^c}}v
\le(n+1)\bigl(v(z_k)+h_k\bigr)\to0.
\]
If $\operatorname{diam}S_{h_k}^c\not\to0$, by \eqref{balance}, along a subsequence, there exist $r>0$ and $e_k\in\mathbb S^{n-1}$ such that
\[
z_k\pm re_k\in\overline{S_{h_k}^c},\qquad e_k\to e\in\mathbb S^{n-1}.
\]
This implies $v(p\pm re)=0$, contradicting the fact that $p\in\ext\{v=0\}$.
\end{proof}

\begin{lemma}[\cite{FK1}]\label{depth}
Let $S_h^c=\{v<\ell_h\}$ and let $T_h$ satisfy \eqref{normalization}. Then
\begin{equation}\label{lowerdepth}
\left(\sup_{S_h^c}(\ell_h-v)\right)^n
\ge c(n)|(\tfrac12 S_h^c)_{z_h}\cap\Omega^*|\,|S_h^c|.
\end{equation}
If $p\in S_h^c$, $x\in\partial v(p)$, and $B_\sigma(x)\subset\Omega$ for some $\sigma>0$, then
\begin{equation}\label{upperdepth}
[\ell_h(p)-v(p)]^n
\le C\!\left(n,\frac{\diam\Omega}{\sigma}\right)
|S_h^c\cap\Omega^*|\,|S_h^c|\,
\operatorname{dist}\bigl(
T_h(p-z_h),\partial T_h(S_h^c-z_h)
\bigr).
\end{equation}
\end{lemma}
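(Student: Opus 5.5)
The plan is to follow the Figalli--Kim scheme: compare $v$ with the quadratic-type solutions of $\det D^2w=$ const on the normalized section, using $(Dv)_\#\chi_{\Omega^*}=\chi_\Omega$ and \eqref{measure}. Normalize with the affine map $T_h$ from \eqref{normalization}. Set
\[
\tilde v(z)=|\det T_h|^{2/n}\bigl(v-\ell_h\bigr)(T_h^{-1}z+z_h),
\qquad S:=T_h(S_h^c-z_h),
\]
so that $B_1\subset S\subset B_{n^{3/2}}$ and $\tilde v=0$ on $\partial S$. The Monge--Amp\`ere measure of $\tilde v$ is then comparable to $|\det T_h|^{-1}$ times $\mu_v$, pulled back.

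For the lower bound \eqref{lowerdepth}, I will use \eqref{measure}: $\mu_v\ge\chi_{\Omega^*}$, so $\mu_v\bigl((\tfrac12S_h^c)_{z_h}\bigr)\ge|(\tfrac12 S_h^c)_{z_h}\cap\Omega^*|$. Next comes the standard Aleksandrov-type estimate. If $w$ is convex with $w=0$ on $\partial G$, $G$ normalized, and $M=\sup_G(-w)$, then every subgradient at a point of $\tfrac12G$ has norm at most $CM$, since the distance to the boundary there is $\gtrsim$ the inradius. Hence $\partial w(\tfrac12 G)\subset B_{CM}$, which gives $\mu_w(\tfrac12G)\le C M^n$. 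Undoing the normalization, $\partial(v-\ell_h)$ on the half-section has diameter bounded by $C\sup(\ell_h-v)$ divided by the width in each direction. The volume bound then becomes $|\partial v((\tfrac12S^c_h)_{z_h})|\le C\sup(\ell_h-v)^n/|S_h^c|$. Combining the two inequalities yields \eqref{lowerdepth}.

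For the upper bound \eqref{upperdepth}, I use the opposite direction of \eqref{measure}, $|\partial v(S_h^c)\cap\Omega|\le|S_h^c\cap\Omega^*|$, together with the ball hypothesis.

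First I show that $\partial v(S_h^c)$ contains a set of volume at least $c\,[\ell_h(p)-v(p)]^n/(|S_h^c|\,d)$, where $d=\operatorname{dist}(T_h(p-z_h),\partial S)$. This is the usual cone argument: take the cone $K$ with vertex $(p,v(p))$ over $\partial S_h^c$ at height $\ell_h$. Its slopes lie in $\partial v(S_h^c)$ after adding $D\ell_h$. In normalized coordinates, the cone's gradient image contains a set of measure $\gtrsim \tilde M^n/d$, because the boundary point nearest to $p$ produces a long, thin slope set. This is Aleksandrov's estimate in the Guti\'errez form. Rescaling gives the claimed volume.

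The difficulty is that only the part inside $\Omega$ is controlled. Here the hypothesis $x\in\partial v(p)$ with $B_\sigma(x)\subset\Omega$ enters. Every slope of $K$ is a convex combination relative to $x$, so I would shrink the cone toward $x$. Concretely, I replace $K$ by $K_t=v(p)+x\cdot(\cdot-p)+t(K-v(p)-x\cdot(\cdot-p))$ with $t\approx\sigma/\diam\Omega$. Its gradient image is $x+t(\partial K(\cdot)-x)$, which stays within $B_\sigma(x)\subset\Omega$ because all slopes of $v$ on $S_h^c$ lie in $\partial v(\R^n)\subset\overline\Omega$ by construction of the extension. Comparison with $v$ then shows that these slopes are attained by $\partial v$ on $S_h^c$, at the cost of a factor $t^n$.

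The main obstacle is exactly this step: ensuring that the comparison cone's slopes are realized inside $S_h^c$ and inside $\Omega$, with constants depending only on $\diam\Omega/\sigma$. I would handle it by the supporting-hyperplane sliding argument of Figalli--Kim. The estimate \eqref{upperdepth} then follows by dividing by $|S_h^c|$ factors.
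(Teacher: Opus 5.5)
Your proposal is correct and reconstructs the argument the paper only cites from Figalli--Kim. The lower bound is the Aleksandrov gradient bound on the half-section combined with $\mu_v\ge\chi_{\Omega^*}$. The upper bound is Figalli--Kim's Proposition~1 with source and target interchanged: the cone's subdifferential at $p$ is shrunk toward $x$ by the factor $t=\sigma/\diam\Omega$, so it lies in $B_\sigma(x)\subset\Omega$, and then $|\partial v(S_h^c)\cap\Omega|\le|S_h^c\cap\Omega^*|$ gives \eqref{upperdepth}.

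There are two small corrections. First, $\partial v(\R^n)\subset\co\overline\Omega$ rather than $\overline\Omega$; this is harmless, since only $\diam\co\overline\Omega=\diam\Omega$ is used. Second, the step you flag as the main obstacle is immediate and needs no sliding argument. Since $v\le K$ on $\overline{S_h^c}$ with equality at $p$, you have $x\in\partial v(p)\subset\partial K(p)$. Convexity of $\partial K(p)$ then gives $x+t(\partial K(p)-x)\subset\partial K(p)\subset\partial v(S_h^c)$.
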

The lower bound is the Alexandrov estimate in \cite[Section~3.1, equation~(4)]{FK1}, combined with \eqref{measure} after normalization. The upper bound is \cite[Proposition~1]{FK1}, with source and target interchanged. Its proof applies using \eqref{measure}, $\partial v(\R^n)\subset\co\overline\Omega$, and $B_\sigma(x)\subset\Omega$.

The following consequence will be used in the proof of Theorem~\ref{strictness}.

\begin{corollary}\label{depthcor}
Let $S_h^c,p,x,\sigma$ be as in Lemma~\ref{depth}. If, for some $c_*>0$,
\[
\ell_h(p)-v(p)\ge c_*\sup_{S_h^c}(\ell_h-v),\qquad
|(\tfrac12 S_h^c)_{z_h}\cap\Omega^*|\ge c_*|S_h^c\cap\Omega^*|,
\]
then
\begin{equation}\label{contradiction}
\operatorname{dist}\bigl(T_h(p-z_h),\partial T_h(S_h^c-z_h)\bigr)
\ge c\!\left(n,\frac{\diam\Omega}{\sigma}\right)c_*^{n+1}>0.
\end{equation}
\end{corollary}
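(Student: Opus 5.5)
The corollary follows by combining the two inequalities of Lemma~\ref{depth} and cancelling the common factors. Write $M:=\sup_{S_h^c}(\ell_h-v)$ and $d:=\operatorname{dist}\bigl(T_h(p-z_h),\partial T_h(S_h^c-z_h)\bigr)$. The steps are as follows.

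First, by the first hypothesis, $[\ell_h(p)-v(p)]^n\ge c_*^nM^n$.

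Second, the lower bound \eqref{lowerdepth} together with the second hypothesis gives
\[
M^n\ge c(n)\,|(\tfrac12 S_h^c)_{z_h}\cap\Omega^*|\,|S_h^c|\ge c(n)\,c_*\,|S_h^c\cap\Omega^*|\,|S_h^c|.
\]

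Third, insert both bounds into the upper bound \eqref{upperdepth}:
\[
c(n)\,c_*^{n+1}\,|S_h^c\cap\Omega^*|\,|S_h^c|\le C\!\left(n,\tfrac{\diam\Omega}{\sigma}\right)|S_h^c\cap\Omega^*|\,|S_h^c|\,d .
\]
Dividing by the product of the two measures yields $d\ge c(n)C^{-1}c_*^{n+1}$, which is \eqref{contradiction}.

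The only point needing care is that the division is legitimate, that is, $|S_h^c\cap\Omega^*|\,|S_h^c|>0$. Here $S_h^c$ is a bounded open convex set containing $z_h$, so $|S_h^c|>0$. For the other factor, observe that if $|S_h^c\cap\Omega^*|=0$, then by the second hypothesis $|(\tfrac12S_h^c)_{z_h}\cap\Omega^*|=0$, and \eqref{lowerdepth} alone gives no information. So I would argue positivity directly from \eqref{upperdepth}. Since $p\in S_h^c=\{v<\ell_h\}$, the left side $[\ell_h(p)-v(p)]^n$ is strictly positive. As $d$ is finite (the normalized section lies in $B_{n^{3/2}}$), \eqref{upperdepth} then forces $|S_h^c\cap\Omega^*|>0$.

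Once the products are known to be positive, the cancellation is valid and the constant depends only on $n$ and $\diam\Omega/\sigma$. I expect no genuine obstacle beyond this positivity check.
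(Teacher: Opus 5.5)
Your proposal is correct and matches the paper's proof. The paper likewise gets $|S_h^c\cap\Omega^*|>0$ from \eqref{upperdepth} using $p\in S_h^c$, then chains the first hypothesis, \eqref{lowerdepth} with the second hypothesis, and \eqref{upperdepth}, and cancels the common factor $|S_h^c\cap\Omega^*|\,|S_h^c|$.
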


\begin{proof}
Since $p\in S_h^c$, \eqref{upperdepth} implies $|S_h^c\cap\Omega^*|>0$. By \eqref{lowerdepth} and the assumptions,
\[
[\ell_h(p)-v(p)]^n
\ge c_*^n\left(\sup_{S_h^c}(\ell_h-v)\right)^n
\ge c(n)c_*^{n+1}|S_h^c\cap\Omega^*|\,|S_h^c|.
\]
Combining this with \eqref{upperdepth}, we obtain \eqref{contradiction}.
\end{proof}

\section{Strict convexity }\label{convexity}

In this section, we prove strict convexity of $u$ for non-convex polyhedral targets and establish the local equation for $v$ needed in Section~\ref{radial}.

\begin{theorem}\label{strictness}
Assume \eqref{source}, and suppose that the representation \eqref{cones} holds at every $p\in\overline{\Omega^*}$. Then $u$ is strictly convex in $\Omega$. For $x\in\Omega$ and $p\in\partial u(x)$, there exists a neighborhood $\mathcal N_p$ of $p$ such that
\begin{equation}\label{localequation}
v\in C^1(\mathcal N_p),\qquad Dv(p)=x,\qquad
\mu_v\lfloor\mathcal N_p
=\chi_{\Omega^*}\,dy\lfloor\mathcal N_p.
\end{equation}

\end{theorem}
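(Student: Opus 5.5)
Strict convexity of $u$ in $\Omega$ is equivalent to the following: for every $x\in\Omega$ and every $p\in\partial u(x)$, the contact set $\partial v(p)$ is a single point. By \eqref{contacts}, $\partial v(p)\ni x$. Suppose, for contradiction, that $x\in\Omega$ and that the convex set $K=\{y:u(y)=u(x)+p\cdot(y-x)\}$ contains more than one point. Fix a point $x_1\in K$ that is an extreme point of $K$ and is interior in the sense that $B_\sigma(x_1)\subset\Omega$; we may first choose $x$ itself close to an extreme point, shrinking $\sigma$ if needed.

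We dualize. Subtract an affine function so that $v\ge0$ and $\{v=0\}$ contains the relevant contact set in the dual picture. More precisely, we work with the roles arranged so that the degenerate set is $\{v=0\}$ and $p$ is an extreme point there: either $\partial u(x)$ is larger than a point, or $u$ is affine along a segment. In the latter case we normalize $v$ at $p$ via the supporting affine function $x\cdot y-u(x)$, so that $v(y)-x\cdot y+u(x)\ge0$ vanishes at $p$. The standard Caffarelli reduction is to take centred sections $S^c_{h}[v](z_h)$ with $z_h\to p$ and $h\to0$, where the sections are chosen to "see" the degeneracy. Lemma~\ref{localization} then guarantees that the sections shrink to $p$. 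On such small scales, \eqref{cones} says that $\Omega^*$ near $p$ coincides with a finite union of convex cones $\bigcup\mathcal C_j$ with vertex $p$.

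The key point is the doubling-type condition
\[
|(\tfrac12S^c_h)_{z_h}\cap\Omega^*|\ge c_*|S^c_h\cap\Omega^*|
\]
with $c_*$ uniform in $h$. We expect to obtain it from the conical structure. Since $S^c_h$ is balanced by \eqref{balance} and $\Omega^*\cap B_{r_p}$ is a finite union of cones through $p$, we normalize by $T_h$; the cones become again finitely many convex cones with vertex $T_h(p-z_h)$. The measure of a finite union of cones intersected with a normalized convex body, compared with the half-dilate, is controlled by a compactness argument over the Grassmannian-type space of configurations of $N$ cones and normalized convex bodies. Convexity of each cone lets us compare $\mathcal C_j\cap S$ with $\mathcal C_j\cap(\tfrac12S)_{z}$. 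The delicate case is when $p$ lies near $\partial S$ after normalization; then $\mathcal C_j\cap S$ may be thin.

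Choosing $z_h$ and the height so that $\ell_h(p)-v(p)\ge c_*\sup(\ell_h-v)$ (for instance using \eqref{concavity} applied to $\ell_h-v$), Corollary~\ref{depthcor} then forces $T_h(p-z_h)$ to stay a definite distance inside the normalized section. This contradicts the degeneracy, which under normalization pushes $p$ to the boundary of the section, as in Caffarelli's argument that an extremal point of a contact set sits on the boundary of the normalized sections. Hence $u$ is strictly convex.

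For \eqref{localequation}, strict convexity of $u$ gives that $\partial v(p)=\{x\}$. Upper semicontinuity of $\partial v$ then gives $\partial v(y)\subset B_\sigma(x)\subset\Omega$ for $y$ in a neighborhood $\mathcal N_p$. Strict convexity also gives that $\partial v(y)$ is a singleton for every $y\in\mathcal N_p$: if $\partial v(y)$ contained two points of $\Omega$, then $u$ would be affine on their segment by \eqref{contacts}. Therefore $v\in C^1(\mathcal N_p)$ and $Dv(p)=x$. Finally, \eqref{measure} gives $\mu_v\ge\chi_{\Omega^*}$. Conversely, $\partial v(E)\subset\Omega$ for $E\subset\mathcal N_p$, and by \eqref{contacts} we have $\partial v(E)\subset\partial u^{-1}(E)$, whose measure is $|E\cap\Omega^*|$ by the pushforward relation $(Du)_\#\chi_\Omega=\chi_{\Omega^*}$. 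This yields equality.

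The main obstacle is the uniform doubling bound in the near-boundary configuration, which is exactly where non-convexity of $\Omega^*$ enters.
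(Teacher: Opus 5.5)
Your last paragraph, deriving the local equation from strict convexity via upper semicontinuity of $\partial v$ and \eqref{measure}, matches the paper's Step~4. Your overall contradiction scheme is also the right one: play Corollary~\ref{depthcor} against a section in which $p$ is deep but lies close to the boundary in normalized coordinates. The strict convexity part, however, has a genuine gap exactly at what you call the main obstacle, and the mechanism you propose there cannot work.

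\textbf{Doubling.} The doubling bound is not a property of cone configurations that compactness can supply. Take the single cone $\mathcal C=\{y_1\ge0\}$ and the normalized body $S=B_1(z)$ with $z=-\tfrac35e_1$. The half-dilate $(\tfrac12S)_z=B_{1/2}(z)$ misses $\mathcal C$, while $|S\cap\mathcal C|>0$, so the ratio is $0$. Moreover, when the vertex $T_h(p-z_h)$ approaches $\partial T_h(S_h^c-z_h)$, which is the configuration you need, the cones may open away from the bulk of the section.

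\textbf{Depth at $p$.} Applying \eqref{concavity} to $\ell_h-v$ bounds the depth at the centroid $z_h$, not at $p$.

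\textbf{Boundary proximity.} The claim that the degeneracy ``pushes $p$ to the boundary'' is not automatic. A section tilted by a slope $a\in\partial v(p)$ lies in a half-space $\{a\cdot(y-p)\ge-C\delta\}$. Nothing prevents its centroid from also lying within $O(\delta)$ of that hyperplane if the target mass concentrates near $p$.

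So the doubling, the depth at $p$, and the vanishing normalized distance must all be produced simultaneously by a specific choice of section.

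That is what the paper does, in four moves.
\begin{enumerate}
\item It re-bases at $x_c\in\relint\partial v(p)\cap\Omega$. This gives $p=0\in\ext\{v=0\}$ and $v(y)\ge c|y'|$, where $y'$ is the component of $y$ along $\operatorname{span}\partial v(0)$.
\item For each cone it fixes $\eta_j\in\operatorname{int}\mathcal C_j$ and $a_j\in\partial v(0)$ maximizing $x\cdot\eta_j$. It then forms the tilted sections $Z_j=\{W_j<2\delta\}$ with $\delta=\varepsilon^2$, which lie in $\{a_j\cdot y\ge-C\delta\}$.
\item It chooses $i$ maximizing the cone mass $\mu(Z_i)$, then $j$ maximizing $|Z_i\cap\mathcal C_j|$, and sets $K=Z_i\cap\mathcal C_j$, $q=\bary K$. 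It proves the drift $a_i\cdot q/\delta\to\infty$ by a packing argument. Otherwise $W_j\le C\delta$ on $K$, using the linear growth in $y'$. Since $D^+_{\eta_j}W_j\le o(1)$ by the choice of $a_j$, $N_0$ disjoint translates of $\kappa K$ along $\eta_j$ then fit inside $Z_j\cap\mathcal C_j$, contradicting the maximality of $\mu(Z_i)$.
\item It centers the section at $z_h=\lambda q\in\mathcal C_j$ with height $(n+4)\delta$. Doubling then follows from convexity of the single cone $\mathcal C_j$, together with $\vartheta K\subset S_h^c\subset DZ_i$. The depth at $0$ is at least $2\delta$, against a maximum of at most $C\delta$. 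The normalized distance is at most $C\delta/(a_i\cdot z_h)\to0$. This contradicts Corollary~\ref{depthcor}.
\end{enumerate}
None of these ingredients appear in your proposal, so as it stands it restates the strategy whose execution is the actual content of the theorem.
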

We first show that $\ext\partial u(x)\subset\overline{\Omega^*}$, so that the local cone representation \eqref{cones} applies at each extreme point.
\begin{lemma}\label{extremelocation}
If $x\in\Omega$, $\ext\partial u(x)\subset\overline{\Omega^*}$.
\end{lemma}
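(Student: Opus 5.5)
We argue by contradiction, combining the measure-preserving property of $Du$ with the structure of $\partial u(x)$ as a compact convex set. Fix $x\in\Omega$. The set $\partial u(x)$ is compact and convex. For the extended $u$, every supporting slope lies in $\overline{\co\Omega^*}$: $\tilde u$ is a supremum of affine functions with slopes in $\Omega^*$, so $\partial u(x)\subset\co\overline{\Omega^*}$. Suppose $p\in\ext\partial u(x)$ but $p\notin\overline{\Omega^*}$, and pick $\delta>0$ with $B_{2\delta}(p)\cap\overline{\Omega^*}=\varnothing$.

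The first step exposes $p$ by a small cap. By Straszewicz's theorem, exposed points are dense in $\ext\partial u(x)$, so after moving $p$ slightly (staying outside $\overline{\Omega^*}$) we may assume $p$ is exposed. Choose a unit vector $e$ with $e\cdot q<e\cdot p$ for all $q\in\partial u(x)\setminus\{p\}$. By compactness, there is $\eta>0$ such that
\[
\{q\in\partial u(x):e\cdot q\ge e\cdot p-\eta\}\subset B_\delta(p).
\]

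The second step transfers this to nearby points. Set $x_t=x+te$ for small $t>0$, so $x_t\in\Omega$. By upper semicontinuity of the subdifferential, $\partial u(x_t)$ lies in a small neighbourhood of $\partial u(x)$ for $t$ small. Monotonicity gives $(q_t-q)\cdot e\ge0$ for $q_t\in\partial u(x_t)$ and $q\in\partial u(x)$. Taking $q=p$ yields $e\cdot q_t\ge e\cdot p$. Combining this with the semicontinuity and the exposedness estimate, $\partial u(x_t)\subset B_{2\delta}(p)$ for all $t\in(0,t_0)$. The same argument works for points $x+z$ with $z$ in a small open cone $K$ around $e$: monotonicity gives $(q_z-p)\cdot z\ge0$, and for $z/|z|$ close to $e$ this still forces $\partial u(x+z)$ into $B_{2\delta}(p)$.

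The third step derives the contradiction. The set $W=(x+K)\cap B_{t_0}(x)\subset\Omega$ has positive measure, and $\partial u(W)\subset B_{2\delta}(p)$, which is disjoint from $\overline{\Omega^*}$. But $(Du)_\#\chi_\Omega=\chi_{\Omega^*}$ requires $Du(y)\in\overline{\Omega^*}$ for a.e. $y\in\Omega$. This is a contradiction. Hence every exposed point of $\partial u(x)$ lies in $\overline{\Omega^*}$. Since $\overline{\Omega^*}$ is closed and the exposed points are dense in the extreme points, every extreme point lies in $\overline{\Omega^*}$.

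I expect the main obstacle to be the uniform cone estimate in step two. There we need $\partial u(x+z)\subset B_{2\delta}(p)$ for all $z$ in an open cone, not just along the ray $x+te$. I would handle it by a compactness argument: if it failed, there would be $z_k\to0$ with $z_k/|z_k|\to e'$, $|e'-e|$ small, and $q_k\in\partial u(x+z_k)$ outside $B_{2\delta}(p)$. A subsequence gives $q_k\to q\in\partial u(x)$ with $e'\cdot q\ge e'\cdot p$, and $q\notin B_{2\delta}(p)$. That contradicts exposedness of $p$ in direction $e$, provided the cone aperture is chosen small relative to $\eta$ and $\diam\partial u(x)$.
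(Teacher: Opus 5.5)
Your argument is correct. It rests on the same fact the paper uses, but you prove that fact instead of citing it. The paper's proof is a single appeal to Rockafellar's Theorem~25.6: each extreme point of $\partial u(x)$ is a limit of gradients $Du(x_j)$ with $x_j\to x$, where the $x_j$ are taken from the full-measure set on which $Du\in\Omega^*$.

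Your route first uses Straszewicz's theorem to reduce to exposed points. Then monotonicity and upper semicontinuity show that $\partial u$ maps a whole truncated open cone $x+K$ into $B_{2\delta}(p)$. This is essentially the standard proof of that theorem, via exposed points and the fact that limits of $\partial u(x+t_iy_i)$ with $y_i\to e$ lie in the face of $\partial u(x)$ exposed by $e$, specialized to the present situation.

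What your version buys is an explicit justification for restricting to an \emph{arbitrary} full-measure set. The cone is open and nonempty, so it must meet the set where $Du\in\overline{\Omega^*}$. Theorem~25.6 as stated uses all differentiability points, and the paper leaves this refinement implicit. The cost is length.

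Two small points. First, after replacing $p$ by a nearby exposed point you must shrink $\delta$. Alternatively, as in your final paragraph, prove the claim only for exposed points and conclude by density and the closedness of $\overline{\Omega^*}$. Second, the preliminary inclusion $\partial u(x)\subset\co\overline{\Omega^*}$ is harmless but unnecessary. Your compactness argument for the cone only uses local boundedness of $\partial u$ near $x$ and closedness of its graph.
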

\begin{proof}
By \cite[Theorem~25.6]{Rockafellar}, every $p\in\ext\partial u(x)$ is a limit of gradients $Du(x_j)$ with $x_j\to x$, chosen from the full-measure set where $Du\in\Omega^*$. Thus $p\in\overline{\Omega^*}$.
\end{proof}

\begin{proof}[Proof of Theorem~\ref{strictness}]
We first prove
\begin{equation}\label{uniqueness}
x_0\in\Omega,\quad p\in\operatorname{ext}\partial u(x_0)
\quad\Longrightarrow\quad \partial v(p)=\{x_0\}.
\end{equation}
Suppose that $\partial v(p)$ is not a singleton, and choose $x_c\in\operatorname{relint}\partial v(p)\cap\Omega$. By \eqref{duality} and the supporting inequality at $x_0$, $u(x)=x\cdot p-v(p)$ for $x\in\partial v(p).$ Every supporting affine function at $x_c$ also contacts at $x_0$, so $\partial u(x_c)\subset\partial u(x_0)$ and $p$ remains extreme.

After translation, subtraction of affine functions, and an orthogonal change of coordinates, assume $x_c=p=0, v(0)=0,$ $0\in\operatorname{relint}\partial v(0)\cap\Omega,$ and $\operatorname{span}\partial v(0)=\R^{n_0}\times\{0\}, $ where $1\leq n_0\leq n$. Write $y=(y',y'')$ with $y'\in\R^{n_0}$. Then, there exists a constant $c>0$,
\begin{equation}\label{contactnormalization}
B_c^{n_0}\times\{0\}\subset\partial v(0),\qquad v(y)\geq c|y'|.
\end{equation}
Fix $B_\sigma(0)\subset\Omega$. Since $v(y)\geq\sigma|y|-C$, the set $\{v=0\}=\partial u(0)$ is compact and has $0$ as an extreme point.

\medskip
\noindent\textit{Step 1. Auxiliary sections.}
By Lemma~\ref{extremelocation}, $0\in\overline{\Omega^*}$. Use \eqref{cones} at $0$, and choose
\begin{equation}\label{directions}
\eta_j\in\operatorname{int}\mathcal C_j,\qquad
\eta_{j,1}\ne0,\qquad
a_j\in\partial v(0),\qquad
a_j\cdot\eta_j=\sup_{x\in\partial v(0)}x\cdot\eta_j.
\end{equation}
For $0<\varepsilon<1/4$, set $\delta=\varepsilon^2$ and write
\[
S_\varepsilon^c:=S_\varepsilon^c[v](0)=\{v-\beta_\varepsilon\cdot y<\varepsilon\}.
\]
Boundedness of the centered section and \eqref{concavity} imply $\beta_\varepsilon\in\operatorname{co}\overline{\Omega},$ and $v-\beta_\varepsilon\cdot y\geq-n\varepsilon\quad\text{in }\R^n.$ For $j=1,\ldots,N$, set
\[
\begin{gathered}
W_j(y):=v(y)-[(1-\varepsilon)a_j+\varepsilon\beta_\varepsilon]\cdot y,\\
Z_j:=\{y\in\R^n:W_j(y)<2\delta\},\qquad j=1,\ldots,N.
\end{gathered}
\]
The identity
\[
W_j=(1-\varepsilon)(v-a_j\cdot y)+\varepsilon(v-\beta_\varepsilon\cdot y)
\]
gives $W_j\geq-n\delta$ in $\R^n$ and, on $Z_j$,
\begin{equation}\label{thin}
0\leq v(y)-a_j\cdot y\leq2(n+2)\delta,\qquad
a_j\cdot y\geq-2(n+2)\delta.
\end{equation}
Moreover,
\[
Z_j\subset\{v-\beta_\varepsilon\cdot y<2\varepsilon\}
\subset(3S_\varepsilon^c)_0.
\]
Thus all $Z_j$ are bounded and shrink to $0$ by Lemma~\ref{localization}.

\medskip
\noindent\textit{Step 2. Barycenter estimate.}
Set $\mu(E)=|E\cap\bigcup_{j=1}^N\mathcal C_j|$. Choose $i$ maximizing $\mu(Z_i)$, and then choose $j$ maximizing $|Z_i\cap\mathcal C_j|$. Denote $K=Z_i\cap\mathcal C_j$, $q=\bary K.$ Then
\[
0<|K|,\qquad
\mu(Z_j)\leq\mu(Z_i)\leq N|K|.
\]
We claim that
\begin{equation}\label{drift}
\frac{a_i\cdot q}{\delta}\longrightarrow+\infty.
\end{equation}
Otherwise, after passing to a subsequence and fixing $i,j$, we have $a_i\cdot q\leq C_1\delta$ for some $C_1>0$. Apply \eqref{affine} to the nonnegative affine function $a_i\cdot y+2(n+2)\delta$ on $K$. This gives
\[
\sup_{K}\bigl(a_i\cdot y+2(n+2)\delta\bigr)
\leq(n+1)\bigl(C_1+2(n+2)\bigr)\delta.
\]
By \eqref{thin} and \eqref{contactnormalization}, it follows that $v(y)\leq C\delta,$ and $|y'|\leq C\delta$ for $y\in K.$ Since $a_i-a_j\in\R^{n_0}\times\{0\}$,
\[
W_j=W_i+(1-\varepsilon)(a_i-a_j)\cdot y
\leq C\delta\quad\text{on }K.
\]
Choose a fixed $\kappa\in(0,1)$ sufficiently small that $W_j\leq\delta/2$ on $\kappa K$.

Fix $d>2\kappa C/|\eta_{j,1}|$ and an integer $N_0>N\kappa^{-n}$. Upper semicontinuity of $\partial v$ at $0$, \eqref{directions}, and boundedness of $\beta_\varepsilon$ give $D_{\eta_j}^{+}W_j\leq o(1)$ along $\kappa K+[0,N_0d\delta]\eta_j$. Hence, for sufficiently small $\varepsilon$,
\[
\kappa K+kd\delta\eta_j\subset Z_j\cap\mathcal C_j,
\qquad k=0,\ldots,N_0-1.
\]
Since $d|\eta_{j,1}|\delta>2\kappa C\delta,$ these copies lie in disjoint intervals of the first coordinate, so
\[
N_0\kappa^n|K|
\leq\mu(Z_j)\leq\mu(Z_i)\leq N|K|,
\]
contrary to the choice of $N_0$. This proves \eqref{drift}.

\medskip
\noindent\textit{Step 3. Choosing section and contradiction.}
For $0<\lambda<1/4$, to be fixed below, set $h=(n+4)\delta$ and $z_h=\lambda q$, and take the centred section $S_h^c:=S_h^c[v](z_h)$, written as
\[
S_h^c=\{W_i<\ell\},\qquad \bary S_h^c=z_h,\qquad
\ell(z_h)=W_i(z_h)+h.
\]
Since $-n\delta\leq W_i(z_h)\leq2\lambda\delta$,
\[
4\delta\leq\ell(z_h)\leq(n+5)\delta,\qquad
\sup_{S_h^c}(\ell-W_i)\leq(n+1)(n+4)\delta.
\]
Applying \eqref{affine} to $\ell+n\delta$ on $S_h^c$ gives $\ell\leq C\delta$ there, uniformly in $\lambda$. The same bound holds on $Z_i\setminus S_h^c$, where $\ell\leq W_i<2\delta$. The identity $\ell(z_h)=(1-\lambda)\ell(0)+\lambda\ell(q)$ therefore gives, for a sufficiently small fixed $\lambda=\lambda(n)$, we have $2\delta\leq\ell(0)\leq C\delta$ and $\ell(q)\geq-C\delta.$ Hence
\begin{equation}\label{sectiondepth}
\ell(0)-W_i(0)\geq c_* \sup_{S_h^c}(\ell-W_i)
\end{equation}
for some constant $c_*$ independent of $\varepsilon.$ Applying \eqref{affine} to $C\delta-\ell$ on $K$ gives $\ell\geq-C\delta$ on $K$. Convexity then yields, for fixed dimensional constants $\vartheta>0$ small and $D>1$ large,
\[
\vartheta K\subset S_h^c\cap\mathcal C_j,\qquad S_h^c\subset DZ_i.
\]
Indeed, $W_i(\vartheta y)\leq2\vartheta\delta< \ell(\vartheta y)$ for $y\in K$, and $W_i(y/D)\leq W_i(y)/D<2\delta$ for $y\in S_h^c$.

Thus $S_h^c$ shrinks to $0$. Since $z_h\in\mathcal C_j$,
\begin{equation}\label{sectionmass}
\mu\bigl((\tfrac12 S_h^c)_{z_h}\bigr)
\geq2^{-n}|S_h^c\cap\mathcal C_j|
\geq2^{-n}\vartheta^n|K|
\geq c(n,N)\mu(S_h^c).
\end{equation}
For small $\varepsilon$, these cone masses coincide with target masses by \eqref{cones}.

By \eqref{thin} and $S_h^c\subset DZ_i$, $a_i\cdot y\geq-C\delta$ on $S_h^c$, while $a_i\cdot z_h/\delta\to+\infty$ by \eqref{drift}. Let $T_h$ satisfy \eqref{normalization}. Since $0\in S_h^c$, \eqref{normalization} gives $|T_h^{-T}a_i|\geq c(n)a_i\cdot z_h$. The containing halfspace therefore implies
\begin{equation}\label{sectiondistance}
\operatorname{dist}\bigl(-T_h z_h,
\partial T_h(S_h^c-z_h)\bigr)
\leq\frac{C\delta}{|T_h^{-T}a_i|}
\leq\frac{C\delta}{a_i\cdot z_h}\longrightarrow0.
\end{equation}
Since $0\in\partial v(0)$ and $B_\sigma(0)\subset\Omega$, Corollary~\ref{depthcor}, applied with $\ell_h(y)=\ell(y)+[(1-\varepsilon)a_i+\varepsilon\beta_\varepsilon]\cdot y$, yields \eqref{contradiction} at $p=0$ by \eqref{sectiondepth} and \eqref{sectionmass}, contradicting \eqref{sectiondistance}. Hence \eqref{uniqueness} holds.

\medskip
\noindent\textit{Step 4. Strict convexity and regularity of $v$.}
Suppose $u$ were affine on a nontrivial open segment $I\subset\Omega.$ Take any $x_0$ in the interior of $I.$ Let $p$ be an extreme point of $\partial u(x_0).$ Then $p\in \partial u(x)$ for any $x\in I,$ which implies that $I\subset \partial v(p).$ This contradicts \eqref{uniqueness}. Thus $u$ is locally strictly convex in $\Omega$.

Let $x\in\Omega$ and $p\in\partial u(x)$. If $z\in\partial v(p)$ and $z\ne x$, then \eqref{duality} implies that $u$ is affine on $[x,z]$, contradicting local strict convexity near $x$. Hence $\partial v(p)=\{x\}$. By upper semicontinuity, choose a neighborhood $\mathcal N_p$ with $\partial v(\mathcal N_p)\subset\Omega$. The same argument applies to every $y\in\mathcal N_p$, so $v\in C^1(\mathcal N_p)$ and $Dv(p)=x$. Equation \eqref{measure} now yields \eqref{localequation}.
\end{proof}

\section{A localized monotonicity formula}\label{radial}

In this section, we prove the $C^{1,1-\varepsilon}$ estimate of $v$ needed for the Hausdorff dimension bounds. We first derive a localized almost monotonicity formula by smooth approximation on the target side. Blow-up and iteration along a density-one set of scales then imply
\[
\operatorname{diam}\{u<h\}\le C_\eta h^{1/2-\eta},
\qquad 0<\eta<\tfrac12,
\]
for all sufficiently small $h>0$. This bounds the growth of $u$ from below, and Legendre duality then implies the following estimate for $v$.

\begin{theorem}\label{estimate}
Assume \eqref{source} and that $u$ is locally strictly convex in $\Omega$. Let $x_0\in\Omega$ satisfy $\dim\partial u(x_0)\le1$, and let $p_0\in\ext\partial u(x_0)$. If \eqref{cones} holds at $p_0$, then, for every $\varepsilon\in(0,1)$, there exist $C_\varepsilon,r_\varepsilon>0$ such that
\begin{equation}\label{gradient}
|Dv(y)-x_0|\le C_\varepsilon|y-p_0|^{1-\varepsilon}
\qquad y\in B_{r_\varepsilon}(p_0).
\end{equation}
\end{theorem}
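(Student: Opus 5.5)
We normalize by translation and subtraction of affine functions: $x_0=0$, $p_0=0$, $u(0)=0$, $u\ge0$, $v(0)=0$. Since $p_0$ is extreme in $\partial u(0)$, we may also subtract a linear function on the $v$ side so that $0\in\ext\{v=0\}$. The hypothesis $\dim\partial u(0)\le1$ says that $\{v=0\}=\partial u(0)$ is either $\{0\}$ or a segment with endpoint $0$. By Theorem~\ref{strictness} (strict convexity is assumed, so \eqref{uniqueness} applies), $v\in C^1$ near $0$ with $Dv(0)=0$, and $\mu_v=\chi_{\Omega^*}dy$ near $0$. By \eqref{cones}, $\Omega^*$ coincides near $0$ with a finite union of convex cones.

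The goal is to prove the growth estimate
\[
\operatorname{diam}\{u<h\}\le C_\eta h^{1/2-\eta}\qquad\text{for small }h>0,
\]
and then to dualize. The plan is as follows.

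First, approximate $v$ by smooth dual pairs. Mollify the target density on the cone union, or solve Dirichlet problems on the sections $S^c_h[v](0)$ with smoothed data. This gives smooth convex $v_k\to v$ locally uniformly, with $\det D^2v_k\approx\chi_{\Omega^*}$ mollified, together with uniform $L^1$ energy bounds on $D^2v_k$ inside sections, obtained from the Monge--Amp\`ere measure bound and convexity.

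For each $v_k$, derive a localized version of the Collins--Tong monotonicity identity \cite[Theorem~3.1]{CT}. The quantity is a scale-invariant energy $\Phi(r)$, for instance the normalized integral of $y\cdot Dv_k-2v_k$ against a radial cutoff, or equivalently the Legendre-side quantity measuring deviation from $2$-homogeneity on $B_r$. Show that it is almost monotone, with error terms supported where the cutoff varies, and that these errors are controlled by the energy bounds. Pass to the limit $k\to\infty$ to get almost monotonicity for $v$ itself.

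Next, apply the Chen--Li--Wang good/bad scale dichotomy \cite[Propositions~7.2--7.3]{CLiW}. Call a dyadic scale good if the rescaled section is close to the section of a homogeneous solution on the tangent cone union. At good scales, blow-up and compactness, using that the limit transports onto a union of cones from a neighbourhood of $0$ in the source, give a definite improvement: the normalized section of $u$ at height $h/4$ is comparable to one half of that at height $h$ up to a factor $1+o(1)$. Monotonicity shows that bad scales have zero upper density. Iterating along the density-one set of good scales yields the bound $\operatorname{diam}\{u<h\}\le C_\eta h^{1/2-\eta}$, that is, $u(x)\ge c_\eta|x|^{2/(1-2\eta)}$ near $0$ (in the minimal directions, using $u\ge0$ and strict convexity). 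Here the hypothesis $\dim\partial u(0)\le1$ enters: in the segment case the sections of $u$ are still controlled because $p_0$ is an endpoint, so only one-sided degeneration can occur. This has to be handled by using $\{v=0\}$ in the blow-up limit.

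Finally, dualize. A lower bound $u(x)\ge c|x|^{1+\alpha}$ with $1+\alpha=2/(1-2\eta)$ gives $v(y)\le C|y|^{1+1/\alpha}$ for $y$ near $0$. Together with convexity and $v\ge0$ (after subtracting the affine part), this gives $|Dv(y)|\le C|y|^{1/\alpha}$ via the standard estimate $|Dv(y)|\le \sup_{B_{2|y|}}v/|y|$. Choosing $\eta$ small makes $1/\alpha\ge1-\varepsilon$, which is \eqref{gradient}.

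The main obstacle is the middle step: establishing the localized monotonicity formula and its passage to the limit, and especially proving a quantitative improvement at good scales for the cone-union target, where the blow-up limits need not be unique. I would first try to show that any homogeneous blow-up of $v$ at $0$ is quadratic-like, i.e.\ has Monge--Amp\`ere measure the cone union and $C^{1}$ gradient vanishing at $0$. The rate would then follow from a compactness-and-contradiction argument at each good scale.
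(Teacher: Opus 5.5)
Your outline follows the paper's architecture: smooth approximation of $v$, a Collins--Tong type formula, good/bad scales, a $2$-homogeneous blow-up giving $\operatorname{diam}S_{\theta h}\le(\sqrt\theta+o(1))\operatorname{diam}S_h$ at good scales, iteration, and Legendre duality. Your dualization via $v\le C|y|^{1+1/\alpha}$ is fine. However, the step that carries the proof is left open, and what you suggest for it would not work.

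To make the bad scales have zero density, the total variation of $\log F$ in $t=-\log h$ over $[t_0,T]$ must be $o(T)$. So the errors in the almost-monotonicity must be $o(M(h))$, and being ``controlled by the energy bounds'' is not enough. A quantity on balls $B_r$ with a radial cutoff varying at the same scale $r$ produces cutoff errors of the same order as the main term; balls are also not adapted to the affine invariance of sections.

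The paper instead uses $F(h)=h^{-n/2}M(h)$, with $M(h)=\int_{\{x\cdot Du<h\}}\zeta(Du)\,dx$, and the energy $E=\int_{\{x\cdot Du<h\}}\zeta(Du)\,|H^{1/2}x-H^{-1/2}Du|^2\,dx$. Adding the divergence identities for $x\,\zeta(Du_k)$ on $\{x\cdot Du_k<h\}$ and for $y f_k\zeta$ on $\{y\cdot Dv_k<h\}$ gives $-4h^{n/2+1}F_k'=E_k'-R_k-\Gamma_k$. The term $\Gamma_k=\int\zeta\,1_{\{y\cdot Dv_k<h\}}\,y\cdot Df_k\,dy$ is negligible precisely because $\chi_{\Omega^*}$ is $0$-homogeneous about $p_0$. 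This is where \eqref{cones} enters. It does not enter through a classification of blow-ups, which is never needed: any $2$-homogeneous limit yields the contraction, so non-uniqueness is harmless.

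The second missing idea is the segment case $L=[0,p_1]$. There $Dv=0$ on all of $L$, so $\{y\cdot Dv<h\}$ contains a neighbourhood of the whole segment for every $h$. Near $p_1$ the density is not homogeneous about $0$, so the error is not small. The paper therefore inserts a \emph{fixed} cutoff $\zeta(y)=\psi(\tau\cdot y)^2$ separating $0$ from $p_1$, and proves $|R(h)|\le C\sqrt{\delta_h}\,M(h)$ with $\delta_h=h/a_h$ and $-a_h\tau\in\partial S_h$. The argument has three ingredients:
\begin{itemize}
\item On each $\tau$-fibre, the part of $\{x\cdot Du<h\}$ where $D_\tau u>b$ has length at most $h/b$.
\item Convexity gives $|\operatorname{proj}_{\tau^\perp}S_h|\lesssim|S_h|/a_h$.
\item The part of $R$ involving $H$ is handled by Cauchy--Schwarz against $E\le ChM$ together with the substitution $s=D_\tau u$ along fibres.
\end{itemize}
Here $\delta_h\to0$ exactly because $0$ is an extreme point of $L$, so that $u(-t\tau)=o(t)$. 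Moreover, $|hA_h\tau|\lesssim\delta_h$ forces the cutoff to equal $1$ in the blow-up. Your remark that ``one-sided degeneration'' is handled ``by using $\{v=0\}$ in the blow-up limit'' does not provide this mechanism.

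Finally, ``monotonicity shows bad scales have zero density'' hides the real link, which you assert rather than prove. One must show that small variation of $\log F$ near a scale forces $E(sh_j)/(h_jM(h_j))\to0$. Through $|D^2w\,x-Dw|^2\le\operatorname{tr}D^2w\,|(D^2w)^{1/2}x-(D^2w)^{-1/2}Dw|^2$, this gives the Euler identity $x\cdot D\tilde u_\infty=2\tilde u_\infty$ for the normalized limit.
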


\subsection{Monotonicity}
\label{secmon}

Our argument is based on the monotonicity formula of Collins--Tong~\cite[Theorem~3.1]{CT} and the good--bad scale method of Chen--Li--Wang~\cite[Propositions~7.2--7.3]{CLiW}. We approximate the inverse potential and introduce a cutoff near the chosen extreme point of the subdifferential to obtain a localized almost monotonicity formula.

After translation and subtraction of supporting affine functions, assume $x_0=p_0=0$ and $u(0)=v(0)=0$, so that $u,v\ge0$. Set $L=\partial u(0)$. Local strict convexity and Legendre duality give
\[
\{u=0\}=\{0\},\qquad
\{v=0\}=L,\qquad
\partial v(p)=\{0\}\quad(p\in L).
\]
Since $\dim L\le1$ and $0\in\ext L$, either $L=\{0\}$ or $L=[0,p_1]$ for some $p_1\ne0$. We first consider the latter case and set $\tau=p_1/|p_1|$.

By Theorem \ref{strictness}, there is a neighborhood $\mathcal N$ of $L$ such that $v\in C^1(\mathcal N)$, $Dv(\mathcal N)\Subset\Omega$, and $\mu_v=\chi_{\Omega^*}\,dy$ in $\mathcal N$. Choose a thin ellipsoid $D$ with $L\Subset D\Subset\mathcal N$. Note that $\min_{\partial D}v>0$. Choose $r^*>0$ so that $B_{4r^*}\Subset D$ and $\Omega^*\cap B_{4r^*}=\mathcal C\cap B_{4r^*}$ up to a set of measure zero, where $\mathcal C=\bigcup_j\mathcal C_j$ and the cones are those in \eqref{cones} at $0$.

Fix $0<b<\frac1{16}\min\{r^*,|p_1|\}$ and $\psi\in C^\infty(\mathbb R;[0,1])$ such that
\[
\psi=1\ \text{on }(-\infty,b],\qquad \psi=0\ \text{on }[2b,\infty),\qquad -C/b\le\psi'\le0,
\]
and let $\zeta(y)=\psi(\tau\cdot y)^2.$ Choose $r'>0$ sufficiently small that $B_{4r'}\Subset\Omega$ and $\partial u(B_{4r'})\subset D\cap(L+B_{b/8})$. Then $p\in\partial u(B_{4r'})$ and $\zeta(p)>0$ imply $|p|<r^*/2$.

Set $S_h=\{u<h\}.$ Let $-a_h\tau\in\partial S_h$ and set $\delta_h=h/a_h$. Since $\max_{p\in L}(-p\cdot\tau)=0$, we have $\delta_h\to0$ as $h \rightarrow 0$. Define $\Psi(x)=x\cdot Du(x)$, and
\[
M(h)=\int_{\{\Psi<h\}}\zeta(Du)\,dx,\qquad F(h)=h^{-n/2}M(h).
\]
If $L=\{0\}$, choose $\mathcal N,D,r^*$ by the same construction. Set $\zeta\equiv1$ and $\delta_h=0$, and choose $r'>0$ so that $B_{4r'}\Subset\Omega$ and $\partial u(B_{4r'})\subset D\cap B_{r^*/2}$.

We now establish an almost monotonicity formula for $F$. Since $u$ need not be differentiable, we derive the monotonicity formula by smooth approximation, avoiding a decomposition of the singular Hessian measure. For the smooth approximations $u_k$, define
\[
E_k(h)=\int_{\{x\cdot Du_k<h\}}\zeta(Du_k)\left|(D^2u_k)^{1/2}x-(D^2u_k)^{-1/2}Du_k\right|^2\,dx.
\]
The radial energy $E$ is defined as a locally uniform subsequential limit of $E_k$, whose existence is proved below.

\begin{proposition}\label{monotonicity}
For some $h_0>0$, the radial energy $E$ exists, is nonnegative and nondecreasing, and satisfies
\begin{equation}\label{radialbound}
0\le E(h)\le C_nhM(h).
\end{equation}
Moreover, $M,F,E$ are locally absolutely continuous on $(0,h_0)$, and there exists $R\in L^1_{\mathrm{loc}}(0,h_0)$ such that, for almost every $h\in(0,h_0)$,
\begin{equation}\label{identity}
-4h^{n/2+1}F'(h)=E'(h)-R(h),\qquad |R(h)|\le C\sqrt{\delta_h}M(h).
\end{equation}
\end{proposition}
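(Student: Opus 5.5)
We approximate on the target side. Choose smooth uniformly convex functions $v_k$ on a neighbourhood of $\overline D$ with $v_k\to v$ in $C^1(\overline D)$ and $\det D^2v_k=g_k$, where $g_k$ are smooth positive mollifications of $\chi_{\Omega^*}$ (plus $1/k$) converging to $\chi_{\Omega^*}$ in $L^1(D)$. These can be obtained by solving the Dirichlet problem on $D$ with boundary data $v+1/k$ and using stability of Monge--Amp\`ere measures together with the $C^1$ regularity of $v$ from Theorem~\ref{strictness}. We then set $u_k=v_k^*$ on $B_{4r'}$. Since $Dv\in C^0$ and $\partial u(B_{4r'})\subset D$, for large $k$ the maps $Du_k$ are smooth diffeomorphisms of $B_{4r'}$ into $D$ with $Du_k\to Du$ a.e. and $\det D^2u_k=1/g_k(Du_k)$. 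Put $\Psi_k=x\cdot Du_k$, $M_k(h)=\int_{\{\Psi_k<h\}}\zeta(Du_k)\,dx$ and $F_k=h^{-n/2}M_k$. Because $\{u<h\}$ shrinks to $0$, we have $\{\Psi<h\}\subset S_{h}\Subset B_{r'}$ for $h<h_0$, and the same holds for $\Psi_k$ uniformly in $k$.

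The core is the Collins--Tong computation carried out for smooth $u_k$ with the weight $\zeta(Du_k)$. By the coarea formula, $M_k'(h)=\int_{\{\Psi_k=h\}}\zeta/|D\Psi_k|$. We write $D\Psi_k=Du_k+D^2u_k\,x$ and apply the divergence theorem on $\{\Psi_k<h\}$ to the field $\zeta(Du_k)\,x$, changing variables $y=Du_k(x)$ where useful. The pointwise identity
\[
|(D^2u_k)^{1/2}x-(D^2u_k)^{-1/2}Du_k|^2=x\cdot D^2u_k x-2\Psi_k+Du_k\cdot(D^2u_k)^{-1}Du_k
\]
together with $\operatorname{div}x=n$ should then give $-4h^{n/2+1}F_k'(h)=E_k'(h)-R_k(h)$. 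In this identity $R_k$ collects all terms containing $D\zeta(Du_k)$; these come from differentiating the weight. Since $D\zeta=2\psi\psi'\tau$ is supported where $b\le\tau\cdot Du_k\le2b$ and has size $C/b$, each such term carries a factor like $\tau\cdot x$ or $(D^2u_k)^{-1}\tau\cdot Du_k$.

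The bound $|R_k|\le C\sqrt{\delta_h}M_k$ is the step I expect to be the main obstacle. On $\{\Psi<h\}\subset S_h$, the extreme point property $\max_L(-p\cdot\tau)=0$ and the definition of $a_h$ give $-\tau\cdot x\le a_h$. The product $|\tau\cdot x|\,|Du_k|$ is controlled by $h$, and in the region where $\zeta'\neq0$, where $|Du|\approx b$, by $h/b$. I would try Cauchy--Schwarz: split the offending term as (energy integrand)$^{1/2}$ times a factor of order $\sqrt{h\cdot h/a_h}$, then absorb using \eqref{radialbound}. This yields $\sqrt{\delta_h}$ up to constants depending on $b$. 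In the case $L=\{0\}$ we have $\zeta\equiv1$, so $R\equiv0$.

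Uniform bounds then let us pass to the limit. Integrating the identity and using $|x|\,|Du_k|\lesssim\Psi_k$ on star-shaped sublevel sets gives $E_k(h)\le C_nhM_k(h)$, by the Collins--Tong inequality $E\le C h M$, uniformly in $k$. Hence $E_k$ is locally bounded. The identity expresses $E_k'\ge -C M_k$, and $M_k$ is monotone, so $E_k$ is of locally bounded variation with equi-controlled derivative in $L^1$ on compact subintervals. By Helly's theorem and Arzel\`a--Ascoli (the derivatives of $F_k$ are equi-integrable by the identity) we extract a locally uniform limit $E$. We also show $M_k\to M$ locally uniformly, since $\Psi_k\to\Psi$ a.e. and $|\{\Psi=h\}|=0$ for a.e. $h$. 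Monotonicity of $E$ requires $E_k'\ge0$; this holds because, by coarea, $E_k'(h)$ is a boundary integral of a nonnegative integrand. Finally, $R_k$ converges weakly in $L^1_{\mathrm{loc}}$ to some $R$ satisfying the same bound, which gives \eqref{identity} and the absolute continuity of $M,F,E$.
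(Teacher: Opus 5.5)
You follow the paper's route: target-side approximation by Dirichlet problems, a weighted Collins--Tong computation, Cauchy--Schwarz for the cutoff error, then compactness. But the steps that carry the content have genuine gaps.

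\textbf{The smooth identity is missing a term, and the cone hypothesis is never used.} The identity $-4h^{n/2+1}F_k'=E_k'-R_k$, with $R_k$ made only of $D\zeta$-terms, is false. With $H_k=D^2u_k$, the divergence theorem for $\zeta(Du_k)x$ on $\{\Psi_k<h\}$ gives only $nM_k+\int D\zeta(Du_k)\cdot H_kx\,dx=\frac{d}{dh}\int_{\{\Psi_k<h\}}\zeta(Du_k)(\Psi_k+x\cdot H_kx)\,dx$. To reach $\frac{d}{dh}\int\zeta(Q_k+4\Psi_k)=E_k'+4hM_k'$ you also need the term $Du_k\cdot H_k^{-1}Du_k$. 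That term comes from a second divergence identity, for $y\,g_k\zeta$ on $\{\Psi_k^*<h\}$ with $\Psi_k^*=y\cdot Dv_k$. This identity contains, besides a $D\zeta$-term, the density term $\Gamma_k(h)=\int\zeta\,1_{\{\Psi_k^*<h\}}\,y\cdot Dg_k\,dy$, which your formula omits. It is negligible only because $\chi_{\Omega^*}$ is homogeneous of degree zero near $p_0$. This is exactly where the hypothesis \eqref{cones} at $p_0$ enters, and your proposal never uses it. The paper takes $f_k$ radially constant outside $B_{2\rho_k}$, so that $|y\cdot Df_k|\le C1_{B_{2\rho_k}}$ and $|\Gamma_k|\le C\rho_k^n$. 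With ordinary mollifiers you would have to check by hand that $y\cdot Dg_k$ is bounded and concentrated near $\partial\mathcal C$, using $z\cdot\nu(z)=0$ on the cone boundary. You also need to renormalize so that $v_k(0)=Dv_k(0)=0$; otherwise $\Psi_k\ge0$ and the star-shapedness you invoke are lost.

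\textbf{The remaining estimates are unresolved or rest on false inequalities.} The estimate for $R_k$, which you flag as the main obstacle, is not carried out, and the terms are misidentified. They are $2\psi\psi'\,\tau\cdot H_kx$ and $2\psi\psi'\,\tau\cdot Du_k$, with $\psi,\psi'$ evaluated at $D_\tau u_k$. The first contains the Hessian, which has no uniform bound. A pointwise bound such as $|\tau\cdot x|\,|Du_k|\lesssim h$ is not available (only $x\cdot Du_k<h$ is), and in any case it cannot control that factor. The missing idea has four parts. First, split $H_kx+Du_k=(H_kx-Du_k)+2Du_k$. Second, use $|\tau\cdot(H_kx-Du_k)|\le(\tau\cdot H_k\tau)^{1/2}Q_k^{1/2}$ and Cauchy--Schwarz against $E_k$. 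Third, bound $\int_{S_{k,h}}|\psi'(D_\tau u_k)|^2\,\tau\cdot H_k\tau\,dx$ by the substitution $s=D_\tau u_k$ on lines parallel to $\tau$; this gives $\frac Cb|\operatorname{proj}_{\tau^\perp}S_{k,h}|\le\frac{C}{ba_{k,h}}|S_{k,h}|$, via the projection estimate coming from $-a_{k,h}\tau\in\partial S_{k,h}$. Fourth, the $2Du_k$ part lives on $\{\zeta(Du_k)\ne1\}$, whose measure in $\{\Psi_k<h\}$ is at most $C\delta_{k,h}|S_{k,h}|/b$ by a fibre argument.

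Your energy bound rests on $|x|\,|Du_k|\lesssim\Psi_k$, which fails for eccentric sections. The bound $E_k\le C_nhM_k$ (up to the density error) comes instead from integrating the divergences of $x(h-\Psi_k)$ and $y\,g_k\zeta(h-\Psi_k^*)$, using $y\cdot D\zeta\le0$.

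Finally, equicontinuity of $F_k$ cannot come ``from the identity'': the identity contains $E_k'$, so that argument is circular. You need the independent bound $|F_k'|\le C_n/h$, which follows from $\Psi_k(\lambda x)\le\lambda\Psi_k(x)$ for $0<\lambda<1$. This bound is what gives local absolute continuity of $M$, $F$ and $E$ in the limit.
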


\begin{proof}
\medskip
\noindent\textit{Step 1. Smooth approximation.}
Choose smooth densities $0<f_k\le1$ on $\overline D$ such that $f_k\to\chi_{\Omega^*}$ in $L^1(D)$ and, for some $\rho_k \rightarrow 0$,
\begin{equation}\label{radialdensity}
|y\cdot Df_k(y)|\le C1_{B_{2\rho_k}}(y)
\qquad(y\in B_{r^*}).
\end{equation}
Indeed, let $g_k$ be smooth positive approximations of $\chi_{\mathcal C\cap\mathbb S^{n-1}}$ on $\mathbb S^{n-1}$. For $2\rho_k\le |y|<r^*$, set $f_k(y)=g_k\left(\frac{y}{|y|}\right).$ Inside $B_{2\rho_k}$, modify $f_k$ by a radial cutoff so that it is constant near the origin and $|y\cdot Df_k|\le C$. Outside $B_{r^*}$, complete the construction smoothly so that $f_k\to\chi_{\Omega^*}$ in $L^1(D)$.

Approximate $v|_{\partial D}$ by smooth boundary data $\varphi_k$ and solve $\det D^2\hat v_k=f_k$ in $D$ by \cite[Theorem~1.1]{CNS}. By Caffarelli's stability estimate \cite[Lemma~1.3]{C96},
\[
\begin{aligned}
\|\hat v_k-v\|_{L^\infty(D)}
&\le \|\varphi_k-v\|_{L^\infty(\partial D)}
+C_D\|f_k^{1/n}-\chi_{\Omega^*}\|_{L^n(D)}\\
&\le \|\varphi_k-v\|_{L^\infty(\partial D)}
+C_D\|f_k-\chi_{\Omega^*}\|_{L^1(D)}^{1/n}
\longrightarrow0.
\end{aligned}
\]
Since $Dv(0)=0,$ by the above convergence and convexity we have $D\hat v_k(0)\to0$. Define
\[
v_k(y)=\hat v_k(y)-\hat v_k(0)-D\hat v_k(0)\cdot y,
\qquad
u_k(x)=\sup_{y\in\overline D}\{x\cdot y-v_k(y)\}.
\]
Then $v_k\ge0$, $v_k(0)=Dv_k(0)=0$, and $v_k\to v$ uniformly. Since $\min_{\partial D}v>0$, for all sufficiently small $|x|$ the maximizer defining $u_k(x)$ lies in $D$, uniformly in $k$. By taking $k$ large and decreasing $r'$, the functions $u_k$ are smooth on $B_{4r'}$ and satisfy
\[
\begin{gathered}
u_k(0)=Du_k(0)=0,\qquad \det D^2u_k=\frac1{f_k(Du_k)}\ge1,\\
u_k\to u\quad\text{locally uniformly in }B_{4r'}.
\end{gathered}
\]

Here the restricted dual of $v$ agrees with $u$ on this ball, since $\partial u(B_{4r'})\Subset D$. By convexity and local uniform convergence, for sufficiently small $r'$ and sufficiently large $k$, we have
\[
Du_k(B_{4r'})\subset
\begin{cases}
L+B_{b/4}, & L=[0,p_1],\\
B_{r^*/2}, & L=\{0\}.
\end{cases}
\]
Since $\zeta\equiv1$ in the singleton case, in both cases we obtain
\begin{equation}\label{approxcutoff}
x\in B_{4r'},\quad\zeta(Du_k(x))>0
\quad\Longrightarrow\quad |Du_k(x)|<r^*/2.
\end{equation}

\medskip
\noindent\textit{Step 2. Section and mass estimates.}
Let
\[
S_{k,h}=\{u_k<h\},\qquad
\Psi_k=x\cdot Du_k(x),\qquad
\Psi_k^*=y\cdot Dv_k(y),
\]
and define $M_k,F_k$ as above with $u_k$ in place of $u$. In the segment case let $-a_{k,h}\tau\in\partial S_{k,h}$ and set $\delta_{k,h}=h/a_{k,h}$. In the singleton case set $\delta_{k,h}=0$.

Since $\{u=0\}=\{0\}$, local uniform convergence and convexity imply that $\overline{S_{k,4h_0}}\subset B_{r'}$ for a common $h_0>0$. By convexity and the section volume estimate \cite[Lemma~2]{Caffarelli}, we have
\begin{equation}\label{halfsec}
\frac12S_{k,h}\subset\{\Psi_k<h\}\subset S_{k,h},
\qquad |S_{k,h}|\le C_nh^{n/2}.
\end{equation}

In the segment case, since $0\in S_{k,h}$ and $-a_{k,h}\tau\in\partial S_{k,h}$, convexity gives $\frac12S_{k,h}+[-a_{k,h}/2,0]\tau \subset S_{k,h}.$ Consequently, for every $x'\in\frac12\operatorname{proj}_{\tau^\perp}S_{k,h}$,
\[
\mathcal L^1\{t:x'+t\tau\in S_{k,h}\}
\ge\frac{a_{k,h}}2.
\]
Integrating over $x'$, we have
\[
|S_{k,h}|
\ge\frac{a_{k,h}}{2^n}
|\operatorname{proj}_{\tau^\perp}S_{k,h}|.
\]

Next, fix $x'\in\operatorname{proj}_{\tau^\perp}S_{k,h}$. Since $u_k\le\Psi_k$ and $\zeta(Du_k)\ne1$ implies $D_\tau u_k>b$, the conditions
\[
\Psi_k(x'+t\tau)<h,\qquad
\zeta(Du_k(x'+t\tau))\ne1
\]
imply
\[
u_k(x'+t\tau)<h,\qquad
D_\tau u_k(x'+t\tau)>b.
\]
By convexity, any two such values $t_1<t_2$ satisfy
\[
b(t_2-t_1)
\le u_k(x'+t_2\tau)-u_k(x'+t_1\tau)<h.
\]
Thus the set of such $t$ has measure at most $h/b$. Integrating over $x'$ and using the preceding estimate, we obtain
\begin{equation}\label{fibers}
\begin{gathered}
|\operatorname{proj}_{\tau^\perp}S_{k,h}|
\le\frac{2^n}{a_{k,h}}|S_{k,h}|,\\
|\{\Psi_k<h,\ \zeta(Du_k)\ne1\}|
\le\frac{2^n}{b}\delta_{k,h}|S_{k,h}|.
\end{gathered}
\end{equation}

Moreover, $\sup_k\delta_{k,h}\to0$ as $h \rightarrow 0$. Indeed, for each fixed $h\in(0,h_0)$, uniform convergence of $u_k$ implies $\delta_{k,h}\to\delta_h$. By convexity, $\delta_{k,h}$ is nondecreasing in $h$. Since $\delta_h\to0$ and $\delta_{k,h}\to0$ for each fixed $k$ by $Du_k(0)=0$, the conclusion follows. Decreasing $h_0$, by \eqref{halfsec} and \eqref{fibers} we have
\begin{equation}\label{approximatemass}
c_n|S_{k,h}|\le M_k(h)\le|S_{k,h}|\le C_nh^{n/2}.
\end{equation}
For $\dim\partial u(x_0)=0,$ this follows directly with $\zeta=1$.

At $y=Du_k(x)$, duality gives $Dv_k(y)=x$, $\Psi_k^*(y)=\Psi_k(x)$, and $dx=f_k(y)\,dy$. Consequently,
\[
M_k(h)=\int_D\zeta(y)1_{\{\Psi_k^*<h\}}f_k(y)\,dy.
\]
If $\zeta(y)>0$ and $\Psi_k^*(y)<h_0$, then $u_k(Dv_k(y))\le\Psi_k^*(y)<h_0$. Thus $Dv_k(y)\in B_{r'}$ and by \eqref{approxcutoff} we have $y$ in $B_{r^*/2}$. All weighted target fields below are therefore compactly supported in this ball.

For each $h\in(0,h_0)$, uniform convergence of $u_k$ implies that $S_{k,h}\to S_h$ in Hausdorff distance. By convexity, $\Psi(r_2\omega)\ge\frac{r_2}{r_1}\Psi(r_1\omega)$ for $r_2>r_1>0$ at differentiability points. Hence $|\{\Psi=h\}|=0$ for $h>0$. Since $Du_k\to Du$ almost everywhere, we obtain $M_k(h)\to M(h)$. In the segment case, section convergence also gives $a_{k,h}\to a_h$. By monotonicity in $h$ and continuity of $a_h$, this convergence is locally uniform, and so is $\delta_{k,h}\to\delta_h$. In the singleton case, $\delta_{k,h}=\delta_h=0$. Passing to the limit gives
\begin{equation}\label{mass}
\tfrac12S_h\subset\{\Psi<h\}\subset S_h,\qquad
c_n|S_h|\le M(h)\le|S_h|\le C_nh^{n/2}.
\end{equation}

\medskip
\noindent\textit{Step 3. The radial identity.}
Set $H_k=D^2u_k$, $G_k=D^2v_k$, and
\[
Q_k=|H_k^{1/2}x-H_k^{-1/2}Du_k|^2,
\qquad
E_k(h)=\int_{\{\Psi_k<h\}}\zeta(Du_k)Q_k\,dx.
\]
In particular, $E_k$ is nonnegative and nondecreasing. We shall first derive a differential identity for these smooth approximations.

By a direct computation, we have $D\Psi_k=Du_k+H_kx,$ and $D\Psi_k^*=Dv_k+G_ky.$ Since $H_k,G_k$ are positive definite, on the positive level sets we have
\[
x\cdot D\Psi_k=\Psi_k+x\cdot H_kx>0,
\qquad
y\cdot D\Psi_k^*=\Psi_k^*+y\cdot G_ky>0.
\]
Thus, for every $h\in(0,h_0)$, the level sets $\{\Psi_k=h\}$ and $\{\Psi_k^*=h\}$ are smooth hypersurfaces. The source sublevel sets lie in $B_{r'}$ by Step~2. Decreasing $h_0$ if necessary, uniform convergence and $\min_{\partial D}v>0$ ensure that $v_k>h_0$ on $\partial D$. Since $v_k\le\Psi_k^*$, the target sublevel sets are compactly contained in $D$ as well.

The coarea formula gives
\[
M_k'(h)=\int_{\{\Psi_k=h\}}\frac{\zeta(Du_k)}{|D\Psi_k|}\,d\HH^{n-1},
\]
with analogous formulas for other smooth integrands and on the target side. Apply the divergence theorem to $x\zeta(Du_k)$ on $\{\Psi_k<h\}$. Using the outward normal $D\Psi_k/|D\Psi_k|$ and the preceding coarea formula, we obtain
\[
\begin{aligned}
&nM_k(h)+\int_{\{\Psi_k<h\}}D\zeta(Du_k)\cdot H_kx\,dx\\
&\qquad=\frac{d}{dh}\int_{\{\Psi_k<h\}}
\zeta(Du_k)(\Psi_k+x\cdot H_kx)\,dx.
\end{aligned}
\]
Similarly, by applying the divergence theorem to $yf_k\zeta$ we have
\[
\begin{aligned}
&nM_k(h)+\int_{\{\Psi_k^*<h\}}
\bigl(f_ky\cdot D\zeta+\zeta y\cdot Df_k\bigr)\,dy\\
&\qquad=\frac{d}{dh}\int_{\{\Psi_k^*<h\}}
f_k\zeta(\Psi_k^*+y\cdot G_ky)\,dy.
\end{aligned}
\]
Recall that under $y=Du_k(x)$,
\[
\Psi_k^*(y)=\Psi_k(x),\qquad
G_k(y)=H_k(x)^{-1},\qquad f_k(y)\,dy=dx.
\]
To collect the terms where a derivative falls on the cutoff or the density, write
\[
\begin{aligned}
R_k(h)&=\int_{\{\Psi_k<h\}}
D\zeta(Du_k)\cdot(H_kx+Du_k)\,dx,\\
\Gamma_k(h)&=\int_D\zeta(y)1_{\{\Psi_k^*<h\}}
y\cdot Df_k(y)\,dy.
\end{aligned}
\]
Changing variables in the target integral and adding the two divergence identities, we find
\[
\begin{aligned}
2nM_k(h)+R_k(h)+\Gamma_k(h)
&=\frac{d}{dh}\int_{\{\Psi_k<h\}}
\zeta(Du_k)(Q_k+4\Psi_k)\,dx\\
&=E_k'(h)+4hM_k'(h).
\end{aligned}
\]
Here we used
\[
Q_k=x\cdot H_kx+Du_k\cdot H_k^{-1}Du_k-2\Psi_k,
\]
and, in the last equality, the fact that $\Psi_k=h$ on the level set in the coarea formula. Since
\[
4h^{n/2+1}F_k'(h)=4hM_k'(h)-2nM_k(h),
\]
we have proved
\begin{equation}\label{smoothwindow}
-4h^{n/2+1}F_k'(h)=E_k'(h)-R_k(h)-\Gamma_k(h).
\end{equation}
Finally, the support property in Step~2 and \eqref{radialdensity} imply $|\Gamma_k(h)|\le C\int_{B_{2\rho_k}}dy\le C\rho_k^n.$ Thus the error caused by differentiating $f_k$ tends to zero uniformly in $h$.

\medskip
\noindent\textit{Step 4. Energy and error estimates.}
We first bound the two quadratic terms in $Q_k$. The vector field $x(h-\Psi_k)$ vanishes on $\{\Psi_k=h\}$. Its divergence in $\{\Psi_k<h\}$ is
\[
\operatorname{div}\bigl(x(h-\Psi_k)\bigr)
=nh-(n+1)\Psi_k-x\cdot H_kx.
\]
Integrating, and using $\Psi_k\ge0$, gives
\[
\begin{aligned}
\int_{\{\Psi_k<h\}}x\cdot H_kx\,dx
&=nh|\{\Psi_k<h\}|-(n+1)\int_{\{\Psi_k<h\}}\Psi_k\,dx\\
&\le nh|S_{k,h}|.
\end{aligned}
\]
On the target side, integrate the divergence of $yf_k\zeta(h-\Psi_k^*)$ on $\{\Psi_k^*<h\}$. Again the boundary term vanishes, so
\[
\begin{aligned}
\int_{\{\Psi_k^*<h\}}f_k\zeta\,y\cdot G_ky\,dy
&=nhM_k(h)-(n+1)\int_{\{\Psi_k^*<h\}}f_k\zeta\Psi_k^*\,dy\\
&\quad+\int_{\{\Psi_k^*<h\}}
(\zeta y\cdot Df_k+f_ky\cdot D\zeta)(h-\Psi_k^*)\,dy.
\end{aligned}
\]
The second term on the right is nonpositive. In the segment case,
\[
y\cdot D\zeta(y)
=2(\tau\cdot y)\psi(\tau\cdot y)\psi'(\tau\cdot y)\le0,
\]
because $\psi'\le0$ and $\psi'$ is supported in $[b,2b]$; in the singleton case this term is zero. The remaining density term is bounded by $Ch\rho_k^n$, as above. Hence
\[
\int_{\{\Psi_k^*<h\}}f_k\zeta\,y\cdot G_ky\,dy
\le nhM_k(h)+Ch\rho_k^n.
\]
Using $0\le\zeta\le1$, changing variables in the second quadratic term, and discarding $-2\Psi_k\le0$, we conclude that
\[
\begin{aligned}
E_k(h)
&\le\int_{\{\Psi_k<h\}}x\cdot H_kx\,dx
+\int_{\{\Psi_k^*<h\}}f_k\zeta\,y\cdot G_ky\,dy\\
&\le nh|S_{k,h}|+nhM_k(h)+Ch\rho_k^n.
\end{aligned}
\]
Together with \eqref{approximatemass}, this proves
\begin{equation}\label{smoothenergy}
0\le E_k(h)\le C_nh\bigl(M_k(h)+\rho_k^n\bigr).
\end{equation}

We next estimate $R_k$. It vanishes in the singleton case, so assume that $L$ is a segment. The decomposition $H_kx+Du_k=(H_kx-Du_k)+2Du_k$ separates the part controlled by $Q_k$ from a term supported where the cutoff changes. Indeed, since $D\zeta(Du_k)=2\psi(D_\tau u_k)\psi'(D_\tau u_k)\tau$, we have
\[
\begin{aligned}
R_k(h)
&=2\int_{\{\Psi_k<h\}}\psi\psi'\,
\tau\cdot(H_kx-Du_k)\,dx\\
&\quad+4\int_{\{\Psi_k<h\}}\psi\psi' D_\tau u_k\,dx,
\end{aligned}
\]
where $\psi$ and $\psi'$ in this display are evaluated at $D_\tau u_k$. For the first integral, the pointwise inequality
\[
\begin{aligned}
|\tau\cdot(H_kx-Du_k)|
&=\bigl|H_k^{1/2}\tau\cdot
(H_k^{1/2}x-H_k^{-1/2}Du_k)\bigr|\\
&\le(\tau\cdot H_k\tau)^{1/2}Q_k^{1/2}
\end{aligned}
\]
and Cauchy--Schwarz give the bound
\[
2E_k(h)^{1/2}
\left(\int_{S_{k,h}}|\psi'(D_\tau u_k)|^2
\tau\cdot H_k\tau\,dx\right)^{1/2}.
\]
To estimate the above integral, fix $x'\in\operatorname{proj}_{\tau^\perp}S_{k,h}$ and restrict $u_k$ to the line parallel to $\tau$:
\[
I=\{t:x'+t\tau\in S_{k,h}\},\qquad
\varphi(t)=u_k(x'+t\tau).
\]
Here $I$ is an interval, $\varphi'=D_\tau u_k$, and $\varphi''=\tau\cdot H_k\tau>0$. The substitution $s=\varphi'(t)$ therefore yields
\[
\int_I|\psi'(\varphi'(t))|^2\varphi''(t)\,dt
=\int_{\varphi'(I)}|\psi'(s)|^2\,ds
\le\int_b^{2b}\frac{C}{b^2}\,ds\le\frac Cb.
\]
Integrating this estimate over $x'$ and using \eqref{fibers}, we obtain
\[
\int_{S_{k,h}}|\psi'(D_\tau u_k)|^2\tau\cdot H_k\tau\,dx
\le\frac Cb|\operatorname{proj}_{\tau^\perp}S_{k,h}|
\le\frac{C_n}{ba_{k,h}}|S_{k,h}|.
\]
For the second integral in $R_k$, notice that $\psi'\ne0$ only when $b<D_\tau u_k<2b$, so $|\psi\psi'D_\tau u_k|\le C$. This integrand vanishes wherever $\zeta(Du_k)=1$. The second estimate in \eqref{fibers} therefore gives
\[
4\int_{\{\Psi_k<h\}}|\psi\psi'D_\tau u_k|\,dx
\le C|\{\Psi_k<h,\ \zeta(Du_k)\ne1\}|
\le C\delta_{k,h}|S_{k,h}|.
\]
Consequently,
\[
|R_k(h)|\le C\left(\frac{E_k(h)|S_{k,h}|}{ba_{k,h}}\right)^{1/2}
+C\delta_{k,h}|S_{k,h}|.
\]
Using \eqref{approximatemass}, \eqref{smoothenergy}, and $h/a_{k,h}=\delta_{k,h}$, and taking $h_0$ smaller so that $\delta_{k,h}\le1$, we arrive at
\begin{equation}\label{smoothremainder}
|R_k(h)|\le C\sqrt{\delta_{k,h}}M_k(h)
+C\sqrt{\delta_{k,h}M_k(h)\rho_k^n}.
\end{equation}
Here $C$ may depend on the fixed cutoff parameter $b$, but not on $k$ or $h$.

\medskip
\noindent\textit{Step 5. Passage to the limit.}
By convexity, $\Psi_k(\lambda x)\le\lambda\Psi_k(x)$ for $0<\lambda<1$. Hence, for $0<s<t<h_0$,
\[
\frac{s}{t}\{\Psi_k<t\}\subset\{\Psi_k<s\},\qquad
0\le M_k(t)-M_k(s)
\le\bigl((t/s)^n-1\bigr)|\{\Psi_k<s\}|.
\]
Using \eqref{halfsec} and \eqref{approximatemass}, we obtain
\[
0\le M_k'(h)\le\frac nh|\{\Psi_k<h\}|
\le C_nh^{n/2-1},\qquad
|F_k'(h)|\le\frac{C_n}{h}.
\]
Thus the convergence $M_k\to M$ from Step~2 is locally uniform, and $M,F$ are locally Lipschitz. Moreover, \eqref{smoothwindow} gives
\[
0\le E_k'(h)
\le4h^{n/2+1}|F_k'(h)|+|R_k(h)|+|\Gamma_k(h)|,
\]
whose right-hand side is locally uniformly bounded by \eqref{smoothremainder} and $|\Gamma_k|\le C\rho_k^n$. Together with \eqref{smoothenergy}, this implies, along a subsequence,
\begin{equation}\label{energylimit}
E_k(h)\longrightarrow E(h)\qquad 0<h<h_0,
\end{equation}
locally uniformly in $h$, where $E$ is locally Lipschitz, nonnegative and nondecreasing, with $E(h)\le C_nhM(h)$.

Set $R=E'+4hM'-2nM$. Integrating \eqref{smoothwindow} by parts and passing to the limit, for $0<s<t<h_0$ we obtain
\[
\begin{aligned}
\int_s^tR_k(h)\,dh
&=E_k(t)-E_k(s)+4[hM_k(h)]_s^t
-(2n+4)\int_s^tM_k(h)\,dh
-\int_s^t\Gamma_k(h)\,dh\\
&\longrightarrow\int_s^tR(h)\,dh.
\end{aligned}
\]
By \eqref{smoothremainder} and the local uniform convergence of $M_k$ and $\delta_{k,h}$, we have
\[
\left|\int_s^tR(h)\,dh\right|
\le C\int_s^t\sqrt{\delta_h}M(h)\,dh.
\]
By Lebesgue differentiation, $|R(h)|\le C\sqrt{\delta_h}M(h)$ almost everywhere. The definition of $R$ then implies
\[
-4h^{n/2+1}F'(h)=2nM(h)-4hM'(h)=E'(h)-R(h),
\]
which proves \eqref{identity}.
\end{proof}


\subsection{Blow-up limit}

We normalize $S_h$ while keeping the base point at $0$. By \cite[Lemma~A.3]{CM}, choose a symmetric positive definite map $A_h$ such that $c_h=\bary(A_hS_h)$ satisfies
\begin{equation}\label{john}
c_h+B_1\subset A_hS_h\subset c_h+n^{3/2}B_1.
\end{equation}
Since $0\in S_h$, $|c_h|\le n^{3/2}$ and $A_hS_h\subset B_{2n^{3/2}}$. By the mass comparison and the normalization of $S_h$,
\begin{equation}\label{volume}
\det A_h^{-1}\approx_n |S_h|
\approx_n M(h)=h^{n/2}F(h),
\qquad
\|A_h^{-1}\|\le\operatorname{diam}S_h
\longrightarrow0.
\end{equation}
Let $\tilde u_h(x)=h^{-1}u(A_h^{-1}x)$. Then
\[
\{\tilde u_h<s\}=A_hS_{sh},\qquad
\partial u(A_h^{-1}x)=hA_h^T\partial\tilde u_h(x).
\]
In the segment case, $-a_h\tau\in\partial S_h$ implies
\begin{equation}\label{targetprojection}
|hA_h\tau|\le2n^{3/2}\delta_h\longrightarrow0.
\end{equation}

\begin{lemma}\label{sectionlimits}
Suppose $h_j\to0$ and, for every $s>0$,
\begin{equation}\label{blowupcriterion}
\frac{M(sh_j)}{M(h_j)}\longrightarrow s^{n/2},
\qquad
\frac{E(sh_j)}{h_jM(h_j)}\longrightarrow0.
\end{equation}
Then, after passing to a subsequence, there is a compact convex set $K$ with nonempty interior such that
\[
\overline{A_{h_j}S_{sh_j}}\longrightarrow\sqrt{s}\,K
\quad\text{in Hausdorff distance for every }s>0.
\]
\end{lemma}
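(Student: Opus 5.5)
Pass to a subsequence with Blaschke selection, then use the vanishing of the normalized radial energy to identify each limit as $\sqrt{s}K$.

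\emph{Compactness.} By \eqref{john}, the sets $\overline{A_{h_j}S_{h_j}}$ lie in $B_{2n^{3/2}}$ and contain a unit ball. For fixed $s$, convexity of $u$ with $u(0)=0$ gives $\lambda S_h\subset S_{\lambda h}$ for $\lambda\le1$, and hence $S_{\lambda^{-1}h}\subset \lambda^{-1}S_h$. So $\overline{A_{h_j}S_{sh_j}}$ is trapped between $\min(s,1)\,\overline{A_{h_j}S_{h_j}}$ and $\max(s,1)\,\overline{A_{h_j}S_{h_j}}$, uniformly in $j$. Blaschke selection on a countable dense set of $s$, plus a diagonal argument, gives Hausdorff limits $K_s$ along one subsequence. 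These are compact convex sets with nonempty interior, nested in $s$, and $K_1$ contains a unit ball. The set monotonicity above, together with $M(sh_j)/M(h_j)\to s^{n/2}$ and the comparison $|S_h|\approx M(h)$ from \eqref{mass}, shows that $s\mapsto K_s$ is continuous. The limit then extends to all $s>0$. Put $K:=K_1$.

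\emph{Identification $K_s=\sqrt s K$.} The functions $\tilde u_j:=\tilde u_{h_j}$ are convex with $\tilde u_j(0)=0$ and are uniformly bounded on compacts of the limiting region. After a further subsequence they converge locally uniformly to a convex $w$ with $w(0)=0$ and $\{w<s\}\supset \operatorname{int}K_s$; since each $K_s$ has interior, the sublevel sets of $w$ do converge. The task is to show $w$ is $2$-homogeneous, i.e. $x\cdot Dw=2w$.

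I would use the energy. In normalized variables $\tilde x=A_hx$, the integrand of $E$ is $|(D^2u)^{1/2}x-(D^2u)^{-1/2}Du|^2$, which is invariant under the change $u\mapsto h^{-1}u(A_h^{-1}\cdot)$ up to the factor $h$. Hence $E(sh_j)/(h_jM(h_j))\to0$ says, in the limit, that the normalized energy of $\tilde u_j$ on $\{\tilde x\cdot D\tilde u_j<s\}$, weighted by $\zeta$, vanishes. By Cauchy--Schwarz,
\[
|\tilde x\cdot D\tilde u-2\tilde u|\le \text{(energy)}^{1/2}\cdot(\text{bounded quadratic terms})^{1/2}.
\]
More precisely, $\int|x\cdot Du-2\Psi|$ is controlled by the integral of $|x\cdot(H x- Du)|\le (x\cdot Hx)^{1/2}Q^{1/2}$. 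The bounded quadratic term is handled by the integrated bound $\int x\cdot Hx\le nh|S_h|$ from Step 4 of Proposition~\ref{monotonicity}. This yields $\int_{\{\Psi<sh\}}\zeta(Du)\,|x\cdot Du - ?|\,dx=o(hM(h))$ for the relevant homogeneity defect. Equivalently, one may argue on the target side that $D^2u\,x\approx Du$ in $L^2$ sense, i.e. $D(Du)\,x\approx Du$, so $Du$ is asymptotically $1$-homogeneous along rays. Integrating along rays from $0$ then gives $\tilde u_j(t\tilde x)\approx t^2\tilde u_j(\tilde x)$ in $L^1$. Passing to the limit with convexity and uniform convergence gives $w(t x)=t^2w(x)$, so $\{w<s\}=\sqrt s\{w<1\}$, hence $K_s=\sqrt sK$.

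\emph{Main obstacle.} The hardest step is the cutoff $\zeta$: the energy controls homogeneity only where $\zeta(Du)=1$. I would use \eqref{fibers}, which makes the set where $\zeta\ne1$ of measure $O(\delta_h|S_h|)=o(|S_h|)$, so that after normalization the defect lives on a set of vanishing measure; convexity upgrades a.e. homogeneity to full homogeneity. A second technical point is that $u$ is not smooth, so these energy manipulations must be done at the level of $u_k$, using the locally uniform convergence $E_k\to E$ from \eqref{energylimit} together with a diagonal choice of $k=k(j)$.
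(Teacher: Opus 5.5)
\textbf{The gap.} Your compactness step and your derivation of $2$-homogeneity follow the paper's route. Your scalar estimate $|x\cdot(D^2w_jx-Dw_j)|\le(x\cdot D^2w_jx)^{1/2}Q_j^{1/2}$ for $w_j=h_j^{-1}u_{k(j)}(A_{h_j}^{-1}\cdot)$, combined with the integrated bound $\int x\cdot Hx\le nh|S_h|$ from Step~4 of Proposition~\ref{monotonicity}, is a workable substitute for the paper's \eqref{tracebound} and \eqref{traceb}. Your alternative claim that $D^2u\,x\approx Du$ in $L^2$ is not available, since one only has $|Hx-Du|^2\le\tr H\,Q$.

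The genuine gap is the final implication ``$w(tx)=t^2w(x)$, so $\{w<s\}=\sqrt s\{w<1\}$.'' The limit $w$ lives only on $\Omega_\infty=\bigcup_s\operatorname{int}K_s$. This is a convex set with $r\Omega_\infty\subset\Omega_\infty$ for $r<1$; in the segment case $0\in\partial\Omega_\infty$. The homogeneity identity holds only for $t\in(0,1)$ inside $\Omega_\infty$, so it yields only $rK_s\subset K_{r^2s}$. The reverse inclusion needs $z/r\in\Omega_\infty$ for $z\in K_{r^2s}$, i.e.\ that $\Omega_\infty$ is a cone, and nothing you prove gives this. For example, $\Omega_\infty=B_2(-2e_1)$ and $w(x)=|x|^2/16$ satisfy every conclusion you reach, yet $K_s=\overline{\Omega_\infty}$ for all $s\ge1$. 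Your energy estimates hold on compact subsets of $\Omega_\infty$ and cannot see $\partial\Omega_\infty$, so they cannot exclude this.

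\textbf{The fix.} What closes the gap is the first hypothesis in \eqref{blowupcriterion}, which your identification step never uses. In your compactness step that hypothesis is superfluous: $S_{th}\subset(t/s)S_{sh}$ already gives equicontinuity in $s$, and $|S_h|\approx_nM(h)$ holds only up to constants and could not give it. The paper instead uses the hypothesis as follows. A change of variables writes $M(2sh_j)/\det A_{h_j}^{-1}$ as the integral of $\zeta(h_jA_{h_j}^TD\tilde u_{h_j})\mathbf 1_{\{x\cdot D\tilde u_{h_j}<2s\}}$. Dominated convergence applies because $x\cdot D\tilde u_{h_j}\to2w$ a.e.\ in $\Omega_\infty$, $x\cdot D\tilde u_{h_j}\ge\tilde u_{h_j}\to+\infty$ off $\overline{\Omega_\infty}$, $\zeta\to1$, and $|\{w=s\}|=|\partial\Omega_\infty|=0$. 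The limit is $|K_s|$, so the hypothesis gives $|K_s|=s^{n/2}|K_1|$, hence $|rK_s|=|K_{r^2s}|$. Nested open convex sets of equal finite volume coincide, so $K_s=\sqrt sK_1$.

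\textbf{Two smaller repairs.} First, handle the cutoff by \eqref{targetprojection} rather than \eqref{fibers}. Since $D_\tau u_{k(j)}=(h_jA_{h_j}\tau)\cdot Dw_j\to0$ uniformly on compacts, $\zeta\equiv1$ there for large $j$. A small-measure exceptional set, by contrast, does not control the Hessian term, which can concentrate. Second, your energy control is only interior, while $0$ may lie on $\partial\Omega_\infty$, so do not integrate from $0$ at the level of $\tilde u_j$. Instead pass to the limit on compacts, conclude that $x\cdot Dw-2w$ is constant along rays, and remove the constant using $0\le w(rx)\le rw(x)$ from \eqref{radialconvexity} as $r\to0$.
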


\begin{proof}
\medskip
\noindent\textit{Step 1. Compactness.}
Since $u\ge0$ and $u(0)=0$, convexity gives
\[
\min\{1,s\}S_h\subset S_{sh}
\subset\max\{1,s\}S_h
\]
and
\[
S_{sh}\subset S_{th}\subset(t/s)S_{sh}
\qquad(0<s<t).
\]
For each fixed $s>0$, \eqref{john} therefore bounds $A_{h_j}S_{sh_j}$ in a fixed ball and places a ball of radius $\min\{1,s\}$ inside it. By Blaschke's selection theorem and a diagonal argument, a subsequence converges at every positive rational height.

The second inclusion also gives
\[
d_{\mathrm H}\!\left(
\overline{A_{h_j}S_{sh_j}},
\overline{A_{h_j}S_{th_j}}
\right)
\le C_n\max\{1,s\}\frac{t-s}{s}
\qquad(0<s<t).
\]
Thus the same subsequence converges at every positive height, locally uniformly in the height:
\begin{equation}\label{sectionlimit}
\overline{A_{h_j}S_{sh_j}}\longrightarrow\overline{K_s}.
\end{equation}
Here $K_s$ denotes the interior of the limiting convex body. Each $K_s$ is bounded, nonempty and convex, and $0\in\overline{K_s}$.

Let $\Omega_\infty=\bigcup_{s>0}K_s.$ The inclusions above pass to the limit, so the sets $K_s$ are nested and $\Omega_\infty$ is open and convex. If $G\Subset K_s$, convexity and \eqref{sectionlimit} imply $G\subset A_{h_j}S_{sh_j}$ for all sufficiently large $j$. Hence $0\le\tilde u_{h_j}<s$ on $G$. By convexity, after a further subsequence, $\tilde u_{h_j}\longrightarrow\tilde u_\infty \ \text{locally uniformly in }\Omega_\infty, $ where $\tilde u_\infty$ is finite, nonnegative and convex.

Section convergence also implies $\tilde u_{h_j}(x)\to+\infty$ for $x\notin\overline{\Omega_\infty}$. Since $rK_s\subset K_{rs}$ for $0<r<1$, we have $r\Omega_\infty\subset\Omega_\infty$ and, by convexity,
\begin{equation}\label{radialconvexity}
0\le\tilde u_\infty(rx)\le r\tilde u_\infty(x)
\qquad(x\in\Omega_\infty,\ 0<r<1).
\end{equation}

\medskip
\noindent\textit{Step 2. Limit of the energy estimate.}
Let $J_j=\det A_{h_j}^{-1}\approx_n M(h_j)$ by \eqref{volume}. For the approximations in Proposition~\ref{monotonicity}, local uniform convergence and \eqref{energylimit} allow us to choose an increasing sequence $k(j)$ such that
\[
\|u_{k(j)}-u\|_{L^\infty(B_{r'})}\le h_j/j
\]
and
\[
E_{k(j)}(mh_j)\le E(mh_j)+h_jJ_j/j
\]
for every integer $1\le m\le j$ with $mh_j<h_0$.

Set $w_j(x)=h_j^{-1}u_{k(j)}(A_{h_j}^{-1}x)$. By \eqref{volume} and the choice of $k(j)$, $w_j\to\tilde u_\infty$ locally uniformly in $\Omega_\infty$. Fix bounded open sets $G\Subset G'\Subset\Omega_\infty$. Convexity implies $\sup_{G'}|Dw_j|\le C_{G'}$. Choosing $\chi\in C_c^\infty(G')$ with $\chi\ge0$ and $\chi=1$ on $G$, we obtain
\begin{equation}\label{traceb}
\int_G\Delta w_j\,dx
\le\int\chi\Delta w_j\,dx
=\int w_j\Delta\chi\,dx
\le C_G.
\end{equation}

Since $Du_{k(j)}(A_{h_j}^{-1}x)=h_jA_{h_j}^TDw_j(x)$, in the segment case \eqref{targetprojection} implies
\[
\sup_G
\left|\tau\cdot h_jA_{h_j}^TDw_j\right|
\le |h_jA_{h_j}\tau|\sup_G|Dw_j|
\longrightarrow0.
\]
Hence $\zeta(h_jA_{h_j}^TDw_j)=1$ on $G$ for all large $j$. In the singleton case $\zeta\equiv1$.

Choose a fixed integer $m$ such that $x\cdot Dw_j(x)<m$ on $G$ for all large $j$. Under the change of variables, we have
\[
\begin{gathered}
D^2u_{k(j)}(A_{h_j}^{-1}x)
=h_jA_{h_j}^TD^2w_j(x)A_{h_j},\\
\Psi_{k(j)}(A_{h_j}^{-1}x)
=h_jx\cdot Dw_j(x).
\end{gathered}
\]
Expanding the square in the definition of $Q_{k(j)}$ therefore gives
\[
\begin{aligned}
&\int_G
\left|(D^2w_j)^{1/2}x
-(D^2w_j)^{-1/2}Dw_j\right|^2\,dx\\
&\qquad=
\frac{1}{h_jJ_j}
\int_{A_{h_j}^{-1}G}Q_{k(j)}\,dx
\le\frac{E_{k(j)}(mh_j)}{h_jJ_j}
\le\frac{E(mh_j)}{h_jJ_j}+\frac1j
\longrightarrow0
\end{aligned}
\]
by \eqref{blowupcriterion} and $J_j\approx_n M(h_j)$.

Since $D(x\cdot Dw_j-2w_j)=D^2w_jx-Dw_j$ and
\begin{equation}\label{tracebound}
|D^2w_jx-Dw_j|^2
\le
\tr D^2w_j\,
\left|(D^2w_j)^{1/2}x
-(D^2w_j)^{-1/2}Dw_j\right|^2,
\end{equation}
\eqref{traceb} and Cauchy--Schwarz yield
\begin{equation}\label{euler}
\left(
\int_G|D(x\cdot Dw_j-2w_j)|\,dx
\right)^2
\le C_G\frac{E_{k(j)}(mh_j)}{h_jJ_j}
\longrightarrow0.
\end{equation}

\medskip
\noindent\textit{Step 3. Radial homogeneity.}
Local uniform convergence and convexity imply
\[
x\cdot Dw_j-2w_j
\longrightarrow
x\cdot D\tilde u_\infty-2\tilde u_\infty
\quad\text{in }L^1_{\mathrm{loc}}(\Omega_\infty).
\]
By \eqref{euler} and connectedness of $\Omega_\infty$, $x\cdot D\tilde u_\infty-2\tilde u_\infty=c_0$ almost everywhere, for some $c_0\in\R$.

The function $\tilde u_\infty$ is locally Lipschitz. Integrating along almost every ray gives
\[
\tilde u_\infty(rx)=r^2\tilde u_\infty(x)+\frac{c_0}{2}(r^2-1).
\]
By \eqref{radialconvexity}, letting $r\to0$ yields $c_0=0$. Continuity therefore gives
\begin{equation}\label{radialscaling}
\tilde u_\infty(rx)=r^2\tilde u_\infty(x)
\qquad(x\in\Omega_\infty,\ 0<r<1).
\end{equation}

By the convergence of sections and \eqref{radialscaling}, we have $K_s=\{x\in\Omega_\infty:\tilde u_\infty(x)<s\}.$ Moreover, each ray meets the level set $\{\tilde u_\infty=s\}$ at most once. Hence $|\{\tilde u_\infty=s\}|=0$ for $s>0.$

\medskip
\noindent\textit{Step 4. Scaling of the limiting sections.}
Fix $s>0$. Changing variables in the definition of $M$, we obtain
\[
\frac{M(2sh_j)}{J_j}
=
\int_{\mathbb R^n}
\zeta\!\left(h_jA_{h_j}^TD\tilde u_{h_j}(x)\right)
\mathbf1_{\{x\cdot D\tilde u_{h_j}(x)<2s\}}\,dx.
\]
The integrands are bounded by one and supported in $A_{h_j}S_{2sh_j}$, hence in a fixed ball. Almost-everywhere gradient convergence, \eqref{targetprojection} and \eqref{radialscaling} imply
\[
x\cdot D\tilde u_{h_j}\longrightarrow2\tilde u_\infty,
\qquad
\zeta\!\left(h_jA_{h_j}^TD\tilde u_{h_j}\right)\longrightarrow1
\quad\text{almost everywhere in }\Omega_\infty.
\]
Outside $\overline{\Omega_\infty}$, $x\cdot D\tilde u_{h_j}\ge\tilde u_{h_j}\to+\infty$. Since $|\partial\Omega_\infty|=|\{\tilde u_\infty=s\}|=0$, dominated convergence yields
\[
\frac{M(2sh_j)}{J_j}\longrightarrow|K_s|.
\]

Since $0<|K_1|<\infty$, \eqref{blowupcriterion} now implies
\[
\frac{|K_s|}{|K_1|}
=
\lim_{j\to\infty}
\frac{M(2sh_j)/M(h_j)}{M(2h_j)/M(h_j)}
=
\frac{(2s)^{n/2}}{2^{n/2}}
=s^{n/2}.
\]
On the other hand, \eqref{radialscaling} gives $rK_s\subset K_{r^2s}$ for $0<r<1$. It follows that
$$|rK_s|=r^n|K_s|=|K_{r^2s}|.$$
Since both sets are open and convex, it follows that $rK_s=K_{r^2s}.$ Therefore, $K_s=\sqrt{s}\,K_1$ for $s>0$. Taking $K=\overline{K_1}$ and using \eqref{sectionlimit} completes the proof.
\end{proof}



\subsection{Good and bad scales}

\begin{proof}[Proof of Theorem~\ref{estimate}]
\medskip
\noindent\textit{Step 1. Variation of $\log F$.}
Fix $\theta\in(0,1)$ and set
\[
h_j=h_0\theta^j,\qquad
t_j=-\log h_j,\qquad
\ell(t)=\log F(e^{-t}).
\]
By \eqref{mass}, $\ell$ is bounded above. Since $E'\ge0$, \eqref{identity} gives
\[
\ell'(t)
=\frac{E'(e^{-t})-R(e^{-t})}{4M(e^{-t})}
\ge-C\sqrt{\delta_{e^{-t}}}.
\]
Writing $t_0=-\log h_0$, we obtain
\[
\begin{aligned}
\int_{t_0}^T|\ell'(t)|\,dt
&=\ell(T)-\ell(t_0)
+2\int_{t_0}^T(-\ell'(t))_+\,dt\\
&\le C+C\int_{t_0}^T\sqrt{\delta_{e^{-t}}}\,dt
=o(T),
\end{aligned}
\]
because $\delta_h\to0$ as $h \rightarrow 0$.

We extend $\ell'$ by zero below $t_0$. Fix an integer $m\ge1$. Since $t_{j+1}-t_j=|\log\theta|$, the intervals $[t_j-m,t_j+m]$ have bounded overlap. Hence
\[
\begin{aligned}
&\#\left\{1\le j\le J:
\int_{t_j-m}^{t_j+m}|\ell'(t)|\,dt>\frac1m
\right\}\\
&\qquad\le
C_{\theta,m}
\int_{t_0}^{t_J+m}|\ell'(t)|\,dt=o(J).
\end{aligned}
\]
Thus, for each fixed $m$, the number of indices where the variation exceeds $1/m$ is $o(J)$ as $J\to\infty$.

\medskip
\noindent\textit{Step 2. Mass ratios and vanishing energy.}
Suppose $j\to\infty$ along a sequence satisfying
\[
\int_{t_j-m}^{t_j+m}|\ell'(t)|\,dt\longrightarrow0
\qquad\text{for every fixed }m\ge1.
\]
We show that \eqref{blowupcriterion} holds along this sequence.

For any fixed $s>0$, choose $m>|\log s|$. Then
\[
\left|\log\frac{F(sh_j)}{F(h_j)}\right|
\le\int_{t_j-m}^{t_j+m}|\ell'(t)|\,dt
\longrightarrow0.
\]
The convergence is uniform when $s$ ranges over a compact subset of $(0,\infty)$. Consequently,
\begin{equation}\label{massratio}
\frac{F(sh_j)}{F(h_j)}\longrightarrow1,
\qquad
\frac{M(sh_j)}{M(h_j)}\longrightarrow s^{n/2}.
\end{equation}

Next, fix $s>0$ and $0<\sigma<1$. Integrating \eqref{identity} gives
\[
\begin{aligned}
\frac1{F(h_j)}
\int_{\sigma sh_j}^{sh_j}
\frac{E'(r)}{r^{n/2+1}}\,dr
&=4\frac{F(\sigma sh_j)-F(sh_j)}{F(h_j)}\\
&\quad+\frac1{F(h_j)}
\int_{\sigma sh_j}^{sh_j}
\frac{R(r)}{r^{n/2+1}}\,dr.
\end{aligned}
\]
The first term tends to zero by \eqref{massratio}. Since $|R(r)|\le C\sqrt{\delta_r}\,r^{n/2}F(r)$, the second term is bounded in absolute value by
\[
C\sup_{0<r\le sh_j}\sqrt{\delta_r}
\int_{\sigma s}^s
\frac{F(th_j)}{F(h_j)}\,\frac{dt}{t},
\]
which also tends to zero.

Since $E$ is nondecreasing,
\[
E(sh_j)\le E(\sigma sh_j)
+(sh_j)^{n/2+1}
\int_{\sigma sh_j}^{sh_j}
\frac{E'(r)}{r^{n/2+1}}\,dr.
\]
Using \eqref{radialbound} and \eqref{massratio}, we obtain
\[
\limsup_{j\to\infty}
\frac{E(sh_j)}{h_jM(h_j)}
\le C_n(\sigma s)^{n/2+1}.
\]
Letting $\sigma \rightarrow 0$ gives
\[
\frac{E(sh_j)}{h_jM(h_j)}\longrightarrow0.
\]
Thus both conditions in \eqref{blowupcriterion} hold along the chosen sequence.

\medskip
\noindent\textit{Step 3. Contraction at good scales.}
Fix $\eta\in(0,1/2)$. We claim that there exist an integer $m_0\ge1$ and $J_0$ such that
\[
\int_{t_j-m_0}^{t_j+m_0}|\ell'(t)|\,dt
\le\frac1{m_0},\qquad j\ge J_0,
\]
implies
\begin{equation}\label{goodcontraction}
\operatorname{diam}S_{h_{j+1}}
\le\theta^{1/2-\eta/2}
\operatorname{diam}S_{h_j}.
\end{equation}

Suppose otherwise. For each integer $m\ge1$, we can choose an increasing sequence of indices $j_m\to\infty$ such that
\[
\int_{t_{j_m}-m}^{t_{j_m}+m}|\ell'(t)|\,dt
\le\frac1m,
\]
but \eqref{goodcontraction} fails at $j_m$.

For every fixed integer $k_0\ge1$ and $m\ge k_0$,
\[
\int_{t_{j_m}-k_0}^{t_{j_m}+k_0}|\ell'(t)|\,dt
\le\frac1m\longrightarrow0.
\]
By Step 2, \eqref{blowupcriterion} holds along $h_{j_m}$. Lemma~\ref{sectionlimits} therefore gives, after passing to a subsequence,
\[
\overline{A_{h_{j_m}}S_{h_{j_m}}}\longrightarrow K,
\qquad
\overline{A_{h_{j_m}}S_{\theta h_{j_m}}}
\longrightarrow\sqrt\theta\,K.
\]

Thus, for some $\varepsilon_m\to0$,
\[
\overline{A_{h_{j_m}}S_{\theta h_{j_m}}}
\subset
\sqrt\theta\,\overline{A_{h_{j_m}}S_{h_{j_m}}}
+\varepsilon_m\overline{B_1}.
\]
Applying $A_{h_{j_m}}^{-1}$ and using \eqref{volume}, we obtain
\[
\begin{aligned}
\operatorname{diam}S_{\theta h_{j_m}}
&\le\sqrt\theta\,\operatorname{diam}S_{h_{j_m}}
+2\varepsilon_m\|A_{h_{j_m}}^{-1}\|\\
&\le(\sqrt\theta+2\varepsilon_m)
\operatorname{diam}S_{h_{j_m}}.
\end{aligned}
\]
Since $\sqrt\theta<\theta^{1/2-\eta/2}$, this contradicts the failure of \eqref{goodcontraction} for all sufficiently large $m$.

\medskip
\noindent\textit{Step 4. Iteration over all scales.}
Fix $m_0$ and $J_0$ as in Step 3, and define
\[
\mathcal I=
\left\{j\ge J_0:
\int_{t_j-m_0}^{t_j+m_0}|\ell'(t)|\,dt
\le\frac1{m_0}\right\}.
\]
Since $m_0$ is now fixed, Step 1 gives $\#\bigl(\{1,\ldots,J\}\setminus\mathcal I\bigr) =o(J).$ Thus \eqref{goodcontraction} holds at all but $o(J)$ indices up to $J$.

At the remaining indices, nesting gives $\operatorname{diam}S_{h_{j+1}} \le\operatorname{diam}S_{h_j}.$ Iteration therefore yields $\operatorname{diam}S_{h_j} \le C_\eta \theta^{(1/2-\eta/2)(j-o(j))}.$ For all sufficiently large $j$,
$$(1/2-\eta/2)(j-o(j))
\ge(1/2-\eta)j.$$

Hence, enlarging $C_\eta$ if necessary, $ \operatorname{diam}S_{h_j} \le C_\eta h_j^{1/2-\eta}.$ For $h_{j+1}<h\le h_j$, use $S_h\subset S_{h_j}$ and $h_j<h/\theta$. Absorbing the fixed factor into $C_\eta$, we obtain
\begin{equation}\label{diameter}
\operatorname{diam}S_h\le C_\eta h^{1/2-\eta}
\qquad(0<h\le h_0).
\end{equation}

\medskip
\noindent\textit{Step 5. Legendre duality.}
For $y$ sufficiently close to $0$, set $x=Dv(y)$. If $x=0$, the desired estimate is immediate. Otherwise take $h=2u(x)$ in \eqref{diameter}. Since $0,x\in S_h$, $|x|\le C_\eta u(x)^{1/2-\eta},$ and hence $c_\eta|x|^{2/(1-2\eta)}\le u(x).$ Since $v\ge0$, Legendre duality gives $u(x)=x\cdot y-v(y)\le|x||y|.$ Consequently, $|Dv(y)|\le C_\eta|y|^{(1-2\eta)/(1+2\eta)}.$ Choosing $\eta=\varepsilon/(4-2\varepsilon)$ and changing back to the original coordinates proves \eqref{gradient}.
\end{proof}

\section{Singularities in optimal transportation}\label{singularities}
We prove Theorem~\ref{complete} using the $C^{1,1-\varepsilon}$ estimate established in the last section. Under its hypotheses, we recall that
\[
A=\{x\in\Omega:\partial u(x)\text{ is not a singleton}\},
\qquad\Sigma=\overline A\cap\Omega.
\]
Lemma~\ref{extremelocation} implies
\begin{equation}\label{extremes}
\ext\partial u(x)\subset\overline{\Omega^*}
\qquad(x\in\Omega).
\end{equation}

By \cite[Theorem~4.1]{AAC}, if $\phi$ is finite and convex on an open convex set $G\subset\R^n$, then
\[
\dim_{\HH}\{x\in G:\dim\partial \phi(x)\ge k\}\le n-k,
\qquad 1\le k\le n.
\]

To control the images of target faces, we use the following consequence of pointwise H\"older estimates.

\begin{lemma}\label{dimension}
Let $P$ be a subset of an affine $k$-plane and let $T:P\to\R^n$ be bounded. Suppose that, for each $p\in P$ and $\alpha\in(0,1)$, there are $c_{p,\alpha},r_{p,\alpha}>0$ such that
\[
|T(q)-T(p)|\le c_{p,\alpha}|q-p|^\alpha
\qquad(q\in P\cap B_{r_{p,\alpha}}(p)).
\]
Then $\dim_{\HH}T(P)\le k$.
\end{lemma}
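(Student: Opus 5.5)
The proof is a countable decomposition followed by a covering estimate. The constants $c_{p,\alpha},r_{p,\alpha}$ are not uniform, so the first step is to make them uniform on countably many pieces. Then we use countable stability of Hausdorff dimension. Throughout, fix $\alpha\in(0,1)$; it suffices to show $\HH^{s}(T(P))=0$ for every $s>k/\alpha$. Since $\alpha$ is arbitrary, letting $\alpha\to1$ gives $\dim_{\HH}T(P)\le k$.

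\emph{Step 1 (uniformization).} For integers $m\ge1$ set
\[
P_m=\{p\in P:\ c_{p,\alpha}\le m,\ r_{p,\alpha}\ge 1/m\}.
\]
Then $P=\bigcup_m P_m$. By countable stability it suffices to show $\dim_{\HH}T(P_m)\le k/\alpha$ for each $m$. Cover the affine $k$-plane by countably many balls of radius $1/(4m)$, and further split $P_m$ into the pieces $P_{m,i}=P_m\cap B^{(i)}$. Each piece has diameter less than $1/(2m)$. Hence for $p,q\in P_{m,i}$ we have $|q-p|<r_{p,\alpha}$, and therefore $|T(q)-T(p)|\le m|q-p|^\alpha$. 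So $T$ is uniformly $\alpha$-Hölder on each piece $P_{m,i}$ with constant $m$.

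\emph{Step 2 (covering).} Fix a piece $Q=P_{m,i}$, a subset of a bounded region of a $k$-plane. For $\rho>0$ the set $Q$ is covered by at most $C_k\rho^{-k}$ balls of radius $\rho$ centred at points of $Q$. Discard the balls missing $Q$, and recentre the remaining ones at points $p$ of $Q$, doubling the radius if needed. The image of $Q\cap B_{2\rho}(p)$ under $T$ lies in $B_{m(2\rho)^\alpha}(T(p))$. This gives
\[
\HH^s_\infty(T(Q))\le C_k\rho^{-k}\bigl(2m(2\rho)^\alpha\bigr)^s\le C\rho^{\alpha s-k}\to0\qquad\text{as }\rho\to0,
\]
and similarly for $\HH^s_\delta$ once $\rho$ is small. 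Hence $\HH^s(T(Q))=0$ for $s>k/\alpha$.

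\emph{Step 3 (conclusion).} Taking the countable union over $m$ and $i$ gives $\dim_{\HH}T(P)\le k/\alpha$, and sending $\alpha\to1$ gives the claim.

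The only subtle point is Step 1. The hypothesis gives Hölder control only relative to the base point $p$, so the piece must have diameter below $r_{p,\alpha}$ for every $p$ in it, and the constant bound must hold at the centre used in the covering. Choosing the centres in $Q$ handles both. Boundedness of $T$ is not really needed beyond ensuring each image is a set in $\R^n$. Measurability issues do not arise, since Hausdorff outer measure is used throughout.
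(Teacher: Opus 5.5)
Your proof is correct and follows the paper's argument: you use the same uniformizing decomposition $P_m=\{p\in P:\ c_{p,\alpha}\le m,\ r_{p,\alpha}\ge 1/m\}$, the bound $k/\alpha$ for the dimension of a H\"older image on each piece, countable stability, and then let $\alpha\uparrow1$. The paper instead gets a global $\alpha$-H\"older bound on each $P_m$ by using boundedness of $T$ for pairs at distance $\ge 1/m$ and quotes the H\"older-image bound; you split $P_m$ further into pieces of diameter $<1/(2m)$, which makes boundedness unnecessary as you note, and you prove that bound by an explicit covering count.
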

\begin{proof}
Fix $\alpha\in(0,1)$ and write $P=\bigcup_{j\ge1}P_j$, where $P_j=\{p\in P:c_{p,\alpha}\le j,\ r_{p,\alpha}\ge j^{-1}\}.$ For any $p, q\in P_j,$ if $|p-q|<j^{-1},$ by the assumption we have $|T(q)-T(p)|\le j|q-p|^\alpha.$ If $|p-q|\geq j^{-1}$, boundedness of $T$ implies $|T(q)-T(p)|\le2\bigl(\sup_P|T|\bigr)j^\alpha|q-p|^\alpha.$ Thus $T|_{P_j}$ is $\alpha$-H\"older and $\dim_{\HH}T(P_j)\le k/\alpha$. Taking the countable union and letting $\alpha\uparrow1$ proves the assertion.
\end{proof}

For a segment subdifferential, the regularity estimate for $Dv$ excludes intersections of its relative interior with $\overline{\Omega^*}$.

\begin{lemma}\label{middle}
Let $x_0\in\Omega$ and $\partial u(x_0)=[p_1,p_2]$, where $p_1\ne p_2$. Assume \eqref{gradient} at both extreme points for one fixed $\varepsilon\in(0,1/n)$. Then
\[
[p_1,p_2]\cap\overline{\Omega^*}=\{p_1,p_2\}.
\]
\end{lemma}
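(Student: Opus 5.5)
The plan is to argue by contradiction, pitting a lower bound on the Monge--Amp\`ere measure of $v$ near an interior point of the segment against an upper bound coming from \eqref{gradient} and convexity. We normalize as in Section~\ref{radial}: $x_0=0$, $u(0)=0$, and $v\ge0$ with $\{v=0\}=[p_1,p_2]$ and $Dv=0$ on the segment. Put $\tau=(p_2-p_1)/|p_2-p_1|$. Suppose some $q=\lambda p_1+(1-\lambda)p_2$ with $\lambda\in(0,1)$ lies in $\overline{\Omega^*}$. Since $q\in\partial u(x_0)$, Theorem~\ref{strictness} gives a neighbourhood $\mathcal N_q$ on which $\mu_v=\chi_{\Omega^*}\,dy$. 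The cone representation \eqref{cones} at $q$ consists of full-dimensional cones, so $|B_r(q)\cap\Omega^*|\ge c\,r^n$ for small $r$. Hence
\[
|\partial v(B_r(q))|=\mu_v(B_r(q))\ge c\,r^n .
\]

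\emph{Step 1: upper bound for $v$ on a tube around the segment.} Let $y=q'+w$, where $q'=\lambda'p_1+(1-\lambda')p_2$ with $\lambda'\in[\lambda_0,1-\lambda_0]$ and $|w|\le 2r$. Then $y=\lambda'(p_1+w)+(1-\lambda')(p_2+w)$, and convexity together with \eqref{gradient} (integrated from $p_i$, where $v(p_i)=0$ and $Dv(p_i)=0$) gives
\[
v(y)\le\lambda' v(p_1+w)+(1-\lambda')v(p_2+w)\le C|w|^{2-\varepsilon}\le Cr^{2-\varepsilon}.
\]
So $0\le v\le Cr^{2-\varepsilon}$ on the tube $\{q'+w\}$ of radius $2r$ around the middle part of the segment. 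We choose $\lambda_0$ so that this middle part contains $q\pm\ell\tau$ for a fixed $\ell>0$ independent of $r$.

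\emph{Step 2: anisotropic bound on the gradient image.} Take $y\in B_r(q)$ and $\xi\in\partial v(y)$. In any unit direction $e$, the supporting inequality $v(y+re)\ge v(y)+r\,\xi\cdot e$ and Step~1 give $|\xi|\le Cr^{1-\varepsilon}$. Along $\tau$ we use the long lever arm: $v(y\pm\ell\tau)\ge v(y)\pm\ell\,\xi\cdot\tau$, and $y\pm\ell\tau$ still lies in the tube, so $|\xi\cdot\tau|\le Cr^{2-\varepsilon}/\ell$. Therefore $\partial v(B_r(q))$ lies in a box with $n-1$ sides of length $Cr^{1-\varepsilon}$ and one side of length $Cr^{2-\varepsilon}$, and
\[
|\partial v(B_r(q))|\le C\,r^{(n-1)(1-\varepsilon)+2-\varepsilon}=C\,r^{\,n+1-n\varepsilon}.
\]

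\emph{Step 3: conclusion.} Comparing the two bounds gives $c\,r^n\le C\,r^{n+1-n\varepsilon}$. Since $\varepsilon<1/n$, the exponent $n+1-n\varepsilon$ exceeds $n$, so this fails for small $r$. This contradiction shows $q\notin\overline{\Omega^*}$. The endpoints themselves lie in $\overline{\Omega^*}$ by \eqref{extremes}, which gives the stated equality.

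The main obstacle is Step~2. The isotropic bound $|Dv|\le Cr^{1-\varepsilon}$ alone only gives image measure $\le Cr^{n(1-\varepsilon)}$, which is too weak to contradict $c\,r^n$. The extra factor $r$ in the $\tau$-direction comes from $v$ vanishing along a segment of fixed length through $q$, and this is precisely where the hypothesis $\varepsilon<1/n$ enters. A minor point to verify is that $B_r(q)\subset\mathcal N_q$ and that the tube stays within the radii $r_\varepsilon$ of \eqref{gradient}; both hold for $r$ small.
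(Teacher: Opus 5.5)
Your proposal is correct and follows essentially the same argument as the paper. You integrate \eqref{gradient} at both endpoints and use convexity to get $v\le Cr^{2-\varepsilon}$ on a tube around the segment, and you use a fixed axial lever arm $\ell$ to get $|\xi\cdot\tau|\le C\ell^{-1}r^{2-\varepsilon}$ and $|\xi|\le Cr^{1-\varepsilon}$ on the subgradient image; the bound $\mu_v\le Cr^{n+1-n\varepsilon}$ then contradicts the cone lower bound $cr^n$. The differences are cosmetic: the paper works with a thin cylinder $\mathcal T_{\rho/2}\supset B_{\rho/4}(p_3)$ and needs only the one-sided inequality \eqref{measure}, whereas you use the ball $B_r(q)$ and the local equation from Theorem~\ref{strictness}.
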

\begin{proof}
Both extreme points belong to $\overline{\Omega^*}$ by \eqref{extremes}. Let
\[
\tilde{v}(y)=v(y)-x_0\cdot y+u(x_0),\qquad
\tau=(p_2-p_1)/|p_2-p_1|.
\]
Then $\tilde{v}\ge0$ and $\tilde{v}=0$ on $[p_1,p_2]$. By integrating the $C^{1,1-\varepsilon}$ estimates at $p_1$ and $p_2$, we obtain $r_0>0$ and $C>0$ such that
\begin{equation}\label{growth}
0\le\tilde{v}(p_i+Y)\le C|Y|^{2-\varepsilon}
\qquad(|Y|<r_0,\ i=1,2).
\end{equation}
Suppose that $p_3\in(p_1,p_2)\cap\overline{\Omega^*}$. Choose $\ell>0$ with $p_3+[-2\ell,2\ell]\tau\subset(p_1,p_2)$. By convexity and \eqref{growth} we have
\begin{equation}\label{tube}
0\le\tilde{v}(p_3+s\tau+Y)\le C|Y|^{2-\varepsilon}
\qquad(|s|\le2\ell,\ Y\perp\tau,\ |Y|<r_0).
\end{equation}
For $0<\rho<\min\{r_0/4,\ell\}$, set
\[
\mathcal T_\rho=\{p_3+s\tau+Y:|s|<\ell,\ Y\perp\tau,\ |Y|<\rho\}.
\]
By convexity and \eqref{tube} again, for $y\in\mathcal T_{\rho/2}$ and $\xi\in\partial \tilde{v}(y)$,
\[
|\xi\cdot\tau|\le C\ell^{-1}\rho^{2-\varepsilon},\qquad
|\xi-(\xi\cdot\tau)\tau|\le C\rho^{1-\varepsilon}.
\]
It follows that
\begin{equation}\label{tubemass}
\mu_v(\mathcal T_{\rho/2})\le C\rho^{n+1-n\varepsilon}.
\end{equation}
By \eqref{cones} at $p_3$, $|\Omega^*\cap B_t(p_3)|\ge c_{p_3}t^n$ for small $t>0$. Since $B_{\rho/4}(p_3)\subset\mathcal T_{\rho/2}$, \eqref{measure} and \eqref{tubemass} imply $c\rho^n\le\mu_v(\mathcal T_{\rho/2}) \le C\rho^{n+1-n\varepsilon}.$ This is impossible as $\rho \rightarrow 0$, because $n\varepsilon<1$.
\end{proof}

We next prove local regularity for segment subdifferential sets on two open target facets.

\begin{proposition}\label{regularity}
Suppose $x_0\in A$, $\partial u(x_0)=[p_1,p_2]$, and both $p_i$ lie in open boundary facets of $\Omega^*$. There is $r>0$ such that
\[
\Sigma\cap B_r(x_0)=A\cap B_r(x_0)
\]
is a smooth hypersurface.
\end{proposition}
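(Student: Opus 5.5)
Near each $p_i$ the target is a halfspace, so locally the problem looks like transport into a union of two separated convex pieces. The plan is to reduce to the two-component interface regularity of Chen--Liu~\cite[Section~5]{CL3}, in the flat-target form of Chen--Li--Liu~\cite[Appendix, Proposition~6.1]{CLL} collected in Appendix~\ref{flat}.

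\emph{Localization.} By upper semicontinuity of $\partial u$ and Lemma~\ref{extremelocation}, for $r$ small and $x\in B_r(x_0)$ the extreme points of $\partial u(x)$ lie in $\overline{\Omega^*}\cap(B_\rho(p_1)\cup B_\rho(p_2))$. Choose $\rho$ so that $\Omega^*\cap B_{2\rho}(p_i)=H_i\cap B_{2\rho}(p_i)$, where $H_i$ is an open halfspace with boundary hyperplane $\Pi_i$. Theorem~\ref{estimate} gives \eqref{gradient} at $p_1$ and $p_2$, so Lemma~\ref{middle} yields $(p_1,p_2)\cap\overline{\Omega^*}=\varnothing$. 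Compactness then shows that for $x$ near $x_0$, $\partial u(x)$ is either a point or a segment whose endpoints lie one in each $B_\rho(p_i)$. A segment with both ends in the same ball would meet a halfspace in its interior or lie in a line through a facet, which the same tube-mass argument excludes. In particular $\dim\partial u(x)\le1$ near $x_0$.

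Split $B_r(x_0)=U_1\cup U_2\cup A$, where $U_i=\{x:\partial u(x)\subset B_\rho(p_i)\}$. The sets $U_i$ are open, $A$ is relatively closed, and hence $\Sigma\cap B_r=A\cap B_r$. On $U_i$, $u$ solves a transport problem to a convex (halfspace) target with $\mu_v=\chi_{H_i}$ near $p_i$ by Theorem~\ref{strictness}.

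\emph{Obliqueness and the interface.} The main step is to show that the two potentials $u_i:=\sup\{L: L\le u \text{ on } U_i,\ DL\in \overline{H_i}\cap B_\rho(p_i)\}$ are $C^{1,\beta}$ up to $A$ and that $A=\{u_1=u_2\}$ with $D(u_1-u_2)\ne0$. First I would verify an obliqueness condition: the segment direction $\tau=p_2-p_1$ is transversal to both $\Pi_1$ and $\Pi_2$. If $\tau$ were tangent to $\Pi_1$, then $p_1+t\tau$ would lie in $\overline{H_1}$ for small $t$, contradicting Lemma~\ref{middle}. So $\tau$ points strictly out of $H_1$ at $p_1$, and into the complement of $H_2$ at $p_2$. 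Next, strict convexity (Theorem~\ref{strictness}) and the section estimates of Chen--Liu--Wang~\cite{CLW3} at flat boundaries give $C^{1,\beta}$ regularity of $u_i$ on $\overline{U_i}\cap B_r$, as in the flat-target adaptation of \cite{CLL}. The interface $A$ is then the zero set of $u_1-u_2$, with gradient jump $Du_2-Du_1$ parallel to $\tau$ and bounded away from zero. By the implicit function theorem, $A$ is a $C^{1,\beta}$ hypersurface.

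\emph{Higher regularity.} For smoothness I would bootstrap via the perturbation method of \cite{CLW3}. Each $u_i$ solves a Monge--Amp\`ere equation with second boundary condition $Du_i(A)\subset\Pi_i$, coupled through $u_1=u_2$ on $A$. Obliqueness gives a uniformly oblique nonlinear boundary problem, and Schauder theory then yields $C^{2,\alpha}$ and, iterating, $C^\infty$ regularity of $u_i$ and of $A$.

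I expect the $C^{1,\beta}$ step, that is, establishing uniform density and section estimates at points of $A$ where the target is only a halfspace and the source is arbitrary, to be the main obstacle. This is exactly where the appendix argument is needed.
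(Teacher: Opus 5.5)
Your skeleton matches the paper's: one-sided potentials on the two half-balls, $A=\{u_1=u_2\}$ via the implicit function theorem, obliqueness from Lemma~\ref{middle}, then \cite{CLW3}. However, the passage from $C^{1,\beta}$ to smoothness has a genuine gap.

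\textbf{The higher-regularity step.} With $u_i$ only $C^{1,\beta}$ for some small $\beta$, and $A$ only $C^{1,\beta}$, there is no uniformly oblique problem to which Schauder theory applies. The Monge--Amp\`ere operator becomes uniformly elliptic only once $D^2u_i$ is bounded above and below. Moreover, the perturbation argument of \cite[Section~4]{CLW3}, which is Proposition~\ref{boundary}(ii), needs the source boundary (here $A$) to be $C^{1,\beta}$ with $\beta$ close to $1$. The missing step is the intermediate bootstrap through Proposition~\ref{boundary}(i):
\begin{enumerate}
\item[(a)] Since the target is a flat facet and $\nu_i(x_0)\cdot\nu_i^*>0$, each $u_i$ is $C^{1,1-\varepsilon}$ up to $A$.
\item[(b)] The normal formula $\nu_i=(Du_i-Du_{3-i})/|Du_i-Du_{3-i}|$ then upgrades $A$ itself to $C^{1,1-\varepsilon}$.
\item[(c)] Only now does part~(ii) give $u_i\in C^{2,\alpha}$ up to $A$. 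The linearized problem is then uniformly elliptic and oblique, and the bootstrap of \cite[Remark~4.4]{CLW3} applies.
\end{enumerate}
This alternation between the potentials and the interface is what carries the higher regularity, not a direct Schauder argument.

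\textbf{The $C^{1,\beta}$ and structural steps.} You have these in the wrong order, and the appendix is not the tool for $C^{1,\beta}$. Proposition~\ref{boundary} \emph{assumes} $u\in C^{1,\beta}(\overline\Omega)$, so it cannot produce it. Also, compactness plus the tube-mass argument does not give that $\partial u(x)$ is a point or a segment near $x_0$. Theorem~\ref{estimate}, and hence Lemma~\ref{middle}, is available only when $\dim\partial u(x)\le1$, so nothing in your argument excludes, say, a thin triangle with two vertices near $p_1$.

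What excludes it is strict convexity of $v$ on each closed half-ball, i.e.\ $u_i\in C^1$. The paper proceeds as follows:
\begin{enumerate}
\item[(a)] It defines $u_i(x)=\sup_{y\in V_i}\{x\cdot y-v(y)\}$ with $V_i=\overline{\Omega^*\cap B_\rho(p_i)}$. This needs no prior knowledge of $A$, its dual is exactly $v$ on the convex set $V_i$ with $\mu_v=\chi_{V_i}$, and it is the form required by the hypotheses of Proposition~\ref{boundary}.
\item[(b)] It obtains $u_i\in C^{1,\beta}$ from the Caffarelli--McCann convex-target argument: affine doubling \cite[Lemma~7.5]{CM}, localization of centred sections by Lemma~\ref{localization}, and \cite[Theorem~7.13 and Remark~7.10]{CM}.
\item[(c)] Only afterwards does it deduce $u=\max\{u_1,u_2\}$, $A=\{u_1=u_2\}$, and $\partial u=[Du_1,Du_2]$ on $A$, using Lemma~\ref{middle} and \cite[Proposition~3.1]{CLL}.
\end{enumerate}

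Your own definition of $u_i$, via affine minorants on $U_i\subset B_r(x_0)$, is not adequate here. Its Legendre dual agrees with $v$ only where $Dv\in\overline{U_i}$. Since you choose $r$ after $\rho$, this is typically a small part of the half-ball, so the equation $\mu=\chi_{V_i}$ on a convex set, which the convex-target theory needs, is lost.
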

\begin{proof}
Choose disjoint closed half-balls $V_i=\overline{\Omega^*\cap B_\rho(p_i)}$ and set $u_i(x)=\sup_{y\in V_i}\{x\cdot y-v(y)\}$ for $i=1,2.$ Theorem~\ref{strictness} and Lemma~\ref{localization} provide the local equation $\mu_v=\chi_{V_i}\,dy$ and uniform localization of centered sections near $p_i$. The affine doubling estimate \cite[Lemma~7.5]{CM} and the quantitative convexity argument \cite[Theorem~7.13, proof of (7.15)--(7.16)]{CM}, followed by \cite[Remark~7.10]{CM}, yield $u_i\in C^{1,\beta}$ near $x_0$. Lemma~\ref{middle} and the local separation argument of \cite[Proposition~3.1]{CLL} give
\[
u=\max\{u_1,u_2\},\qquad A=\{u_1=u_2\},\qquad \partial u(x)=[Du_1(x),Du_2(x)]\quad(x\in A)
\]
in a smaller ball, with $Du_i(A)$ contained in the corresponding open facet. Thus $A$ is a relatively closed $C^{1,\beta}$ hypersurface there. If $\nu_i$ and $\nu_i^*$ are the inner source and target normals, Lemma~\ref{middle} implies
\[
\nu_1(x_0)\cdot\nu_1^*=\frac{(p_1-p_2)\cdot\nu_1^*}{|p_1-p_2|}>0,\qquad \nu_2(x_0)\cdot\nu_2^*=\frac{(p_2-p_1)\cdot\nu_2^*}{|p_1-p_2|}>0.
\]
Proposition~\ref{boundary}(i), the formula $\nu_i=(Du_i-Du_{3-i})/|Du_i-Du_{3-i}|$, and part~(ii) yield $u_i\in C^{2,\alpha}$ up to $A$. The boundary Schauder bootstrap in \cite[Remark~4.4]{CLW3} then proves smoothness. The local description also gives $\Sigma=A$.
\end{proof}

It remains to characterize the points $x\in\overline{\Sigma}\cap\Omega$ for which $\partial u(x)$ is a singleton.

\begin{lemma}\label{cusp}
If $x_0\in\Sigma$ and $\partial u(x_0)=\{p_0\}$, then $p_0$ belongs to a boundary face of $\Omega^*$ of dimension at most $n-2$.
\end{lemma}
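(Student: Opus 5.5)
We argue by contradiction and case analysis on the location of $p_0\in\overline{\Omega^*}$; note $p_0\in\overline{\Omega^*}$ by \eqref{extremes}. There are three alternatives: $p_0\in\Omega^*$, $p_0$ lies in an open boundary facet, or $p_0$ lies in the target skeleton. We rule out the first two. In both, the key input is Theorem~\ref{strictness}. Since $p_0\in\partial u(x_0)$, there is a neighbourhood $\mathcal N_{p_0}$ on which $v\in C^1$, $Dv(p_0)=x_0$, and $\mu_v=\chi_{\Omega^*}dy$. Since $x_0\in\Sigma=\overline A\cap\Omega$, there are points $x_j\in A$, $x_j\to x_0$, with $\partial u(x_j)$ nondegenerate. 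By upper semicontinuity of $\partial u$ and $\partial u(x_0)=\{p_0\}$, we have $\partial u(x_j)\subset B_{r_j}(p_0)$ with $r_j\to0$. So for large $j$ the whole set $\partial u(x_j)$ sits inside $\mathcal N_{p_0}$.

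\emph{Interior case.} If $p_0\in\Omega^*$, shrink $\mathcal N_{p_0}$ to a ball $B\subset\Omega^*$ on which $\det D^2v=1$ in the Alexandrov sense. By Theorem~\ref{strictness}, $v$ is strictly convex there: $\partial v(p)$ is a singleton for every $p$, and a segment in the graph of $v$ would make $u$ non-strictly convex. Caffarelli's interior theory then makes $v$ smooth and uniformly convex in $B$. Hence $Dv$ is a diffeomorphism of $B$ onto an open set $Dv(B)\ni x_0$. Also $\partial v(p)=\{Dv(p)\}$ forces $\partial u(x)$ to be a singleton, namely $\{(Dv)^{-1}(x)\}$, for $x$ in the part of $Dv(B)$ near $x_0$. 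To see this, take $y\in\partial u(x)$ close to $p_0$; then $x\in\partial v(y)$, so $y\in B$ and $y=(Dv)^{-1}(x)$. This contradicts $x_j\in A$.

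\emph{Facet case.} If $p_0$ lies in an open facet, then near $p_0$ the target is a half-ball $V=\overline{\Omega^*\cap B_\rho(p_0)}$. Define $u_V(x)=\sup_{y\in V}\{x\cdot y-v(y)\}$ as in Proposition~\ref{regularity}. The Caffarelli--McCann argument cited there, using Lemma~\ref{localization} and the local equation, gives $u_V\in C^{1,\beta}$ near $x_0$. We claim $u=u_V$ in a neighbourhood of $x_0$. The inequality $u\ge u_V$ follows from \eqref{duality} and \eqref{contacts}. For the reverse inequality, note that for $x$ near $x_0$ every $y\in\partial u(x)$ lies near $p_0$. Moreover $\ext\partial u(x)\subset\overline{\Omega^*}\cap B_\rho(p_0)\subset V$. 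The supporting hyperplanes with slope $y\in\ext\partial u(x)$ then give $u(x)=x\cdot y-v(y)\le u_V(x)$. Hence $u$ is $C^1$ near $x_0$, so $A$ misses a neighbourhood of $x_0$, contradicting $x_0\in\Sigma$.

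\emph{Main obstacle.} The delicate point is the identification $u=u_V$, and more generally making sure that subgradients $\partial u(x_j)$ near $p_0$ cannot exit $V$ through regions where $v$ is not controlled. The concern is that $\ext\partial u(x_j)$ might lie outside $\overline{\Omega^*}$ near the facet. This is excluded by \eqref{extremes} together with the convexity of the half-ball $V$: $\partial u(x_j)=\co\ext\partial u(x_j)$ is contained in $V$, because $V$ is convex. If the $C^{1,\beta}$ citation for a single half-ball needs justification, we would instead invoke the flat-facet results in Appendix~\ref{flat} (Proposition~\ref{boundary}) directly. Having excluded both cases, $p_0$ lies in a boundary face of dimension at most $n-2$.
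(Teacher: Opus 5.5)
There is a gap in your interior case. You assert that, by Theorem~\ref{strictness}, $v$ is strictly convex on $B$, because ``a segment in the graph of $v$ would make $u$ non-strictly convex.'' That uses the wrong duality. Theorem~\ref{strictness} gives strict convexity of $u$, which corresponds to $v\in C^1$. If $v$ agrees with an affine function on a segment $I\subset B$, a common subgradient $x\in\Omega$ gives $I\subset\partial u(x)$ by \eqref{contacts}. That is a singular point of $u$, not a flat piece of its graph.

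In fact, by \eqref{contacts}, $v(y)=x_j\cdot y-u(x_j)$ on each nondegenerate set $\partial u(x_j)$, and these sets lie in $B$ for large $j$. So strict convexity of $v$ on $B$ already contradicts $x_0\in\Sigma$: it is the whole content of the interior case, and you have assumed it.

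What you do have is strict convexity of $v$ at the single point $p_0$, since its contact set with the supporting plane of slope $x_0$ is $\partial u(x_0)=\{p_0\}$. Hence a small section $S_h[v](p_0)$ is compactly contained in $B$. On that section, Caffarelli's theorem that contact sets of solutions of $\det D^2v=1$ have no interior extreme points gives strict convexity of $v$, hence smoothness. Your diffeomorphism argument then works inside the section.

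A simpler repair is to apply Caffarelli's interior $C^{1,\alpha}$ theory to $u$ itself, which \emph{is} strictly convex. For $x$ near $x_0$, $\partial u(x)=\co\ext\partial u(x)$ lies in a convex piece of $\overline{\Omega^*}$ near $p_0$, and $\partial\Omega^*$ is null there. Then \eqref{contacts} and $(Dv)_\#\chi_{\Omega^*}=\chi_\Omega$ give $\mu_u=dx$ near $x_0$. This handles the interior and facet cases at once. The paper simply cites Figalli--Kim for the interior case.

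Your facet case is correct but takes a heavier route than the paper. The identification $u=u_V$ near $x_0$ is valid, by Lemma~\ref{extremelocation}, upper semicontinuity and convexity of $V$. The Caffarelli--McCann $C^{1,\beta}$ argument from Proposition~\ref{regularity} applies, since $p_0$ is the unique, hence extreme, point of $\partial u(x_0)$, so Lemma~\ref{localization} is available.

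The paper uses no regularity theory here. After normalizing $x_0=p_0=0$ and $u\ge0$, upper semicontinuity and $Du\in\Omega^*$ a.e.\ give $D_{\nu^*}u\ge0$ a.e.\ near $0$. So $u$ is nondecreasing along $\nu^*$, and $0\le u(t\nu^*)\le u(0)=0$ for small $t<0$, contradicting strict convexity. This is shorter and proves more: no differentiability point of $u$ in $\Omega$ has its gradient on an open facet, whether or not it lies in $\Sigma$.

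Your fallback to Proposition~\ref{boundary} would not help. That proposition presupposes $u\in C^{1,\beta}$ up to a $C^{1,\beta}$ source boundary through the base point, with $Du(\partial\Omega)\subset\partial\Omega^*$ and obliqueness. Here $x_0$ is an interior point with no such structure, and $C^{1}$ regularity near $x_0$ is exactly what you need to obtain.
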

\begin{proof}
By \eqref{extremes}, $p_0\in\overline{\Omega^*}$. If $p_0\in\Omega^*$, \cite[Proposition~2 and proof of Theorem~3.3]{FK1} makes $u$ smooth near $x_0$, contrary to $x_0\in\overline A$. Suppose instead that $p_0$ lies in an open facet with inward normal $\nu^*$. Translate $(x_0,p_0)$ to $(0,0)$ and subtract the supporting plane, so that $u\ge0$ and $u(0)=0$. For small $\rho>0$, $\Omega^*\cap B_\rho\subset\{y:y\cdot\nu^*\ge0\}.$ Upper semicontinuity at $\partial u(0)=\{0\}$ provides $B_r\Subset\Omega$ with $Du(B_r)\subset\Omega^*\cap B_\rho$ almost everywhere. Thus $D_{\nu^*}u\ge0$ almost everywhere in $B_r$. Absolute continuity on lines and continuity imply $0\le u(t\nu^*)\le u(0)=0$ for $-r<t<0,$ contradicting local strict convexity.
\end{proof}

\begin{proof}[Proof of Theorem~\ref{complete}]
Let $R$ be the set of points $x\in A$ for which $\partial u(x)$ is a segment with both extreme points on open facets. Proposition~\ref{regularity} shows that $R$ is relatively open in $\Sigma$ and that $\Sigma$ is locally smooth at every point of $R$. Hence $E=\Sigma\setminus R$ is relatively closed in $\Sigma$. The stratification theorem~\cite[Theorem~4.1]{AAC} implies
\[
\dim_{\HH}\{x\in\Omega:\dim\partial u(x)\ge2\}\le n-2.
\]
For $x\in E\cap A$ with $\dim\partial u(x)=1$, write $\partial u(x)=[p_1,p_2]$. At least one extreme point of its subdifferential lies on the target skeleton. Indeed by Lemma \ref{middle} we have $\partial u(x)\cap \overline{\Omega^*}=\{p_1, p_2\}.$ If both $p_1, p_2$ are on open facets, then it follows from Proposition~\ref{regularity} that $x\in R.$ If $x\in E\setminus A$, then $u$ is differentiable at $x$. Since $x\in\Sigma$, Lemma~\ref{cusp} implies that $Du(x)$ belongs to a boundary face of $\Omega^*$ of dimension at most $n-2$.

For each relatively open boundary face $F$ with $\dim F\le n-2$, define
\[
P_F=
\bigcup_{\substack{x\in E\\ \dim\partial u(x)\le1}}
\bigl(F\cap\ext\partial u(x)\bigr).
\]
Theorem~\ref{strictness} gives $Dv(p)=x$ whenever $p\in\partial u(x)$ and $x\in\Omega$. The preceding two cases show that every $x\in E$ with $\dim\partial u(x)\le1$ has an extreme subgradient in one of these faces. Hence
\[
E\subset
\{x\in\Omega:\dim\partial u(x)\ge2\}
\cup\bigcup_{\dim F\le n-2}Dv(P_F).
\]

For every $p\in P_F$ and $\alpha\in(0,1)$, Theorem~\ref{estimate} yields constants $C_{p,\alpha},r_{p,\alpha}>0$ such that
\[
|Dv(q)-Dv(p)|
\le C_{p,\alpha}|q-p|^\alpha
\qquad(q\in P_F\cap B_{r_{p,\alpha}}(p)).
\]
Since $P_F$ lies in an affine plane of dimension $\dim F$ and $Dv(P_F)\subset\Omega$ is bounded, Lemma~\ref{dimension} gives $\dim_{\HH}Dv(P_F)\le\dim F\le n-2.$ There are finitely many boundary faces, so the covering above implies $\dim_{\HH}E\le n-2$.
\end{proof}

\begin{corollary}\label{closure}
Under the hypotheses of Theorem~\ref{complete},
\[
\dim_{\HH}(\overline A\cap\Omega)\le n-1.
\]
\end{corollary}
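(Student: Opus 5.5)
We deduce the corollary directly from Theorem~\ref{complete}. Recall that $\Sigma=\overline A\cap\Omega$. The theorem decomposes it as $\Sigma=(\Sigma\setminus E)\cup E$, where $E$ is relatively closed and satisfies $\dim_{\HH}E\le n-2$. At every point of $\Sigma\setminus E$, the set $\Sigma$ is locally a smooth $(n-1)$-dimensional hypersurface. The plan is to bound the Hausdorff dimension of each piece separately. Since Hausdorff dimension is countably stable, $\dim_{\HH}\Sigma$ is then the maximum of the two bounds.

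For the regular piece $\Sigma\setminus E$, each point $x$ has a radius $r_x>0$ such that $\Sigma\cap B_{r_x}(x)$ is a smooth embedded hypersurface. Hence $\dim_{\HH}\bigl(\Sigma\cap B_{r_x}(x)\bigr)\le n-1$. The set $\Sigma\setminus E$ is a subset of $\R^n$, so it is second countable and Lindel\"of. We can therefore extract countably many balls $B_{r_{x_k}}(x_k)$ covering $\Sigma\setminus E$. Each piece has dimension at most $n-1$, so countable stability gives $\dim_{\HH}(\Sigma\setminus E)\le n-1$.

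Combining the two pieces gives
\[
\dim_{\HH}(\overline A\cap\Omega)\le\max\{n-1,\dim_{\HH}E\}\le\max\{n-1,n-2\}=n-1.
\]
There is no real obstacle here. The only points to check are that the local hypersurface charts are countably many, which the Lindel\"of argument supplies, and that a smooth $(n-1)$-dimensional submanifold has Hausdorff dimension at most $n-1$. The latter follows from locally Lipschitz parametrization by open subsets of $\R^{n-1}$.
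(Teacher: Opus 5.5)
Your proposal is correct and follows essentially the same argument as the paper: split $\Sigma=\overline A\cap\Omega$ into $\Sigma\setminus E$ and $E$, cover $\Sigma\setminus E$ by countably many smooth hypersurface charts so that it has Hausdorff dimension at most $n-1$, and use $\dim_{\HH}E\le n-2$ from Theorem~\ref{complete}. Your Lindel\"of step and the locally Lipschitz parametrization of the charts just spell out details the paper leaves implicit.
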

\begin{proof}
The relatively open set $\Sigma\setminus E$ has a countable cover by smooth hypersurface charts. Its Hausdorff dimension is at most $n-1$, whereas $\dim_{\HH}E\le n-2$.
\end{proof}

\section{The free boundary in optimal partial transport}
\label{freeboundaries}

We prove Theorem~\ref{partial} by applying the preceding estimates to the partial transport.

Let $\Omega,\Omega^*\subset\mathbb R^n$ be bounded open sets separated by a hyperplane, and assume that $\Omega^*$ is polyhedral. Fix $0<m<\min\{|\Omega|,|\Omega^*|\}$. Denote the active source and target by $U\subset\Omega$ and $V\subset\Omega^*$, respectively. We write $\mathcal F=\partial U\cap\Omega$ and denote the optimal partial transport plan by $\gamma$.

By \cite[Theorem~4.3 and Corollary~4.5]{CM}, choose the convex potential with quadratic obstacles and set $\Omega_m^*=V\cup\operatorname{int}(\Omega\setminus U)$. Then
\begin{equation}\label{augmented}
(Du)_\#\chi_\Omega=\chi_{\Omega_m^*},
\qquad (Dv)_\#\chi_{\Omega_m^*}=\chi_\Omega,
\end{equation}
and
\[
v(y)=\sup_{x\in\Omega}\{x\cdot y-u(x)\},
\qquad u=v^*\quad\text{in }\Omega.
\]
For a constant $c_m\in\R$, the source obstacle relation is
\begin{equation}\label{obstacle}
u\ge\tfrac12|x|^2-c_m\quad\text{in }\Omega,
\qquad U=\{x\in\Omega:u(x)>\tfrac12|x|^2-c_m\}.
\end{equation}
In particular, $u(x)=\tfrac12|x|^2-c_m$ on $\Omega\setminus U$. The local argument in \cite[Proposition~5.2]{CM} shows that $\partial U\cap\Omega$ is a Lipschitz graph over the separating hyperplane near each of its points. Interchanging source and target yields the same conclusion for $\partial V\cap\Omega^*$. Both sets have zero Lebesgue measure, so the inactive sets may be replaced by their interiors in \eqref{augmented}. For every differentiability point $x\in U$ of $u$, let $y=Du(x)$. By \cite[Corollary~2.4 and Example~2.5]{CM},
\begin{equation}\label{balls}
B_{|x-y|}(y)\cap\Omega\subset U,
\qquad B_{|x-y|}(x)\cap\Omega^*\subset V.
\end{equation}
These inclusions also hold for limits of such pairs.

Let $x\in\mathcal F$. Approaching $x$ through differentiability points on either side of the free boundary, we obtain
\begin{equation}\label{active}
x\in\partial u(x),\qquad
\partial u(x)\cap\overline V\ne\varnothing.
\end{equation}
Moreover,
\begin{equation}\label{activeextremes}
\operatorname{ext}\partial u(x)
\subset\{x\}\cup\overline V.
\end{equation}
Indeed, each extreme point of the subdifferential is a limit of gradients at nearby differentiability points. The inactive gradients converge to $x$, and the active gradients belong to $V$.

We first establish strict convexity for $u$ and the local equation satisfied by $v$ at the active extreme points of its subdifferential.

\begin{lemma}\label{partialgeometry}
The function $u$ is locally strictly convex in $\Omega$. For $x\in\Omega$ and $p\in\partial u(x)$, one has $\partial v(p)=\{x\}$, and $v$ is $C^1$ near $p$. If $x\in\mathcal F$ and $p\ne x$ is an extreme point of $\partial u(x)$, then $p\in\partial\Omega^*$ and, for some $\rho>0$,
\begin{equation}\label{partialequation}
B_\rho(p)\cap\Omega^*\subset V,
\qquad \mu_v=\chi_{\Omega^*}\,dy\quad\text{in }B_\rho(p).
\end{equation}
\end{lemma}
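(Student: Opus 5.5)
The plan is to reduce Lemma~\ref{partialgeometry} to Theorem~\ref{strictness}, applied to the augmented problem \eqref{augmented}. The augmented target is $\Omega_m^*=V\cup\operatorname{int}(\Omega\setminus U)$, and it is not polyhedral. However, the proof of \eqref{uniqueness} only uses the cone representation \eqref{cones} at an extreme point $p$ of $\partial u(x_0)$, and this representation is needed only up to null sets and inside a small ball. So the first step is to locate the extreme points and show that such a local representation holds at each of them.

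By \eqref{activeextremes} (and its analogue in $U$, which comes from the same limit argument), every extreme point $p$ is either $p=x$ with $x\in\Omega\setminus U$ or on $\mathcal F$, or $p\in\overline V$. The two cases are handled separately.

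\emph{Case $p=x$.} Near $x$, the target $\Omega_m^*$ coincides with the inactive set $\Omega\setminus U$ up to a null set, and $u=\frac12|x|^2-c_m$ there. Since $\Omega^*$ is separated from $\Omega$ by a hyperplane, $V$ stays away from $x$. Strict convexity along segments through $x$ then holds directly from the obstacle relation \eqref{obstacle}: $u\ge\frac12|\cdot|^2-c_m$ is uniformly convex from below with contact at $x$. Hence $\partial v(x)=\{x\}$ without using cones. Concretely, if $z\in\partial v(x)$ with $z\neq x$, then $u$ would be affine on $[x,z]$ while touching the strictly convex obstacle at $x$ from above. This contradicts the obstacle inequality near $x$ along the segment.

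\emph{Case $p\in\overline V$.} Here we need \eqref{balls}, the main geometric input. Take $p$ as a limit of $Du(x_j)$ with $x_j\in U$. Passing to the limit in \eqref{balls} gives $B_{|x-p|}(x)\cap\Omega^*\subset \overline V$. Since $|x-p|\ge\operatorname{dist}(\Omega,\Omega^*)>0$, a small ball $B_\rho(p)$ lies inside $B_{|x-p|}(x)$ up to the boundary sphere. This gives the first claim in \eqref{partialequation}. The sphere is the main obstacle: $p$ lies on $\partial B_{|x-p|}(x)$, so the inclusion only holds in the interior of the big ball. I would try to handle this using that $\partial V\cap\Omega^*$ is a Lipschitz graph and that $V$ is a union of balls $B_{|x'-y'|}(x')\cap\Omega^*$ over nearby pairs. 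Alternatively, first prove $p\in\partial\Omega^*$, so that $V$ and $\Omega^*$ agree near $p$ up to null sets.

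That $p\in\partial\Omega^*$ follows as in Lemma~\ref{cusp}. If $p\in\Omega^*$, then near $p$ the transport looks like an interior problem, and extremality of $p$ together with $x\in\mathcal F$ contradicts the fact that the free boundary is smooth at interior target points, by \cite{CM} and \cite{FK1}. Alternatively, the inclusion $B_{|x-p|}(x)\cap\Omega^*\subset V$ with $p$ on the sphere forces $p$ to lie on $\partial V$, and $\partial V\cap\Omega^*$ cannot meet the extreme gradients of free boundary points. Once $\Omega_m^*$ agrees with $\Omega^*$ in $B_\rho(p)$, the local polyhedral structure \eqref{cones} holds at $p$.

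The final step is to run Steps~1--3 of the proof of Theorem~\ref{strictness} verbatim at every extreme point. This gives \eqref{uniqueness}, and then Step~4 gives local strict convexity, $\partial v(p)=\{x\}$, and $v\in C^1$ near $p$. The measure identity $\mu_v=\chi_{\Omega^*}\,dy$ in $B_\rho(p)$ follows from \eqref{measure} applied to \eqref{augmented}, since $\Omega_m^*=\Omega^*$ there.
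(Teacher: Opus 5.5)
Your treatment of the inactive extreme point $p=x$ is correct: the common supporting plane at $x$ and $z$ has slope $x$, so it is tangent to the obstacle $\tfrac12|\cdot|^2-c_m$ at $x$ and lies strictly below it on $(x,z]$. The gap is in the active case.

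\textbf{The cone representation is not available.} To run Steps~1--3 of Theorem~\ref{strictness} for \eqref{augmented}, you need a cone representation of the non-polyhedral target $\Omega_m^*$ at \emph{every} extreme point $p\in\overline V$ of \emph{every} $\partial u(x_0)$, including $x_0\in U$. Near the Lipschitz free boundary $\partial V\cap\Omega^*$, and near points where it reaches $\partial\Omega^*$, such a representation in general fails. Nothing in your proposal keeps extreme points away from these sets, and your case analysis silently assumes $x\in\mathcal F$.

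Even for $x\in\mathcal F$ the step is not closed. You correctly observe that \eqref{balls} only covers the part of $B_\rho(p)$ inside $B_{|x-p|}(x)$, but neither fix works:
\begin{itemize}
\item $p\in\partial\Omega^*$ does not imply $V=\Omega^*$ near $p$, since the target free boundary may meet $\partial\Omega^*$ at $p$.
\item ``$\partial V\cap\Omega^*$ cannot meet extreme gradients of free boundary points'' is exactly what must be proved.
\item The Figalli--Kim argument you borrow from Lemma~\ref{cusp} only excludes $p\in V$, because points of $\partial V\cap\Omega^*$ are not in the augmented target. It yields $p\in\partial V$, not $p\in\partial\Omega^*$.
\end{itemize}

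\textbf{The ordering is circular.} The paper passes from $p\in\partial V$ to $p\in\partial\Omega^*$ and $B_\rho(p)\cap\Omega^*\subset V$ using the approximation and interior-ball argument of \cite[Lemma~4.2]{CLL}. That argument uses strict convexity of $u$. In your ordering, strict convexity is supposed to be derived from this cone representation.

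\textbf{The missing idea.} Apply Theorem~\ref{strictness} to the \emph{other} potential from the quadratic-obstacle construction of \cite{CM}. This is a convex $\bar u$ with $(D\bar u)_\#\chi_{U\cup\operatorname{int}(\Omega^*\setminus V)}=\chi_{\Omega^*}$ and $\bar u=u$ in $U$.
\begin{itemize}
\item Its target is the full polyhedral domain $\Omega^*$. So Theorem~\ref{strictness} applies verbatim and gives local strict convexity of $u$ in $U$, with no analysis of $\partial V$.
\item On $\Omega\setminus U$, $u$ is the strictly convex quadratic. Hence no affine segment in $\Omega$ can meet $U$ or lie in $\Omega\setminus U$.
\item Step~4 of Theorem~\ref{strictness} then gives $\partial v(p)=\{x\}$ and $v\in C^1$ near $p$.
\end{itemize}

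Only afterwards does one treat $x\in\mathcal F$:
\begin{itemize}
\item \cite[Proposition~2]{FK1} applied to \eqref{augmented} gives $\partial u(x)\cap V=\varnothing$, so $p\in\partial V$ by \eqref{activeextremes}.
\item The interior-ball argument, now legitimately using strict convexity, gives $p\in\partial\Omega^*$ and $B_\rho(p)\cap\Omega^*\subset V$.
\end{itemize}

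\textbf{A smaller point on the measure identity.} \eqref{measure} yields equality only after you shrink $\rho$ so that $\partial v(B_\rho(p))\subset\Omega$, using upper semicontinuity at $\partial v(p)=\{x\}$. You must also exclude the inactive part of $\Omega_m^*$ from $B_\rho(p)$. Do this via $Dv(y)=y$ there together with $Dv(p)=x\ne p$, rather than by assuming $\operatorname{dist}(\Omega,\Omega^*)>0$.
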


\begin{proof}
The quadratic-obstacle construction \cite[Theorem~4.3 and Corollary~4.5]{CM} also provides a convex function $\bar u$ such that
\[
(D\bar u)_\#\chi_{U\cup\operatorname{int}(\Omega^*\setminus V)}=\chi_{\Omega^*},\qquad \bar u=u\quad\text{in }U.
\]
Theorem~\ref{strictness} makes $u$ locally strictly convex in $U$. An affine segment in $\Omega$ cannot meet $U$ and cannot be contained in $\Omega\setminus U$, where $u=\tfrac12|x|^2-c_m$. Hence $u$ is locally strictly convex in $\Omega$. The argument in Step~5 of the proof of Theorem~\ref{strictness} now gives $\partial v(p)=\{x\}$ and $v\in C^1$ near every $p\in\partial u(x)$, $x\in\Omega$.

Now let $x\in\mathcal F$ and $p\ne x$ be an extreme point of $\partial u(x)$. As in \cite[proof of Lemma~4.1]{CLL}, \cite[Proposition~2]{FK1}, applied to \eqref{augmented}, gives $\partial u(x)\cap V=\varnothing.$ Thus $p\in\partial V$ by \eqref{activeextremes}. The approximation and interior-ball argument in \cite[proof of Lemma~4.2]{CLL} apply in every dimension: they use only \eqref{balls}, monotonicity, and strict convexity. Consequently, $p\in\partial\Omega^*,$ and $B_\rho(p)\cap\Omega^*\subset V$ for some $\rho>0$.

By \eqref{measure}, applied to \eqref{augmented}, and upper semicontinuity at $\partial v(p)=\{x\}\subset\Omega$, we have $\mu_v=\chi_{\Omega_m^*}\,dy$ near $p.$ On $\operatorname{int}(\Omega\setminus U)$, inverse transport gives $Dv(y)=y$ almost everywhere. Continuity and $Dv(p)=x\ne p$ exclude this inactive set from a smaller ball about $p$. Together with $B_\rho(p)\cap\Omega^*\subset V$, this proves \eqref{partialequation}.
\end{proof}

When $\partial u(x)$ is a segment, Theorem~\ref{estimate} now applies to the partial transport.

\begin{lemma}\label{partialestimate}
Let $x\in\mathcal F$ and assume that $\dim\partial u(x)=1$. Then
\[
\partial u(x)=[x,p]\quad\text{for some }p\in\partial\Omega^*.
\]
For every $0<\eta<1$ there are $C,r>0$ such that
\begin{equation}\label{partialgradient}
|Dv(y)-x|\le C|y-p|^{1-\eta}\qquad(|y-p|<r).
\end{equation}
\end{lemma}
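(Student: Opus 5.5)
We first identify the segment and then verify the hypotheses of Theorem~\ref{estimate} at its active endpoint.

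\emph{The segment.} Let $x\in\mathcal F$ with $\dim\partial u(x)=1$, and write $\partial u(x)=[p_1,p_2]$. By \eqref{active}, $x\in\partial u(x)$. By \eqref{activeextremes}, each endpoint is either $x$ or lies in $\overline V$.

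Both endpoints cannot lie in $\overline V\subset\overline{\Omega^*}$. If they did, $x$ would be a convex combination of two points of $\overline{\Omega^*}$ lying on the segment. This is impossible when $x\notin\overline{\Omega^*}$: the closed halfspace containing $\overline{\Omega^*}$, given by the separating hyperplane, is convex, and $x\in\Omega$ lies strictly on the other side. We use here that $\Omega$ and $\Omega^*$ are separated, so $\overline{\Omega^*}$ lies in a closed halfspace disjoint from $\Omega$. Hence one endpoint equals $x$, and $\partial u(x)=[x,p]$ with $p\ne x$ extreme. Lemma~\ref{partialgeometry} then gives $p\in\partial\Omega^*$.

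\emph{Hypotheses of Theorem~\ref{estimate} at $p$.} We apply the theorem with $x_0=x$ and $p_0=p$. The theorem was stated under \eqref{source}, but its proof uses only local information. We check each ingredient.
\begin{itemize}
\item Local strict convexity of $u$ in $\Omega$ and $\partial v(q)=\{x\}$ for all $q\in[x,p]$ hold by Lemma~\ref{partialgeometry}.
\item $p\in\ext\partial u(x)$ with $\dim\partial u(x)=1$.
\item The cone representation \eqref{cones} holds at $p\in\partial\Omega^*$, since $\Omega^*$ is polyhedral.
\item The local equation $\mu_v=\chi_{\Omega^*}\,dy$ near $p$ is \eqref{partialequation}.
\end{itemize}

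\emph{The obstacle: the far endpoint.} The proof of Proposition~\ref{monotonicity} used a neighbourhood $\mathcal N$ of the whole segment $L=[0,p_1]$ on which $v\in C^1$ and $\mu_v$ is given. It also required the thin ellipsoid $D\supset L$, with the equation $\det D^2\hat v_k=f_k$ solved in $D$.

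Here the other endpoint is $x$ itself. Near $x$ the equation is not $\chi_{\Omega^*}$, but $\mu_v=\chi_{\Omega_m^*}$ by \eqref{augmented}. Near $y=x$ in the inactive region, $Dv(y)=y$ almost everywhere.

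This causes no harm. The approximation only needs some measure $\mu_v=g\,dy$ on $D$, with $0\le g\le1$ and $g$ bounded below on a neighbourhood of $L$. Here $\mu_v=\chi_{\Omega^*_m}dy$, and $\Omega_m^*$ contains $V$ near $p$ and the inactive set near $x$. Points in the middle of the segment carry zero mass, exactly as in Lemma~\ref{middle}.

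We approximate $\chi_{\Omega_m^*}$ on $D$ by smooth positive $f_k$. We arrange $f_k$ to be radially homogeneous near $p$, giving \eqref{radialdensity}, and arbitrary but smooth elsewhere. We must check the bound $\det D^2u_k=1/f_k\ge1$ only where the cutoff $\zeta$ is nonzero.

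The cutoff localizes every estimate to $B_{2b}(p)$, where $f_k$ is the cone density. The far endpoint enters only through:
\begin{itemize}
\item the fibre bound \eqref{fibers}, which used only convexity and $D_\tau u_k>b$;
\item the volume bound $|S_{k,h}|\le Ch^{n/2}$, which needs $\det D^2u_k\ge1$ globally on the section.
\end{itemize}
For the latter, we only need $f_k\le1$, which we can impose since $\chi_{\Omega_m^*}\le1$. The relative interior points of the segment need not be regular for $v$. We still have $v\in C^1(\mathcal N)$ and $Dv(\mathcal N)\Subset\Omega$ by Lemma~\ref{partialgeometry} and upper semicontinuity, since $\partial v(q)=\{x\}\subset\Omega$ for $q\in[x,p]$.

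\emph{Conclusion.} With these modifications, Proposition~\ref{monotonicity}, Lemma~\ref{sectionlimits} and the good--bad scale argument go through verbatim. This yields \eqref{diameter} and hence \eqref{partialgradient} by Legendre duality, as in Step~5 of the proof of Theorem~\ref{estimate}.

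The main point to verify carefully is this replacement of the target density along the segment. In particular, one must check that Caffarelli's stability estimate and the smooth solvability on $D$ are unaffected by $f_k$ being small, but positive, away from $p$.
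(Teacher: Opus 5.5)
Your proposal is correct and follows the paper's route. Separation together with \eqref{active} and \eqref{activeextremes} forces $\partial u(x)=[x,p]$, and Lemma~\ref{partialgeometry} then lets Theorem~\ref{estimate} be applied to the augmented problem \eqref{augmented}; your check that its proof needs the cone structure only near $p$, and elsewhere only smooth $0<f_k\le1$, is the verification the paper leaves implicit. Drop your remarks that $g$ must be bounded below near $L$ and that points in the middle of the segment carry zero mass: neither is needed or used, and the second would be circular, since its partial-transport version, Lemma~\ref{partialmiddle}, relies on this very lemma.
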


\begin{proof}
Since the domains are separated, \eqref{active} and \eqref{activeextremes} give $\partial u(x)=[x,p]$, with $p\in\partial\Omega^*$ by Lemma~\ref{partialgeometry}. By Lemma~\ref{partialgeometry}, Theorem~\ref{estimate} applies to \eqref{augmented}, proving \eqref{partialgradient}.

\end{proof}

The H\"older estimate for $Dv$ also implies that the relative interior of the segment is disjoint from $\overline{\Omega^*}$.

\begin{lemma}\label{partialmiddle}
If $x\in\mathcal F$ and $\partial u(x)=[x,p]$, then
\begin{equation*}
[x,p]\cap\overline{\Omega^*}=\{p\}.
\end{equation*}
\end{lemma}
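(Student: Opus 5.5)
We adapt the proof of Lemma~\ref{middle}, now with only one extreme point of the segment lying in the target. By Lemma~\ref{partialestimate}, $p\in\partial\Omega^*\subset\overline{\Omega^*}$. The endpoint $x$ lies in $\Omega$, which is separated from $\Omega^*$ by a hyperplane, so $x\notin\overline{\Omega^*}$. It therefore remains to rule out a point $p_3\in(x,p)\cap\overline{\Omega^*}$.

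Suppose such a $p_3$ exists. Set $\tilde v(y)=v(y)-x\cdot y+u(x)$, so that $\tilde v\ge0$ and $\tilde v=0$ on $[x,p]$. We need a growth bound near the whole segment; the $C^{1,1-\eta}$ estimate \eqref{partialgradient} only gives one at $p$. Integrating \eqref{partialgradient} from $p$ gives
\[
0\le\tilde v(p+Y)\le C|Y|^{2-\eta}\qquad(|Y|<r).
\]
At the other endpoint $x$ we have no such estimate. Instead we use Lemma~\ref{partialgeometry}, which gives $\partial v(x)=\{x\}$ and $v\in C^1$ near $x$. Indeed $x\in\partial u(x)$ by \eqref{active}, so these apply to $p=x$. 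Hence
\[
\tilde v(x+Y)=o(|Y|).
\]

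Next apply convexity of $\tilde v$ on the convex hull of $B_\rho(p)\cup B_{\rho'}(x)$. Choose $\ell>0$ with $p_3+[-2\ell,2\ell]\tau\subset(x,p)$, where $\tau=(p-x)/|p-x|$. Write a point $p_3+s\tau+Y$, with $Y\perp\tau$, as a convex combination $\lambda(x+Y_1)+(1-\lambda)(p+Y_2)$ with $\lambda\in(0,1)$ bounded away from $0$ and $1$. Choosing $Y_1=0$ and $Y_2=Y/(1-\lambda)$ gives
\[
\tilde v(p_3+s\tau+Y)\le(1-\lambda)\,C|Y/(1-\lambda)|^{2-\eta}\le C'|Y|^{2-\eta},
\]
since $\tilde v(x)=0$. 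So the tube bound \eqref{tube} holds near $p_3$ using only the estimate at $p$, and the estimate at $x$ is not needed at all.

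With \eqref{tube} in hand, the rest repeats Lemma~\ref{middle}. Fix $\eta<1/n$. On $\mathcal T_{\rho/2}$, the subgradients of $\tilde v$ have tangential component $O(\ell^{-1}\rho^{2-\eta})$ and normal component $O(\rho^{1-\eta})$. Hence
\[
\mu_v(\mathcal T_{\rho/2})\le C\rho^{n+1-n\eta}.
\]
Since $p_3\in\overline{\Omega^*}$, the cone representation \eqref{cones} at $p_3$ gives
\[
|\Omega^*\cap B_{\rho/4}(p_3)|\ge c\rho^n.
\]
We need a lower bound on $\mu_v$ to match; here the relevant measure is $\mu_v$ for \eqref{augmented}, and \eqref{measure} gives $\mu_v\ge\chi_{\Omega_m^*}\,dy$. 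The main obstacle is that $\Omega_m^*=V\cup\operatorname{int}(\Omega\setminus U)$ contains $V$, not all of $\Omega^*$, so we must show that $V$ has positive density at $p_3$.

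To get this, we first try \eqref{balls}. The pair $(x,p)$ is a limit of active pairs $(x',Du(x'))$, so the inclusion extends to it and gives
\[
B_{|x-p|}(x)\cap\Omega^*\subset V.
\]
Since $|p_3-x|<|p-x|$, a neighborhood of $p_3$ intersected with $\Omega^*$ lies in $V$. Then $\mu_v\ge c\rho^n$ on $B_{\rho/4}(p_3)\subset\mathcal T_{\rho/2}$. This contradicts the upper bound $C\rho^{n+1-n\eta}$ as $\rho\to0$, because $n\eta<1$.
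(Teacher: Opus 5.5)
Your proof is correct and is essentially the paper's argument: the interpolation $\tilde v\bigl(\lambda x+(1-\lambda)(p+Y/(1-\lambda))\bigr)\le(1-\lambda)\tilde v\bigl(p+Y/(1-\lambda)\bigr)$ is exactly \eqref{partialgrowth}, and the contradiction between $\mu_v(\mathcal T_\rho)\le C\rho^{n+1-n\eta}$ and the lower bound $c\rho^n$ from \eqref{balls} and \eqref{cones} is the same. The detour through $\partial v(x)=\{x\}$ is unnecessary, as you note yourself, while your explicit check via the separating hyperplane that $x\notin\overline{\Omega^*}$ supplies a point the paper leaves implicit.
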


\begin{proof}
Subtract the supporting affine function from $v$, so that $v\ge0$ and $v=0$ on $[x,p]$. Integrating \eqref{partialgradient}, with $0<\eta<1/n$, yields $v(p+X)\le C|X|^{2-\eta}$. By convexity,
\begin{equation}\label{partialgrowth}
v((1-t)x+tp+X)
\le t\,v(p+X/t)\le C_t|X|^{2-\eta},\qquad 0<t<1,
\end{equation}
where $C_t$ is uniform on compact subintervals of $(0,1)$.

Suppose $y\in(x,p)\cap\overline{\Omega^*}$. Set $e=(p-x)/|p-x|$ and choose $\ell>0$ so that $y+[-2\ell,2\ell]e\subset(x,p)$. On a cylinder about this segment, \eqref{partialgrowth} bounds $v$ by $C|X|^{2-\eta}$, where $X\perp e$. Let $\mathcal T_\rho$ be a smaller cylinder of fixed axial length and transverse radius $\rho$. By convexity,
\[
|\xi\cdot e|\le C\rho^{2-\eta},\qquad
|\xi-(\xi\cdot e)e|\le C\rho^{1-\eta}
\quad(\xi\in\partial v(\mathcal T_\rho)).
\]
Hence
\begin{equation}\label{partialtube}
\mu_v(\mathcal T_\rho)\le C\rho^{n+1-n\eta}.
\end{equation}
On the other hand, $y\in B_{|x-p|}(x)$, so \eqref{balls} implies $V=\Omega^*$ near $y$. By \eqref{cones},
\[
\mu_v(\mathcal T_\rho)\ge|V\cap B_{c\rho}(y)|
=|\Omega^*\cap B_{c\rho}(y)|\ge c_y\rho^n
\]
for small $\rho$. This contradicts \eqref{partialtube}, since $n\eta<1$.
\end{proof}

We next consider segment subdifferentials with an endpoint in the relative interior of a facet of $\Omega^*$.

\begin{proposition}\label{partialfacet}
Suppose $x\in\mathcal F$, $\partial u(x)=[x,p]$, and $p$ lies in the relative interior of a boundary facet of $\Omega^*$. Then $\mathcal F$ is a smooth hypersurface near $x$.
\end{proposition}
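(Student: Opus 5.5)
The plan is to mirror the proof of Proposition~\ref{regularity}, with the inactive region playing the role of the second target component. Near $x$ the transport splits into two pieces. One piece is the identity map $y=x$ on the inactive side, where $u=\tfrac12|x|^2-c_m$. The other is the transport into a half-ball $W=\overline{\Omega^*\cap B_\rho(p)}$. By Lemma~\ref{partialgeometry} and \eqref{partialequation}, $B_\rho(p)\cap\Omega^*\subset V$ and $\mu_v=\chi_{\Omega^*}\,dy$ in $B_\rho(p)$. By Lemma~\ref{localization}, centred sections of $v$ localize at $p$, since $p\in\ext\partial u(x)$ and $\partial v(p)=\{x\}$. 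Set $u_1(z)=\sup_{y\in W}\{z\cdot y-v(y)\}$ and $u_2(z)=\tfrac12|z|^2-c_m$. As in Proposition~\ref{regularity}, I would apply \cite[Lemma~7.5]{CM}, the quantitative convexity argument \cite[Theorem~7.13]{CM} and \cite[Remark~7.10]{CM} to the flat target $W$ to get $u_1\in C^{1,\beta}$ near $x$.

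Next, I would show that near $x$
\[
u=\max\{u_1,u_2\},\qquad \mathcal F=\{u_1=u_2\},\qquad \partial u(z)=[z,Du_1(z)]\quad(z\in\mathcal F),
\]
with $Du_1(\mathcal F)$ contained in the open facet through $p$. This should follow from the local separation argument of \cite[Proposition~3.1]{CLL}. The inputs are \eqref{activeextremes}, upper semicontinuity of $\partial u$ at $x$ (so that nearby active extreme subgradients lie in $B_\rho(p)$), and Lemma~\ref{partialmiddle}, which keeps the relative interiors of nearby segments away from $\overline{\Omega^*}$. Since $Du_1\ne Du_2$ at $x$, the implicit function theorem makes $\mathcal F=\{u_1=u_2\}$ a $C^{1,\beta}$ hypersurface near $x$, and its normal in $U$ is $\nu=(Du_1-Du_2)/|Du_1-Du_2|$. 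The hyperplane separation of $\Omega$ and $\Omega^*$ could serve as an alternative route to the graph property via \cite[Proposition~5.2]{CM}.

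For the obliqueness condition, let $\nu^*$ be the inner normal of the facet. At $x$,
\[
\nu(x)\cdot\nu^*=\frac{(p-x)\cdot\nu^*}{|p-x|}>0 .
\]
This is strict because Lemma~\ref{partialmiddle} says the segment $[x,p)$ avoids $\overline{\Omega^*}$ near $p$, so $x-p$ points strictly out of the half-space. We now have a one-sided transport problem: $Du_1$ maps $U$ near $x$, which has a $C^{1,\beta}$ boundary piece $\mathcal F$, onto the flat half-ball, with the oblique condition. Proposition~\ref{boundary}(i) then gives $u_1\in C^{2,\alpha}$ up to $\mathcal F$. This upgrades $\mathcal F$ to $C^{2,\alpha}$, since $\nu$ involves only $Du_1$ and $Du_2=z$. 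Part (ii) and the boundary Schauder bootstrap of \cite[Remark~4.4]{CLW3} then iterate to smoothness.

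I expect the main obstacle to be the $C^{1,\beta}$ step for $u_1$ together with the separation identity $u=\max\{u_1,u_2\}$. This is where the inactive side replaces a genuine second target component. The difficulty is checking that no other active subgradients (from other parts of $\partial V$) arise near $x$. I would handle it using \eqref{balls}, which forces $V=\Omega^*$ near points of the open segment, and compactness of $\partial u$ near $x$.
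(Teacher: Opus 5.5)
Your plan matches the paper's proof step for step. The steps are: the half-ball cap $u_1$; $C^{1,\beta}$ regularity from Caffarelli--McCann; the splitting $u=\max\{u_1,\tfrac12|X|^2-c_m\}$ with $Du_1(\mathcal F)$ in the facet; the implicit function theorem with normal $(Du_1-X)/|Du_1-X|$; obliqueness from Lemma~\ref{partialmiddle}; and finally Proposition~\ref{boundary} with the Schauder bootstrap.

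One correction is needed in the last step. Proposition~\ref{boundary}(i) gives only $C^{1,1-\varepsilon}$, not $C^{2,\alpha}$. Its role is to upgrade $\mathcal F$ to $C^{1,1-\varepsilon}$ through the normal formula, because the Caffarelli--McCann exponent $\beta$ is not known to be close to $1$. Only after that upgrade does part (ii) apply, giving $C^{2,\alpha}$ and starting the bootstrap.
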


\begin{proof}
By Lemmas~\ref{partialgeometry} and~\ref{partialmiddle}, choose $\rho>0$ such that $V\cap B_\rho(p)=\Omega^*\cap B_\rho(p)$ is a half-ball. Upper semicontinuity implies that all active gradients near $x$ lie in $B_{\rho/2}(p)$. Define
\[
u_1(X)=\sup_{y\in\overline{\Omega^*\cap B_\rho(p)}}
\{X\cdot y-v(y)\}.
\]
Lemma~\ref{localization}, \cite[Lemma~7.6, proof of Theorem~7.13 and Remark~7.10]{CM}, and the local duality argument of \cite[proof of Theorem~1.2]{CLL} yield, in a neighborhood of $x$,
\[
u_1\in C^{1,\beta},\qquad
\mu_{u_1}=\chi_U\,dX,\qquad
u=\max\{u_1,\tfrac12|X|^2-c_m\},
\]
for some $\beta\in(0,1)$, with $Du_1(\mathcal F\cap B_r(x))$ contained in the target facet for some sufficiently small $r>0$.

Since $Du_1(x)=p\ne x$, the implicit function theorem gives $\mathcal F\in C^{1,\beta}$ near $x$, with inner normal
\[
\nu_U(X)=\frac{Du_1(X)-X}{|Du_1(X)-X|}.
\]
Let $\nu^*$ be the inner normal of the target facet. Lemma~\ref{partialmiddle} implies
\[
\nu_U(x)\cdot\nu^*
=\frac{(p-x)\cdot\nu^*}{|p-x|}>0.
\]
Thus Proposition~\ref{boundary}(i), together with the normal formula, yields $C^{1,1-\varepsilon}$ regularity of $\mathcal F$. Part~(ii) and the boundary Schauder bootstrap in \cite[Remark~4.4]{CLW3} then give smoothness.
\end{proof}

\begin{proof}[Proof of Theorem~\ref{partial}]
Let $R_{\mathcal F}$ be the set of points covered by Proposition~\ref{partialfacet}. Its local branch description shows that $R_{\mathcal F}$ is relatively open in $\mathcal F$, so $E_{\mathcal F}:=\mathcal F\setminus R_{\mathcal F}$ is relatively closed.

By the stratification theorem~\cite[Theorem~4.1]{AAC},
\[
\dim_{\mathcal H}
\{x\in\mathcal F:\dim\partial u(x)\ge2\}\le n-2.
\]
Every other point of $E_{\mathcal F}$ has $\partial u(x)=[x,p]$, where $p$ belongs to a target face of dimension at most $n-2$. At each such point $Dv(p)=x$, and \eqref{partialgradient} holds for every $\eta\in(0,1)$. Applying Lemma~\ref{dimension} on each of the finitely many target faces, we find that these source points also have Hausdorff dimension at most $n-2$. This proves the theorem.
\end{proof}
\appendix
\section{Regularity at a flat target facet}\label{flat}

Let $\Omega,\Omega^*\subset\R^n$ be local source and target domains, and let $u,v$ be dual convex potentials with $Du(x_0)=p_0$ and $Dv(p_0)=x_0$. Suppose $\nu_\Omega(x_0)\cdot\nu_{\Omega^*}(p_0)>0.$ After translations, dual affine changes of coordinates, and subtraction of supporting planes, we may assume
\[
x_0=p_0=0,\qquad u(0)=v(0)=0,\qquad Du(0)=Dv(0)=0,
\qquad \nu_\Omega(0)=\nu_{\Omega^*}(0)=e_n.
\]
Suppose that, near $0$,
\[
\Omega^*=\{y_n>0\},\qquad
\Omega=\{x_n>\rho(x')\},
\]
where $\rho\in C^{1,\beta}$ for some $\beta\in(0,1)$ and $\rho(0)=D\rho(0)=0$.

Assume that $u$ is the Legendre transform of $v$ restricted to a closed local target cap, that $u\in C^{1,\beta}(\overline\Omega)$ and $v\in C^1$ locally, and that
\[
\mu_u=\chi_\Omega\,dx,\qquad
\mu_v=\chi_{\Omega^*}\,dy,\qquad
Du(\partial\Omega)\subset\partial\Omega^*
\]
in the corresponding neighborhoods. Suppose further that
\[
S_h^c[v](y)\subset B_r(0)
\qquad
(y\in\overline{\Omega^*}\cap B_{\delta_0}(0),\
0<h\le h_0)
\]
for some $r,\delta_0,h_0>0$.

As in \cite[Proposition~6.1]{CLL}, the arguments in \cite[Sections~3 and~4]{CLW3} yield the following result.

\begin{proposition}\label{boundary}
Under the above assumptions:
\begin{enumerate}
\item[(i)] For every $\varepsilon\in(0,1)$,
\[
u(x)\le C_\varepsilon|x|^{2-\varepsilon}
\qquad(x\in\overline\Omega\text{ near }0),
\]
and $u$ satisfies boundary $C^{1,1-\varepsilon}$ estimates.

\item[(ii)] If $\beta$ is sufficiently close to $1$, then
\[
u\in C^{2,\alpha}(B_{\bar r}(0)\cap\overline\Omega)
\]
for some $\alpha\in(0,1)$ and $\bar r>0$.
\end{enumerate}
\end{proposition}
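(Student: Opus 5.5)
We follow the scheme of \cite[Sections~3 and~4]{CLW3}, adapted to a flat target as in \cite[Proposition~6.1]{CLL}. The proof is organized around centred sections of $v$ at points of $\partial\Omega^*$ near $0$. These sections stay inside $B_r(0)$ by hypothesis, so they see only the half-space $\{y_n>0\}$ and the local equation $\mu_v=\chi_{\Omega^*}\,dy$.

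\emph{Step 1: sections become balanced.} For a boundary point $y$ and small $h$, the flatness of $\partial\Omega^*$ and the obliqueness $\nu_\Omega\cdot\nu_{\Omega^*}>0$ should show that $S_h^c[v](y)\cap\{y_n>0\}$ carries a fixed fraction of the mass of $S_h^c[v](y)$. We then invoke \eqref{lowerdepth} and \eqref{upperdepth} of Lemma~\ref{depth}, which apply because $Dv(y)$ is an interior point of $\Omega$ when $Dv(y)\notin\partial\Omega$, and by approximation otherwise. These give the two-sided height comparison $(\sup(\ell_h-v))^n\approx h^n\approx|S_h^c\cap\Omega^*|\,|S_h^c|$, and hence the engulfing and doubling properties of sections up to the boundary.

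\emph{Step 2: $C^{1,1-\varepsilon}$ via blow-up.} We blow up at the boundary point $0$. Normalizing $S_h^c[v](0)$ by \eqref{normalization}, the rescaled potentials converge along subsequences to a solution of $\det D^2w=\chi_{\{y_n>0\}}$ on a half-space. The dual domain of this limit is also a half-space, because $\rho\in C^{1,\beta}$ flattens under the rescaling. The boundary condition $Dw(\partial)\subset\partial$ persists in the limit. We then use a Liouville-type classification: the limit must be a quadratic polynomial, obtained by even reflection across $\{y_n=0\}$ and Pogorelov/Caffarelli interior theory. From this we get
\[
\operatorname{diam}S_{\theta h}\le\theta^{1/2-\varepsilon}\operatorname{diam}S_h
\]
for a fixed $\theta$ and all small $h$, uniformly over nearby boundary points. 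Iterating this gives $u(x)\le C_\varepsilon|x|^{2-\varepsilon}$. Combining the boundary estimate with interior Caffarelli $C^{1,\alpha}$ theory, as in \cite{CLW3}, yields the boundary $C^{1,1-\varepsilon}$ estimates of (i).

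\emph{Step 3: $C^{2,\alpha}$.} For (ii) we use the perturbation method of \cite[Section~4]{CLW3}. At each scale we compare $u$ with the solution of the half-space problem having the same boundary data. The error at scale $h$ is controlled by the $C^{1,\beta}$ oscillation of $\rho$, which is of order $h^{\beta/2}$ after normalization, together with the $(1-\varepsilon)$ gain from (i). When $\beta$ is close to $1$ these errors sum geometrically, and a Campanato-type iteration gives $D^2u\in C^\alpha$ up to the boundary.

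The main obstacle is the Liouville step in Step~2, namely the uniqueness of the limit on the half-space with the second boundary condition. Obliqueness is the essential input there. We would first try even reflection combined with the uniform density estimate from Step~1. If this does not close, we would fall back on the direct comparison arguments of \cite[Section~3]{CLW3}.
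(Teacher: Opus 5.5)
Your Step~1 fails where the real difficulty lies. At a point $y\in\partial\Omega^*$ near $0$, the boundary correspondence ($Du(\partial\Omega)\subset\partial\Omega^*$ together with duality) gives $Dv(y)\in\partial\Omega$. So the hypothesis $B_\sigma(Dv(y))\subset\Omega$ of \eqref{upperdepth} never holds there, and approximating by interior points sends the constant $C(n,\diam\Omega/\sigma)$ to infinity. The two-sided comparison $h^n\approx|S_h^c|^2$ and doubling up to the boundary therefore do not follow. Moreover, the density you set out to prove is on the wrong side. A centred section at a point of the flat boundary $\{y_n=0\}$ has its barycentre on that hyperplane, so by Gr\"unbaum's inequality $\{y_n\ge0\}$ carries at least $(n/(n+1))^n$ of its volume, with no use of obliqueness. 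The non-convex domain is the source $\Omega=\{x_n>\rho(x')\}$. What is actually needed is the uniform density estimate $|S_h^c[u](0)\cap\Omega|\ge\delta|S_h^c[u](0)|$. The paper obtains it from the normal estimate \cite[Lemma~3.2]{CLW3} and the argument of \cite[Lemma~5.2]{CL3}, and this is where obliqueness and $\rho\in C^{1,\beta}$ enter.

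The same missing control breaks Step~2. That $\rho$ ``flattens under the rescaling'' is not automatic for anisotropic normalizations. If the source-side region has tangential width $d'$ and normal width $d_n$, the normalized graph has height of order $(d')^{1+\beta}/d_n$. This tends to zero only if you already control the eccentricity of the sections, which is part of what you are trying to prove. Even the a priori bounds $d_n\gtrsim h^{1/(1+\beta)}$ from $u\in C^{1,\beta}$ and $d'\lesssim h^{1/2}$ give this only when $(1+\beta)^2>2$, whereas (i) is claimed for every $\beta$.

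The paper supplies this control through an ingredient absent from your plan. Since $\chi_{\{y_n>0\}}$ is invariant under tangential translations, Caffarelli's directional Pogorelov estimate \cite[Corollary~1.1]{C96} on a fixed section gives $v(y',0)\le C|y'|^2$ and, dually, $u(x)\ge c|x'|^2$. Only after this and the density estimate do the normal bound $u(te_n)\le C_\varepsilon|t|^{2-\varepsilon}$ and the tangential iteration \cite[Lemma~3.4 and Corollary~3.2]{CLW3} go through.

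By contrast, the Liouville step you flag is harmless. Once a limit maps a half-space onto a half-space, a dual affine change gives $w_n=0$ on $\{y_n=0\}$. The even reflection is then convex because $w_n\ge0$ for $y_n>0$, and J\"orgens--Calabi--Pogorelov applies. Without the tangential and density estimates, however, you never obtain a compact class of limits to which it can be applied.
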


\begin{proof}
Fix a centered section $S_{h_0}^c[v](0)\Subset B_r(0)$. Its Monge--Amp\`ere density is $\chi_{\{y_n>0\}}$. The directional Pogorelov estimate \cite[Corollary~1.1]{C96}, applied on this section, yields
\[
v(y',0)\le C|y'|^2,\qquad u(x)\ge c|x'|^2\quad(x\in\overline\Omega\text{ near }0),
\]
where the second inequality follows by duality. The normal estimate in \cite[Lemma~3.2]{CLW3} and the uniform density argument for a $C^{1,\beta}$ source graph in \cite[Lemma~5.2]{CL3} give
\[
u(te_n)\le C_\varepsilon|t|^{2-\varepsilon},\qquad |S_h^c[u](0)\cap\Omega|\ge\delta|S_h^c[u](0)|
\]
for small $|t|$ and $h$, with $\delta>0$ independent of $h$. The tangential iteration \cite[Lemma~3.4]{CLW3} and \cite[Corollary~3.2]{CLW3} prove (i). For $\beta$ sufficiently close to $1$, the comparison estimate \cite[Lemma~4.1]{CLW3} and the iteration in \cite[Section~4]{CLW3} prove (ii), with the target boundary function identically zero. These local arguments apply in $\R^n$.
\end{proof}

\section*{Acknowledgments}
This paper grew out of a research project initiated by the authors in 2022. The two conjectures were formulated entirely by the first author and stated in \cite{CLL}. The authors developed the proof strategy, including the use of smooth approximations of the inverse potential to avoid decomposing the singular part of the Hessian measure. ChatGPT (OpenAI) was used primarily to assist with some tedious computations involved in deriving the monotonicity formula in Section~\ref{secmon}.

\end{document}